\DeclareMathAlphabet{\pazocal}{OMS}{zplm}{m}{n}
\newcommand{\R}{\mathbb{R}}
\newcommand{\J}{\mathbb{J}}
\newcommand{\F}{\pazocal{F}}
\newcommand{\G}{\pazocal{G}}
\newcommand{\U}{\pazocal{U}}
\newcommand{\K}{\pazocal{K}}
\newcommand{\V}{\pazocal{V}}
\newcommand{\M}{\pazocal{M}}
\newcommand{\T}{\mathcal{T}}
\newcommand{\Acal}{\mathcal{A}}
\newcommand{\Mcal}{\mathcal{M}}
\newcommand{\Lcal}{\mathcal{L}}
\newcommand{\Pcal}{\mathcal{P}}
\newcommand{\Ccal}{\mathcal{C}}
\newcommand{\Scal}{\mathcal{S}}
\newcommand{\Hcal}{\mathcal{H}}
\newcommand{\Ncal}{\mathcal{N}}
\newcommand{\Spazo}{\pazocal{S}}
\newcommand{\Hpazo}{\pazocal{H}}
\newcommand{\Ppazo}{\pazocal{P}}
\newcommand{\Cpazo}{\pazocal{C}}
\newcommand{\D}{\textnormal{D}}
\newcommand{\supp}{\textnormal{supp}}
\newcommand{\Lip}{\textnormal{Lip}}
\newcommand{\MP}{\textnormal{MP}}
\newcommand{\Id}{\textnormal{Id}}
\newcommand{\loc}{\textnormal{loc}}
\newcommand{\INTDom}[3]{\int_{#2} #1 \textnormal{d} #3}
\newcommand{\INTSeg}[4]{\int_{#3}^{#4} #1 \textnormal{d} #2}
\newcommand{\NormL}[3]{\parallel \hspace{-0.1cm} #1 \hspace{-0.1cm} \parallel _ {L^{#2}(#3)}}
\newcommand{\NormC}[3]{\left\| #1  \right\| _ {C^{#2}(#3)}}
\newcommand{\Norm}[1]{\parallel \hspace{-0.1cm} #1 \hspace{-0.1cm} \parallel}
\newcommand{\Bgamma}{\boldsymbol{\gamma}}
\newcommand{\Bpartial}{\boldsymbol{\partial}}
\newcommand{\derv}[3]{\frac{\textnormal{d}^{#3} #1}{\textnormal{d} #2^{#3}}}
\newcommand{\Bmu}{\mbox{$\raisebox{-0.59ex}
  {$l$}\hspace{-0.18em}\mu\hspace{-0.88em}\raisebox{-0.98ex}{\scalebox{2}
  {$\color{white}.$}}\hspace{-0.416em}\raisebox{+0.88ex}
  {$\color{white}.$}\hspace{0.46em}$}{}}
\newcommand{\BGamma}{\mbox{$ \textnormal{l} \hspace{-0.1em}\Gamma\hspace{-0.88em}\raisebox{-0.98ex}{\scalebox{2}
  {$\color{white}.$}}\hspace{0.46em}$}{}}
\newtheorem{rmk}{Remark}
\newtheorem{lem}{Lemma}
\newtheorem{Def}{Definition}
\newtheorem{thm}{Theorem}
\newtheorem{prop}{Proposition}
\numberwithin{equation}{section}
\numberwithin{lem}{section}
\numberwithin{thm}{section}
\numberwithin{prop}{section}
\numberwithin{Def}{section}
\renewenvironment{framed}[1][\hsize]
   {\MakeFramed{\hsize#1\advance\hsize-\width \FrameRestore}}%
   {\endMakeFramed}
\numberwithin{equation}{section}
\numberwithin{lem}{section}
\numberwithin{thm}{section}
\numberwithin{prop}{section}
\numberwithin{Def}{section}
\title{A Pontryagin Maximum Principle in Wasserstein Spaces for Constrained Optimal Control Problems}
\author{Beno\^it Bonnet\footnote{Aix Marseille Universit\'e, CNRS, ENSAM, Universit\'e de Toulon, LIS, Marseille, France. \textit{benoit.bonnet@lis-lab.fr}}} 
\date{\today}
\begin{document}

\maketitle
\begin{abstract} 
In this paper, we prove a Pontryagin Maximum Principle for constrained optimal control problems in the Wasserstein space of probability measures. The dynamics is described by a transport equation with non-local velocities which are affine in the control, and is subject to end-point and running state constraints. Building on our previous work, we combine the classical method of needle-variations from geometric control theory and the metric differential structure of the Wasserstein spaces to obtain a maximum principle formulated in the so-called Gamkrelidze form.
\end{abstract}

\section{Introduction}

Transport equations with non-local velocities have drawn a great amount of attention from several scientific communities for almost a century. They were first introduced in statistical physics to describe averaged Coulomb interactions within large assemblies of particles (see e.g. \cite{vlasov}), and are still to this day a widely studied topic in mathematical physics. More recently, a growing interest in the mathematical modelling of multi-agent systems has brought to light a whole new panel of problems in which these equations play a central role. Starting from the seminal paper of Cucker and Smale \cite{CS2} dealing with emergent behaviour in animal flocks, a large literature has been devoted to the fine mathematical analysis of kinetic cooperative systems, i.e. systems described by non-local dynamics with attractive velocities, see e.g. \cite{HK,lifebelt,bellomotosin,albicristiani}. Besides, several prominent papers aimed at describing the emergence of patterns which were initially discovered for systems of ODEs in the context of kinetic models described by continuity equations \cite{Carrillo2010,HaLiu}. Simultaneously, Lasry and Lions laid in \cite{Lasry2007} the foundations of the theory of mean-field games, which is today one of the most active communities working on variational problems involving continuity equations, see e.g. \cite{Cardaliaguet2012,CDLL} and references therein.

Later on, the focus shifted partly to include control-theoretic problems such as reachability analysis, optimal control, or explicit design of sparse control strategies. For these purposes, the vast majority of the existing contributions have taken advantage of the recent developments in the theory of optimal transport. We refer the reader to \cite{villani1,OTAM} for a comprehensive introduction to this ever-expanding topic. In particular, the emergence of powerful tools of analysis in the so-called Wasserstein spaces has allowed for the establishment of a general existence theory for non-local transport equations (see e.g. \cite{AGS,AmbrosioGangbo}), which incorporates natural Lipschitz and metric estimates in the smooth cases (see \cite{Pedestrian}). 

Apart from a few controllability results as in \cite{michel}, most of the attention of the community has been devoted to optimal control problems in Wasserstein spaces. The existence of optimal solutions has been investigated with various degrees of generality in \cite{achdou2,achdou1,FPR,MFOC,FLOS,Pogodaev2016}, mostly by means of $\Gamma$-convergence arguments. Besides, a few papers have been dealing with numerical methods either in the presence of diffusion terms, which considerably simplify the convergence analysis of the corresponding schemes (see e.g. \cite{RobotSwarms}), or in the purely metric setting \cite{AlbiPareschiZanella,Burger2019,Pogodaev2017}.

The derivation of Hamilton-Jacobi and Pontryagin optimality conditions has been an active topic in the community of Wasserstein optimal control in the recent years. Starting from the seminal paper \cite{HJBWasserstein} on Hamilton-Jacobi equations in the Wasserstein space, several  contributions such as \cite{CavagnariMP,Cavagnari2018} have been aiming at refining a dynamic-programming principle for mean-field optimal control problems. Pontryagin-type optimality conditions, on the other hand, have received less interest. The first result derived in \cite{MFPMP} focuses on a multi-scale ODE-PDE system in which the control only acts on the ODE part. In this setting, the Pontryagin Maximum Principle (``PMP'' for short) is derived by combining $\Gamma$-convergence and mean-field limit arguments. Another approach, introduced in our previous work \cite{PMPWass}, studies the infinite-dimensional problem by means of the classical technique of needle-variations (see e.g. \cite{AgrachevSachkov,BressanPiccoli}) and makes an extensive use of the theory of Wasserstein subdifferential calculus formalized in \cite{AGS}. The corresponding maximum principle is formulated as a Hamiltonian flow in the space of measures in the spirit of \cite{AmbrosioGangbo}, and is in a sense the most natural generalization to be expected of the usual finite-dimensional Pontryagin-type optimality conditions. We would also like to mention that in \cite{Burger2019,Pogodaev2016}, the authors derived first-order necessary optimality conditions for special classes of optimal control problems on continuity equations, via methods which are quite distinct from that which we already sketched.

It is worth noticing that optimal control problems in Wasserstein spaces bear a lot of similarities with mean-field games. It was highlighted as early as \cite{Lasry2007} and further detailed e.g. in \cite{Carmona2013} that in the class of so-called potential mean-field games, the self-organization of an ensemble of agents could be equivalently reformulated as an optimal control problem in Wasserstein spaces involving adequately modified functionals. In particular in \cite{Carmona2015}, a PMP was derived for controlled McKean-Vlasov dynamics describing such an optimal control problem from a probabilistic point of view. The analysis therein is carried out by leveraging the formalism of Lions derivatives in Wasserstein spaces (see e.g. \cite{Cardaliaguet2012}), which are one of the possible equivalent ways of looking at derivatives in the metric space of probability measures.

In this paper, we further the line of research initiated in \cite{PMPWass} by extending our previous result, consisting in a Pontryagin Maximum Principle in Wasserstein spaces, to the setting of constrained optimal control problems. Such problems can be written in the following general form 
\begin{equation*}
(\Ppazo) ~~ \left\{
\begin{aligned}
\underset{u \in \U}{\text{min}} & \left[ \INTSeg{L(t,\mu(t),u(t))}{t}{0}{T} + \varphi(\mu(T)) \right], \\
\text{s.t.} 
& \left\{ 
\begin{aligned}
& \partial_t \mu(t) + \nabla \cdot ((v[\mu(t)](t,\cdot) + u(t,\cdot)) \mu(t)) = 0, \\
& \mu(0) = \mu^0 \in \Pcal_c(\R^d),
\end{aligned}
\right. \\
\text{and}
& \left\{
\begin{aligned}
\Psi^I(\mu(T)) & \leq 0,~ \Psi^E(\mu(T)) = 0 ,  \\
\Lambda(t,\mu(t)) & \leq 0 ~~ \text{for all $t \in [0,T]$.}
\end{aligned}
\right.
\end{aligned}
\right.
\end{equation*}
Here, the functions $(t,\mu,\omega) \mapsto L(t,\mu,\omega)$ and $\mu \mapsto \varphi(\mu)$ describe running and final costs, while the maps $(t,\mu) \mapsto \Lambda(t,\mu)$ and $\mu \mapsto \Psi^I(\mu),\Psi^E(\mu)$ are running and end-point constraints respectively. The velocity field $(t,x,\mu) \mapsto v[\mu](t,x)$ is a general non-local drift, which can be given e.g. in the form of a convolution (see \cite{CS2,ControlKCS,FPR}). The control $(t,x) \mapsto u(t,x)$ is a vector-field which depends on both time and space, as customary in distributed control of partial differential equations (see e.g. \cite{Troltzsch}).

The methodology that we follow relies on the technique of \textit{packages of needle-variations}, combined with a \textit{Lagrange multiplier rule}. In essence, this method allows to recover the maximum principle from a family of finite-dimensional first-order optimality conditions by means of the introduction of a suitable costate. Even though classical in the unconstrained case, this direct approach does require some care to be translated to constrained problem. Indeed, the presence of constraints induces an unwanted dependency between the Lagrange multipliers and the needle-parameters. This extra difficulty can be circumvented by considering $N$-dimensional perturbations of the optimal trajectory instead of a single one, and by performing a limiting procedure as $N$ goes to infinity. Originally introduced in \cite{Arutyunov2004} for smooth optimal control problems with end-point constraints, this approach was extended in \cite{NonsmoothPMPNeedle2} to the case of non-smooth and state-constrained problems. When trying to further adapt this method to the setting of Wasserstein spaces, one is faced with an extra structural difficulty. In the classical statement of the maximum principle, the presence of state constraints implies a mere $BV$ regularity in time for the covectors. However, a deep result of optimal transport theory states that solutions of continuity equations in Wasserstein spaces coincide exactly with absolutely continuous curves (see e.g. \cite[Theorem 8.3.1]{AGS}). Whence, in order to write a well-defined Wasserstein Hamiltonian flow in the spirit of \cite{AmbrosioGangbo,PMPWass}, we choose to formulate a maximum principle in the so-called Gamkrelidze form (see e.g. \cite{Arutyunov2011}), which allows to recover a stronger absolute continuity in time of the costates at the price of an extra regularity assumption on the state constraints.
 
This article is structured as follows. In Section \ref{section:Recall}, we recall general results of analysis in measure spaces along with elements of subdifferential calculus in Wasserstein spaces and existence results for continuity equations. We also introduce several notions of non-smooth analysis, including a general Lagrange multiplier rule formulated in terms of \textit{Michel-Penot} subdifferentials. In Section \ref{section:PMP_General}, we state and prove our main result, that is Theorem \ref{thm:PMP_General}. The argument is split into four steps which loosely follow the methodology already introduced in \cite{PMPWass}. We exhibit in Appendix \ref{appendix:Examples} a series of examples of functionals satisfying the structural assumptions \textbf{(H)} of Theorem \ref{thm:PMP_General}, and we provide in Appendix \ref{appendix:ConstraintPenalization} the analytical expression of the Wasserstein gradient of a functional involved in the statement of Theorem \ref{thm:PMP_General}.


\section{Preliminary results} 
\label{section:Recall}

In this section, we recall several notions about analysis in the space of measures,  optimal transport theory, Wasserstein spaces, subdifferential calculus in the space $(\Pcal_2(\R^d),W_2)$,  and continuity equations with non-local velocities.  We also introduce some elementary notions of non-smooth calculus in Banach spaces. For a complete introduction to these topics, see \cite{AGS,villani1,OTAM} and \cite{MichelPenot,Clarke} respectively. 


\subsection{Analysis in measure spaces and the optimal transport problem}

In this section, we introduce some classical notations and results of measure theory, optimal transport and analysis in Wasserstein spaces. We denote by $(\M_+(\R^d),\Norm{\cdot}_{TV})$ the set of real-valued non-negative Borel measures defined over $\R^d$ endowed with the total variation norm, and by $\Lcal^d$ the standard Lebesgue measure on $\R^d$. It is known by Riesz Theorem (see e.g. \cite[Theorem 1.54]{AmbrosioFuscoPallara}) that this space can be identified with the topological dual of the Banach space $(C^0_0(\R^d),\Norm{\cdot}_{C^0})$, which is the completion of the space of continuous and compactly supported functions $C^0_c(\R^d)$ endowed with the $C^0$-norm.

We denote by $\Pcal(\R^d) \subset \M_+(\R^d)$ the set of Borel probability measures, and for $p \geq 1$, we denote by $\Pcal_p(\R^d) \subset\Pcal(\R^d)$ the set of measures with finite $p$-th moment, i.e.
\begin{equation*}
\Pcal_p(\R^d) = \left\{ \mu \in \Pcal(\R^d) ~\text{s.t.}~ \INTDom{|x|^p}{\R^d}{\mu(x)} < +\infty \right\}.
\end{equation*}
The \textit{support} of a Borel measure $\mu \in \M_+(\R^d)$ is defined as the closed set $\supp(\mu) = \{ x \in \R^d ~\text{s.t.}~ \mu(\pazocal{N}) > 0 ~\text{for any neighbourhood $\pazocal{N}$ of $x$}\}$. We denote by $\Pcal_c(\R^d) \subset \Pcal(\R^d)$ the set of probability measures with compact support. 

We say that a sequence $(\mu_n) \subset \Pcal(\R^d)$ of Borel probability measures \textit{converges narrowly} towards $\mu \in \Pcal(\R^d)$ -- denoted by $\mu_n \underset{n \rightarrow +\infty}{\rightharpoonup^*} \mu$ -- provided that
\begin{equation}
\label{eq:Narrow_convergence}
\INTDom{\phi(x)}{\R^d}{\mu_n(x)} ~\underset{n \rightarrow + \infty}{\longrightarrow}~ \INTDom{\phi(x)}{\R^d}{\mu(x)}, 
\end{equation}
for all $\phi \in C^0_b(\R^d)$. Here $C^0_b(\R^d)$ denotes the set of continuous and bounded functions over $\R^d$. Remark that the corresponding \textit{narrow topology} coincides with the restriction to $\Pcal(\R^d)$ of the weak-$^*$ topology in $\M_+(\R^d)$. In the following proposition, we recall a useful convergence property on the measures of Borel sets, usually referred to as the \textit{Portmanteau Theorem} (see e.g. \cite[Proposition 1.62]{AmbrosioFuscoPallara}). 

\begin{prop}[Portmanteau Theorem]
\label{prop:Portmanteau}
Let $(\mu_n) \subset \M_+(\R^d)$ be a sequence of measures converging in the weak-$^*$ topology towards $\mu \in \M_+(\R^d)$. Then for any Borel set $A \subset \R^d$ such that $\mu(\partial A) = 0$, it holds that $\mu_n(A) \underset{n \rightarrow +\infty}{\longrightarrow} \mu(A).$
\end{prop}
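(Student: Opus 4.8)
The plan is to deduce the convergence $\mu_n(A) \to \mu(A)$ from the two classical one-sided estimates — a lower semicontinuity estimate on open sets and an upper semicontinuity estimate on closed sets — and then to exploit the hypothesis $\mu(\partial A) = 0$ to sandwich $\mu(A)$ between the two. Writing $A^{\circ}$ for the interior and $\overline{A}$ for the closure of $A$, the relation $\partial A = \overline{A} \setminus A^{\circ}$ together with $\mu(\partial A) = 0$ yields $\mu(A^{\circ}) = \mu(A) = \mu(\overline{A})$, so it will suffice to control $\liminf_n \mu_n(A^{\circ})$ from below and $\limsup_n \mu_n(\overline{A})$ from above.

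First I would prove that $\liminf_n \mu_n(U) \geq \mu(U)$ for every open set $U \subset \R^d$. Choose an increasing sequence $(\phi_k) \subset C^0_c(\R^d)$ with $0 \leq \phi_k \leq 1$, $\supp(\phi_k) \subset U$, and $\phi_k \uparrow \mathbf{1}_U$ pointwise; such functions exist by exhausting $U$ with compact sets and applying Urysohn's lemma. For fixed $k$, the inequality $\mu_n(U) \geq \INTDom{\phi_k(x)}{\R^d}{\mu_n(x)}$ combined with weak-$*$ convergence gives $\liminf_n \mu_n(U) \geq \INTDom{\phi_k(x)}{\R^d}{\mu(x)}$, and letting $k \to +\infty$ the monotone convergence theorem yields $\liminf_n \mu_n(U) \geq \mu(U)$. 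This step uses only test functions in $C^0_c(\R^d) \subset C^0_0(\R^d)$, hence is a soft consequence of weak-$*$ convergence alone.

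Next I would establish the dual estimate $\limsup_n \mu_n(C) \leq \mu(C)$ for closed sets $C$. In the setting in which the proposition is used — probability measures converging narrowly, equivalently weak-$*$ convergence together with conservation of total mass $\mu_n(\R^d) \to \mu(\R^d)$ — this follows at once by writing $\mu_n(C) = \mu_n(\R^d) - \mu_n(\R^d \setminus C)$, applying the open-set estimate to $\R^d \setminus C$, and passing to the $\limsup$. In the general locally finite case one argues instead by approximating $\mathbf{1}_{\overline{A}}$ from above by functions of $C^0_c(\R^d)$ supported in a fixed large ball; I would isolate this as the single delicate point, since it is precisely here that the escape of mass to infinity must be excluded, and it is where the structural hypotheses of the ambient framework (mass conservation, or relative compactness of the supports) come into play.

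Finally, combining the two inequalities with the inclusions $A^{\circ} \subset A \subset \overline{A}$ gives
\begin{equation*}
\mu(A) = \mu(A^{\circ}) \leq \liminf_{n} \mu_n(A^{\circ}) \leq \liminf_{n} \mu_n(A) \leq \limsup_{n} \mu_n(A) \leq \limsup_{n} \mu_n(\overline{A}) \leq \mu(\overline{A}) = \mu(A),
\end{equation*}
forcing every inequality to be an equality, so that $\mu_n(A) \to \mu(A)$. The main obstacle is the closed-set estimate: unlike the open-set one, it is not automatic from weak-$*$ convergence, and handling the possible loss of mass at infinity is the crux of the argument.
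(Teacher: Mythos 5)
The paper does not prove this proposition; it simply cites \cite[Proposition 1.62]{AmbrosioFuscoPallara}, so there is no in-paper proof to compare against. Your argument is the standard Portmanteau proof and is correct where it applies: a lower bound $\liminf_n \mu_n(U) \geq \mu(U)$ on open sets by monotone approximation from below with $C^0_c$ functions, the dual upper bound $\limsup_n \mu_n(C) \leq \mu(C)$ on closed sets, and the sandwich $\mu(A^{\circ}) = \mu(A) = \mu(\overline{A})$ forced by $\mu(\partial A) = 0$.

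You have also put your finger on a genuine subtlety in the statement itself. As literally written --- with $(\mu_n)$ only converging weakly-$^*$ in $\M_+(\R^d) = (C^0_0(\R^d))^*$ and $A$ an arbitrary Borel set --- the conclusion fails: take $\mu_n$ to be the Dirac mass at the point $(n,0,\dots,0)$, $\mu = 0$, and $A = \{x \in \R^d : x_1 \geq 0\}$. Then $\mu(\partial A) = 0$, yet $\mu_n(A) \equiv 1 \not\to 0 = \mu(A)$. The missing ingredient is exactly the one you isolate in your discussion of the closed-set estimate: one must additionally assume either that $\overline{A}$ is compact (so that $\mathbf{1}_{\overline{A}}$ can be dominated from above by a single $C^0_c$ function), or that the total masses converge, $\mu_n(\R^d) \to \mu(\R^d)$, which restores the complementation argument you give. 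Every application of this proposition in the paper is to measures supported on the compact interval $[0,T]$, where no mass escapes to infinity, so the omission is harmless in context --- but your caution is warranted, and the proposition as stated is slightly too general.
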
 

We recall in the following definitions the notion of \textit{pushforward} of a Borel probability measure through a Borel map, along with that of \textit{transport plan}. 

\begin{Def}[Pushforward of a measure through a Borel map] 
Given $\mu \in \Pcal(\R^d)$ and a Borel map $f : \R^d \rightarrow \R^d$, the \textnormal{pushforward} $f_{\#} \mu$ of $\mu$ through $f(\cdot)$ is the unique Borel probability measure which satisfies $f_{\#} \mu (B) = \mu(f^{-1}(B))$ for any Borel set $B \subset \R^d$. 
\end{Def}

\begin{Def}[Transport plan]
Let $\mu,\nu \in \Pcal(\R^d)$. We say that $\gamma \in \Pcal(\R^{2d})$ is a \textnormal{transport plan} between $\mu$ and $\nu$ -- denoted by $\gamma \in \Gamma(\mu,\nu)$ -- provided that 
\begin{equation*}
\gamma(A \times \R^d) = \mu(A) \qquad \text{and} \qquad \gamma(\R^d \times B) = \nu(B), 
\end{equation*}
for any pair of Borel sets $A,B \subset \R^d$. This property can be equivalently formulated in terms of pushforwards by $\pi^1_{\#} \gamma = \mu$ and $\pi^2_{\#} \gamma = \nu$, where the maps $\pi^1,\pi^2 : \R^{2d} \rightarrow \R^d$ denote the projection operators on the first and second factor respectively.
\end{Def}

In 1942, the Russian mathematician Leonid Kantorovich introduced in \cite{Kantorovich1942} the \textit{optimal mass transportation problem} in its modern mathematical formulation. Given two probability measures $\mu,\nu \in \Pcal(\R^d)$ and a cost function $c : \R^{2d} \rightarrow (-\infty,+\infty]$, one aims at finding a \textit{transport plan} $\gamma \in \Gamma(\mu,\nu)$ such that 
\begin{equation*}
\INTDom{c(x,y)}{\R^{2d}}{\gamma(x,y)} = \min_{\gamma'} \left\{ \INTDom{c(x,y)}{\R^{2d}}{\gamma'(x,y)} ~~ \text{s.t.} ~ \gamma' \in \Gamma(\mu,\nu) \right\}.
\end{equation*}
This problem has been extensively studied in very broad contexts (see e.g. \cite{AGS,villani1}) with high levels of generality on the underlying spaces and cost functions.  In the particular case where $c(x,y) = |x-y|^p$ for some real number $p \geq 1$, the optimal transport problem can be used to define a distance over the subset $\Pcal_p(\R^d)$ of $\Pcal(\R^d)$.

\begin{Def}[Wasserstein distance and Wasserstein spaces] 
Given two probability measures $\mu,\nu \in \Pcal_p(\R^d)$, the \textnormal{$p$-Wasserstein distance} between $\mu$ and $\nu$ is defined by
\begin{equation*}
W_p(\mu,\nu) = \min_{\gamma} \left\{ \left( \INTDom{|x-y|^p}{\R^{2d}}{\gamma(x,y)} \right)^{1/p} ~~ \text{s.t.} ~ \gamma \in \Gamma(\mu,\nu) \right\}.
\end{equation*}
The set of plans $\gamma \in \Gamma(\mu,\nu)$ achieving this optimal value is denoted by $\Gamma_o(\mu,\nu)$ and is referred to as the set of \textnormal{optimal transport plans} between $\mu$ and $\nu$. The space $(\Pcal_p(\R^d),W_p)$ of probability measures with finite $p$-th moment endowed with the $p$-Wasserstein metric is called the \textnormal{Wasserstein space} of order $p$.
\end{Def}

We recall some of the interesting properties of these spaces in the following proposition (see e.g. \cite[Chapter 7]{AGS} or \cite[Chapter 6]{villani1}).

\begin{prop}[Properties of the Wasserstein spaces]
\label{prop:Properties_Wp}
The Wasserstein spaces $(\Pcal_p(\R^d),W_p)$ are separable geodesic spaces. The topology generated by the $p$-Wasserstein metric metrizes the weak-$^*$ topology of probability measures induced by the narrow convergence \eqref{eq:Narrow_convergence}. More precisely, it holds that
\begin{equation*}
W_p(\mu_n,\mu) \underset{n \rightarrow +\infty}{\longrightarrow} 0 \qquad \text{if and only if} \qquad
\left\{
\begin{aligned}
\mu_n & \underset{n \rightarrow +\infty}{\rightharpoonup^*} \mu, \\
\INTDom{|x|^p}{\R^d}{\mu_n(x)} & \underset{n \rightarrow +\infty}{\longrightarrow} \INTDom{|x|^p}{\R^d}{\mu(x)}.
\end{aligned}
\right.
\end{equation*}
Given two measures $\mu,\nu \in \Pcal(\R^d)$, the Wasserstein distances are ordered, i.e. $W_{p_1}(\mu,\nu) \leq W_{p_2}(\mu,\nu)$ whenever $p_1 \leq p_2$. Moreover, when $p = 1$, the following \textnormal{Kantorovich-Rubinstein duality formula} holds 
\begin{equation} \label{eq:Kantorovich_duality}
W_1(\mu,\nu) = \sup_{\phi} \left\{ \INTDom{\phi(x) \,}{\R^d}{(\mu-\nu)(x)} ~\text{s.t.}~ \Lip(\phi;\R^d) \leq 1 ~\right\},
\end{equation}
where $\Lip(\phi;\R^d)$ denotes the Lipschitz constant of the map $\phi(\cdot)$ on $\R^d$.
\end{prop}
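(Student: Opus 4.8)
\medskip

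\noindent The plan is to establish the four assertions separately, in increasing order of difficulty; all are classical and may be found in \cite[Chapter 7]{AGS} or \cite[Chapter 6]{villani1}. I would dispatch the ordering $W_{p_1} \leq W_{p_2}$ for $p_1 \leq p_2$ first: since any $\gamma \in \Gamma(\mu,\nu)$ is a probability measure on $\R^{2d}$ and $t \mapsto t^{p_2/p_1}$ is convex on $[0,+\infty)$, Jensen's inequality yields $\big( \INTDom{|x-y|^{p_1}}{\R^{2d}}{\gamma(x,y)} \big)^{1/p_1} \leq \big( \INTDom{|x-y|^{p_2}}{\R^{2d}}{\gamma(x,y)} \big)^{1/p_2}$, and it then suffices to take the infimum over $\gamma$ on both sides. (Non-emptiness of $\Gamma(\mu,\nu)$ and attainment of the minimum defining $W_p$ come from $\mu \otimes \nu \in \Gamma(\mu,\nu)$, lower semicontinuity of the cost, and Prokhorov's theorem.) To see that $(\Pcal_p(\R^d),W_p)$ is geodesic I would use displacement interpolation: for $\gamma \in \Gamma_o(\mu,\nu)$ and $\iota_t(x,y) := (1-t)x + ty$, set $\mu_t := (\iota_t)_{\#}\gamma$; the plan $(\iota_s,\iota_t)_{\#}\gamma$ lies in $\Gamma(\mu_s,\mu_t)$ and gives $W_p(\mu_s,\mu_t) \leq |t-s| \, W_p(\mu,\nu)$, so that the triangle inequality forces equality and $t \mapsto \mu_t$ is a constant-speed geodesic. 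For separability I would exhibit the countable dense family of measures $\sum_{i=1}^k q_i \delta_{x_i}$ with $k \in \N$, $x_i \in \mathbb{Q}^d$ and $q_i \in \mathbb{Q}_{\geq 0}$ summing to $1$: given $\mu \in \Pcal_p(\R^d)$ and $\varepsilon>0$, one discards the mass of $\mu$ outside a ball large enough that the discarded $p$-th moment is small, tiles the ball by cubes of small diameter, and transports the mass of each cube onto a rational point inside it, so that every unit of mass moves by at most the cube diameter and the resulting atomic measure is $\varepsilon$-close in $W_p$.

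The core of the statement is the equivalence characterising $W_p$-convergence, where I would argue in two directions. The implication ``$W_p(\mu_n,\mu) \to 0$'' $\Rightarrow$ ``narrow convergence together with convergence of the $p$-th moments'' is the softer one: from $W_1(\mu_n,\mu) \leq W_p(\mu_n,\mu) \to 0$ and the Kantorovich-Rubinstein formula \eqref{eq:Kantorovich_duality} one gets $\INTDom{\phi}{\R^d}{(\mu_n-\mu)} \to 0$ for every bounded Lipschitz $\phi$, which is a determining class for narrow convergence; and choosing $\gamma_n \in \Gamma_o(\mu_n,\mu)$, the reverse triangle inequality $\big| |x|-|y| \big| \leq |x-y|$ together with Minkowski's inequality in $L^p(\gamma_n)$ gives $\big| \big(\INTDom{|x|^p}{\R^d}{\mu_n}\big)^{1/p} - \big(\INTDom{|x|^p}{\R^d}{\mu}\big)^{1/p} \big| \leq W_p(\mu_n,\mu) \to 0$. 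For the converse I would take narrow convergence $\mu_n \rightharpoonup^* \mu$ with $\INTDom{|x|^p}{\R^d}{\mu_n} \to \INTDom{|x|^p}{\R^d}{\mu}$, upgrade the tightness provided by narrow convergence to \emph{uniform $p$-integrability} of $(\mu_n)$ (which is precisely what convergence of the moments buys), extract via Prokhorov a narrowly convergent subsequence of optimal plans $\gamma_{n_k} \rightharpoonup^* \gamma \in \Gamma(\mu,\mu)$, and pass to the limit in $\INTDom{|x-y|^p}{\R^{2d}}{\gamma_{n_k}}$ using lower semicontinuity and the uniform integrability to rule out escape of mass at infinity; since every plan in $\Gamma(\mu,\mu)$ is concentrated on the diagonal, $\INTDom{|x-y|^p}{\R^{2d}}{\gamma} = 0$, whence $W_p(\mu_{n_k},\mu) \to 0$, and a subsequence-of-subsequence argument upgrades this to the full sequence. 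Finally, I would derive \eqref{eq:Kantorovich_duality} itself from the general Kantorovich duality (see \cite[Chapter 1]{villani1}), namely $\min_{\gamma \in \Gamma(\mu,\nu)} \INTDom{c}{\R^{2d}}{\gamma} = \sup\{ \INTDom{\phi}{\R^d}{\mu} + \INTDom{\psi}{\R^d}{\nu} : \phi(x)+\psi(y) \leq c(x,y) \}$, specialised to $c(x,y) = |x-y|$: passing to $c$-transforms and using that $c$ is a metric, one restricts the supremum to pairs $(\phi,-\phi)$ with $\phi$ $1$-Lipschitz.

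I expect the main obstacle to be the nontrivial implication in the metrization statement, that is, deducing $W_p$-convergence from narrow convergence together with convergence of the $p$-th moments: the delicate points are to convert the scalar convergence $\INTDom{|x|^p}{\R^d}{\mu_n} \to \INTDom{|x|^p}{\R^d}{\mu}$ into a genuine uniform $p$-integrability estimate for the family, and then to guarantee that no transported mass escapes to infinity when passing to the limit in the transport cost along a subsequence of optimal plans. Everything else reduces to a convexity inequality, an explicit interpolation, a density argument, and citation of the general duality theorem.
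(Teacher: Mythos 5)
The paper does not prove this proposition; it recalls it as standard and cites \cite[Chapter 7]{AGS} and \cite[Chapter 6]{villani1}, so there is no proof in the source to compare your argument against on a step-by-step basis. Most of what you write is a correct and well-organized reconstruction of the classical proofs: the Jensen inequality for the ordering, displacement interpolation for the geodesic property, rational atomic measures for separability, the easy direction of the metrization statement via Kantorovich--Rubinstein and the reverse Minkowski inequality, and the reduction of \eqref{eq:Kantorovich_duality} to general Kantorovich duality via $c$-transforms.

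However, the hard direction of the metrization equivalence contains a genuine error that would fail if written out. You extract a weak-$^*$ limit $\gamma \in \Gamma(\mu,\mu)$ of the optimal plans $\gamma_{n_k} \in \Gamma_o(\mu_{n_k},\mu)$ and then assert that \emph{every} plan in $\Gamma(\mu,\mu)$ is concentrated on the diagonal, which is false: $\mu \otimes \mu$ lies in $\Gamma(\mu,\mu)$ and is not supported on the diagonal unless $\mu$ is a Dirac mass. What is true, and what you actually need, is that the limit plan $\gamma$ is itself \emph{optimal}, and the unique optimal plan in $\Gamma_o(\mu,\mu)$ for the cost $|x-y|^p$ is $(\Id,\Id)_\#\mu$. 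But the optimality of $\gamma$ is not automatic from narrow convergence — it is precisely the content of the stability theorem for optimal transport plans (see e.g. \cite[Chapter 7]{villani1}), which rests on the persistence of $c$-cyclical monotonicity of supports under weak-$^*$ limits and is a separate nontrivial ingredient. Note also that lower semicontinuity of the transport cost, which you invoke, gives $\int |x-y|^p \, \mathrm{d}\gamma \leq \liminf_k \int |x-y|^p \, \mathrm{d}\gamma_{n_k}$, which is the wrong direction for concluding that the right-hand side tends to zero; what you need, once you know $\gamma = (\Id,\Id)_\#\mu$, is the uniform $p$-integrability to upgrade narrow convergence of $\gamma_{n_k}$ to convergence of the unbounded integrals $\int |x-y|^p \, \mathrm{d}\gamma_{n_k} \to 0$. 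So the sketch is salvageable, but as stated it relies on a false claim and a misattributed role for lower semicontinuity; either invoke the stability of optimality explicitly, or switch to the alternative route in \cite[Proposition 7.1.5]{AGS}, which characterises $W_p$-convergence by testing against continuous functions of $p$-growth and avoids optimal plans altogether.
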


In what follows, we shall mainly restrict our considerations to the Wasserstein spaces of order 1 and 2 built over $\Pcal_c(\R^d)$. We end these introductory paragraphs by recalling the concepts of \textit{disintegration} and \textit{barycenter} in the context of optimal transport.

\begin{Def}[Disintegration and barycenter]
\label{def:Barycenter}
Let $\mu,\nu \in \Pcal_p(\R^d)$ and $\gamma \in \Gamma(\mu,\nu)$ be a transport plan between $\mu$ and $\nu$. We define the \textnormal{disintegration} $\{ \gamma_x\}_{x \in \R^d} \subset \Pcal_p(\R^d)$ of $\gamma$ on its first marginal $\mu$, usually denoted by $\gamma = \int \gamma_x \textnormal{d} \mu(x)$, as the $\mu$-almost uniquely determined Borel family of probability measures such that 
\begin{equation*}
\INTDom{\phi(x,y)}{\R^{2d}}{\gamma(x,y)} = \INTDom{\INTDom{\phi(x,y)}{\R^d}{\gamma_x(y)}}{\R^d}{\mu(x)}, 
\end{equation*}
for any Borel map $\phi \in L^1(\R^{2d},\R;\gamma)$. The \textnormal{barycenter} $\bar{\gamma} \in L^p(\R^d,\R^d;\mu)$ of the plan $\gamma$ is then defined by 
\begin{equation*}
\bar{\gamma}(x) = \INTDom{y \,}{\R^d}{\gamma_x(y)}.
\end{equation*}
for $\mu$-almost every $x \in \R^d$. 
\end{Def}

\begin{prop}[Wasserstein estimate between disintegrations]
\label{prop:Estimation_Barycenter}
Let $\mu \in \Pcal_c(\R^d)$, $\gamma^1 = \int \gamma^1_x \textnormal{d} \mu(x) \in \Pcal_c(\R^{2d})$ and $\gamma^2 = \int \gamma^2_x \textnormal{d} \mu(x) \in \Pcal_c(\R^{2d})$. Then, it holds that
\begin{equation}
\label{eq:Estimation_Barycenter}
W_1(\gamma^1,\gamma^2) \leq \INTDom{W_1(\gamma^1_x,\gamma^2_x)}{\R^d}{\mu(x)}.
\end{equation}
\end{prop}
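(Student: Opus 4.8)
The plan is to deduce the estimate from the Kantorovich--Rubinstein duality formula \eqref{eq:Kantorovich_duality}, applied fibrewise in the first variable $x \in \R^d$, so that no measurable selection of optimal plans is needed. Before starting I would record two preliminary facts. Since $\gamma^1,\gamma^2 \in \Pcal_c(\R^{2d})$ both have first marginal $\mu$, their supports lie in $\R^d \times K'$ for some compact set $K' \subset \R^d$; hence $\gamma^1_x(\R^d \setminus K') = \gamma^2_x(\R^d \setminus K') = 0$ for $\mu$-a.e. $x$, so that $\gamma^1_x,\gamma^2_x \in \Pcal_c(\R^d)$ and $W_1(\gamma^1_x,\gamma^2_x) < +\infty$ for $\mu$-a.e. $x$. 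Moreover the disintegration (Definition \ref{def:Barycenter}) provides a Borel dependence $x \mapsto \gamma^i_x$, and since all these measures are supported in the common compact $K'$, the map $x \mapsto W_1(\gamma^1_x,\gamma^2_x)$ is $\mu$-measurable and bounded, so the right-hand side of \eqref{eq:Estimation_Barycenter} is well defined.

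I would then fix an arbitrary $\Phi : \R^{2d} \to \R$ with $\Lip(\Phi;\R^{2d}) \leq 1$ and observe that for every $x \in \R^d$ the partial map $\Phi(x,\cdot) : \R^d \to \R$ satisfies $\Lip(\Phi(x,\cdot);\R^d) \leq 1$, because $|\Phi(x,y) - \Phi(x,y')| \leq |(x,y)-(x,y')| = |y-y'|$. Using the defining property of the disintegrations $\gamma^i = \int \gamma^i_x \, \textnormal{d}\mu(x)$ and then the duality formula \eqref{eq:Kantorovich_duality} in dimension $d$, applied for $\mu$-a.e. fixed $x$, gives
\begin{equation*}
\INTDom{\Phi(x,y)}{\R^{2d}}{(\gamma^1-\gamma^2)(x,y)} = \INTDom{\left( \INTDom{\Phi(x,y)}{\R^d}{(\gamma^1_x - \gamma^2_x)(y)} \right)}{\R^d}{\mu(x)} \leq \INTDom{W_1(\gamma^1_x,\gamma^2_x)}{\R^d}{\mu(x)} .
\end{equation*}
Taking the supremum of the left-hand side over all $1$-Lipschitz $\Phi$ on $\R^{2d}$ and invoking \eqref{eq:Kantorovich_duality} once more, this time on $\R^{2d}$, yields precisely \eqref{eq:Estimation_Barycenter}.

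For completeness I would also mention the more classical route, which works verbatim for every $W_p$: choose for $\mu$-a.e. $x$ a plan $\sigma_x \in \Gamma_o(\gamma^1_x,\gamma^2_x)$, glue these into $\Sigma \in \Pcal(\R^{2d}\times\R^{2d})$ via $\int \psi \, \textnormal{d}\Sigma = \int_{\R^d} \int_{\R^{2d}} \psi\big((x,y_1),(x,y_2)\big)\, \textnormal{d}\sigma_x(y_1,y_2)\, \textnormal{d}\mu(x)$, check through the disintegration identity that $\Sigma \in \Gamma(\gamma^1,\gamma^2)$, and bound $W_1(\gamma^1,\gamma^2)$ by the cost of $\Sigma$, using $|(x,y_1)-(x,y_2)| = |y_1-y_2|$. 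The only delicate point there is the $\mu$-measurability of the selection $x \mapsto \sigma_x$, which requires a measurable-selection theorem for the closed-valued measurable multifunction $x \mapsto \Gamma_o(\gamma^1_x,\gamma^2_x)$; the duality argument above avoids this entirely, which is why I would present it first. In either approach the only thing requiring genuine care is measurability — of the integrand on the right-hand side, and, in the second route, of the family of optimal plans — the geometric content of the inequality being otherwise immediate.
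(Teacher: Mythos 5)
Your first argument is exactly the paper's proof: fix a $1$-Lipschitz test function on $\R^{2d}$, note its fibres $r \mapsto \Phi(x,r)$ are $1$-Lipschitz, apply Kantorovich--Rubinstein duality fibrewise under the disintegration, and take the supremum. The extra preliminary remarks on measurability and the alternative gluing route are sound but not part of the paper's (shorter) proof.
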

\begin{proof}
Take $\xi \in \Lip(\R^{2d})$ with  $\Lip(\xi;\R^{2d}) \leq 1$. One has that 
\begin{equation*}
\INTDom{\xi(x,r)}{\R^{2d}}{(\gamma^1-\gamma^2)(x,r)} = \INTDom{\INTDom{\xi(x,r)}{\R^d}{(\gamma^1_x - \gamma^2_x)(r)}}{\R^d}{\mu(x)} \leq \INTDom{W_1(\gamma^1_x,\gamma^2_x)}{\R^d}{\mu(x)}
\end{equation*}
by Kantorovich duality \eqref{eq:Kantorovich_duality} since the maps $r \mapsto \xi(x,r)$ are 1-Lipschitz for all $x \in \R^d$. Taking now the supremum over $\xi \in \Lip(\R^{2d})$ with $\Lip(\xi;\R^{2d}) \leq 1$  yields the desired estimate, again as a consequence of \eqref{eq:Kantorovich_duality}. 
\end{proof}


\subsection{Subdifferential calculus in $(\Pcal_2(\R^d),W_2)$}

In this section, we recall some key notions of subdifferential calculus in Wasserstein spaces. We also prove in Proposition \ref{prop:Chainrule} a general chainrule formula along multi-dimensional families of perturbations for sufficiently regular functional defined over $\Pcal_c(\R^d)$. We refer the reader to \cite[Chapters 9-11]{AGS} for a thorough introduction to the theory of subdifferential calculus in Wasserstein spaces, as well as to \cite{WassDiff} and \cite[Chapter 15]{villani1} for complementary material.

Let $\phi : \Pcal_2(\R^d) \rightarrow (-\infty,+\infty]$ be a lower semicontinuous and proper functional with effective domain $D(\phi) = \{ \mu \in \Pcal_2(\R^d) ~\text{s.t.}~ \phi(\mu) < +\infty \}$. We introduce in the following definition the concept of \textit{extended Fr\'echet subdifferential} in $(\Pcal_2(\R^d),W_2)$, following the terminology of \cite[Chapter 10]{AGS}.

\begin{Def}[Extended Wasserstein subdifferential]
\label{def:Subdifferentials}
Let $\mu \in D(\phi)$. We say that a transport plan $\Bgamma \in \Pcal_2(\R^{2d})$ belongs to the \textnormal{extended subdifferential} $\Bpartial \phi(\mu)$ of $\phi(\cdot)$ at $\mu$ provided that 
\begin{enumerate}
\item[(i)] $\pi^1_{\#} \Bgamma = \mu$.
\item[(ii)] For any $\nu \in \Pcal_2(\R^d)$, it holds that 
\begin{equation*} 
\phi(\nu) - \phi(\mu) ~ \geq ~ \inf\limits_{\Bmu \in \Gamma^{1,3}_o(\Bgamma,\nu)} \INTDom{\langle x_2 , x_3-x_1 \rangle}{\R^{3d}}{\Bmu(x_1,x_2,x_3)} + o(W_2(\mu,\nu)),
\end{equation*}
where $\Gamma^{1,3}_o(\Bgamma,\nu) = \{\Bmu \in \Gamma(\Bgamma,\nu) ~\text{s.t.}~ \pi^{1,3}_{\#} \Bmu \in \Gamma_o(\mu,\nu) \}$.
\end{enumerate}

We furthermore say that a transport plan $\Bgamma \in \Pcal_2(\R^{2d})$ belongs to the \textnormal{strong extended subdifferential} $\Bpartial_S \phi(\mu)$ of $\phi(\cdot)$ at $\mu$ if the stronger condition 
\begin{equation} 
\label{eq:Def_StrongSubdiff}
\phi(\nu) - \phi(\mu) ~ \geq ~ \INTDom{\langle x_2 , x_3-x_1 \rangle}{\R^{3d}}{\Bmu(x_1,x_2,x_3)} + o(W_{2,\Bmu}(\mu,\nu)),
\end{equation}
holds for any $\nu \in \Pcal_2(\R^d)$ and $\Bmu \in \Gamma(\Bgamma,\nu)$, where the quantity $W_{2,\Bmu}(\mu,\nu)$ is defined by
\begin{equation*}
W_{2,\Bmu}(\mu,\nu) = \left( \INTDom{|x_1-x_3|^2}{\R^{2d}}{\Bmu(x_1,x_2,x_3)} \right)^{1/2}.
\end{equation*}
\end{Def}

We proceed by recalling the notions of \textit{regularity} and \textit{metric slope} that are instrumental in deriving a sufficient condition for the extended subdifferential of a functional to be non-empty. This result is stated in Theorem \ref{thm:subdifferential_localslope} and its proof can be found in \cite[Theorem 10.3.10]{AGS}.

\begin{Def}[Regular functionals over $(\Pcal_2(\R^d),W_2)$ and metric slope]
\label{def:Regular}
A proper and lower semicontinuous functional $\phi(\cdot)$ is said to be \textnormal{regular} provided that whenever $(\mu_n) \subset \Pcal_2(\R^d)$ and $(\Bgamma_n) \subset \Pcal_2(\R^{2d})$ are taken such that
\begin{equation*}
\left\{
\begin{aligned}
& \mu_n \overset{W_2 \,}{\underset{n \rightarrow +\infty}{\longrightarrow}} \mu,  \hspace{0.7cm} \phi(\mu_n) \underset{n \rightarrow +\infty}{\longrightarrow} \tilde{\phi}, \\
& \Bgamma_n \in \Bpartial_S \phi(\mu_n), \qquad \Bgamma_n \overset{W_2 \,}{\underset{n \rightarrow +\infty}{\longrightarrow}} \Bgamma,
\end{aligned}
\right.
\end{equation*}
it implies that $\Bgamma \in \Bpartial \phi(\mu)$ and $\tilde{\phi} = \phi(\mu)$. Furthermore, we define the \textit{metric slope} $|\partial \phi|(\mu)$ of the functional $\phi(\cdot)$ at $\mu \in D(\phi)$ as
\begin{equation*}
| \partial \phi |(\mu) = \underset{\nu \rightarrow \mu}{\textnormal{limsup}} \left[ \frac{\left( \phi(\mu) - \phi(\nu) \right)^+}{W_2(\mu,\nu)}\right],
\end{equation*}
where $(\bullet)^+$ denotes the positive part.
\end{Def}

\begin{thm}[Link between extended subdifferentials and metric slopes]
\label{thm:subdifferential_localslope}
Let $\phi(\cdot)$ be a proper, bounded from below, lower semicontinuous and regular functional over $\Pcal_2(\R^d)$. Then, the extended subdifferential $\Bpartial \phi(\mu)$ of $\phi(\cdot)$ at some $\mu \in D(\phi)$ is non-empty if and only if $|\partial \phi|(\mu) < +\infty$.

In this case, there exists a unique \textnormal{minimal selection} in $\Bpartial \phi(\mu)$ -- denoted by $\Bpartial^{\circ} \phi(\mu)$ -- which satisfies
\begin{equation*}
\left( \INTDom{|r|^2}{\R^{2d}}{(\Bpartial^{\circ} \phi(\mu))(x,r)} \right)^{1/2} =  \min_{\Bgamma} \left\{ \left( \INTDom{|r|^2}{\R^{2d}}{\Bgamma(x,r)} \right)^{1/2} ~ \text{s.t.} ~ \Bgamma \in \Bpartial \phi(\mu) \right\} = |\partial \phi |(\mu).
\end{equation*}
This minimal selection can moreover be explicitly characterized as follows. Let $\mu_{\tau}$ be the minimizer of the \textnormal{Moreau-Yosida} functional 
\begin{equation}
\label{eq:MoreauYosida}
\Phi_{\textnormal{M}}(\mu,\tau;\cdot) : \nu \in \Pcal_2(\R^d) \mapsto \tfrac{1}{2 \tau} W_2^2(\mu,\nu) + \phi(\nu)
\end{equation}
at some $\tau \in (0,\tau_*)$ with $\tau_* > 0$ small. Then, there exists a family of strong subdifferentials $(\Bgamma_{\tau}) \subset (\Bpartial_S \phi(\mu_{\tau}))$ which converges towards $\Bpartial^{\circ} \phi(\mu)$ in the $W_2$-metric along a suitable vanishing sequence $\tau_n \downarrow 0$.
\end{thm}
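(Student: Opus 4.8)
The plan is to follow the two-sided strategy of \cite[Theorem 10.3.10]{AGS}, the core of which is a Moreau--Yosida regularization. The \emph{easy inclusion} is that $\Bpartial\phi(\mu) \neq \emptyset$ forces $|\partial\phi|(\mu) < +\infty$ together with the ``$\leq$'' half of the norm identity. Given $\Bgamma \in \Bpartial\phi(\mu)$ and any $\nu \in \Pcal_2(\R^d)$, I would start from the defining inequality of Definition \ref{def:Subdifferentials}, rewrite it as $\phi(\mu) - \phi(\nu) \le -\inf_{\Bmu \in \Gamma^{1,3}_o(\Bgamma,\nu)} \int_{\R^{3d}} \langle x_2, x_3 - x_1 \rangle \textnormal{d}\Bmu + o(W_2(\mu,\nu))$, and bound the first term from above by $\sup_{\Bmu} \int_{\R^{3d}} \langle x_2, x_1 - x_3 \rangle \textnormal{d}\Bmu$; a Cauchy--Schwarz estimate using $\pi^{1,2}_{\#}\Bmu = \Bgamma$ and the optimality of $\pi^{1,3}_{\#}\Bmu$ (so that $\int |x_1 - x_3|^2 \textnormal{d}\Bmu = W_2^2(\mu,\nu)$) then yields $\phi(\mu) - \phi(\nu) \le \bigl( \int_{\R^{2d}} |r|^2 \textnormal{d}\Bgamma(x,r) \bigr)^{1/2} W_2(\mu,\nu) + o(W_2(\mu,\nu))$, hence $|\partial\phi|(\mu) \le \bigl( \int |r|^2 \textnormal{d}\Bgamma \bigr)^{1/2}$ after dividing and taking the limsup.

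For the converse together with the explicit characterization, I would fix $\mu \in D(\phi)$ with $|\partial\phi|(\mu) < +\infty$ and analyse the Moreau--Yosida functional \eqref{eq:MoreauYosida}. First comes existence of a minimizer $\mu_\tau$ for small $\tau$: since $\phi$ is bounded below, the sublevels of $\Phi_{\textnormal{M}}(\mu,\tau;\cdot)$ are bounded in $W_2$, hence narrowly relatively compact with uniformly bounded second moments, and lower semicontinuity of $\phi$ and of $\nu \mapsto W_2^2(\mu,\nu)$ under narrow convergence makes the direct method apply. Next I would record the classical metric Moreau--Yosida asymptotics: $\mu_\tau \to \mu$ in $W_2$, $\phi(\mu_\tau) \to \phi(\mu)$, and $\tfrac{1}{\tau} W_2(\mu,\mu_\tau) \to |\partial\phi|(\mu)$ as $\tau \downarrow 0$. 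The decisive step is then to convert the minimality of $\mu_\tau$ into a \emph{first-order optimality condition}: perturbing $\mu_\tau$ along transport plans issued from an optimal plan in $\Gamma_o(\mu_\tau,\mu)$ and differentiating $\tfrac{1}{2\tau} W_2^2(\mu,\cdot)$ shows that the plan obtained by rescaling the optimal transport from $\mu_\tau$ to $\mu$ by $-\tfrac{1}{\tau}$ defines an element $\Bgamma_\tau \in \Bpartial_S \phi(\mu_\tau)$ whose $L^2$-size is at most $\tfrac{1}{\tau} W_2(\mu,\mu_\tau)$.

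Finally I would pass to the limit exactly as prescribed by Definition \ref{def:Regular}. Along a suitable sequence $\tau_n \downarrow 0$, the plans $(\Bgamma_{\tau_n})$ are bounded in $W_2$ by the previous step, so up to a subsequence $\Bgamma_{\tau_n} \to \Bgamma$ in $W_2$; since $\mu_{\tau_n} \to \mu$, $\phi(\mu_{\tau_n}) \to \phi(\mu)$ and $\Bgamma_{\tau_n} \in \Bpartial_S \phi(\mu_{\tau_n})$, the regularity of $\phi$ gives $\Bgamma \in \Bpartial\phi(\mu)$ with $\bigl( \int |r|^2 \textnormal{d}\Bgamma \bigr)^{1/2} \le \liminf_n \tfrac{1}{\tau_n} W_2(\mu,\mu_{\tau_n}) = |\partial\phi|(\mu)$. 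Combined with the easy inclusion, this shows simultaneously that $\Bpartial\phi(\mu) \neq \emptyset$, that $\Bgamma$ realizes the minimal $L^2$-norm equal to $|\partial\phi|(\mu)$, and that $(\Bgamma_{\tau_n})$ is the announced approximating family; uniqueness of the minimal-norm selection $\Bpartial^{\circ}\phi(\mu)$ then follows from strict convexity of the $L^2$-norm on the relevant convex set of plans, as in \cite[Chapter 10]{AGS}.

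I expect the main obstacle to be the third step, namely producing a genuine \emph{plan-valued} Euler--Lagrange inclusion in $\Bpartial_S\phi(\mu_\tau)$ out of the scalar minimality of $\mu_\tau$: this requires handling the possible non-uniqueness of optimal plans between $\mu_\tau$ and $\mu$ and controlling the first variation of the quadratic term, and it is precisely the point where the \emph{strong} extended subdifferential, with the $W_{2,\Bmu}$-remainder, rather than the weak one must be used, since only its closure behaviour is compatible with the regularity hypothesis. A pervasive secondary difficulty, shared by every compactness argument above, is the lack of local compactness of $(\Pcal_2(\R^d),W_2)$, which forces one to work throughout with narrow convergence supplemented by uniform moment bounds rather than with norm-compactness.
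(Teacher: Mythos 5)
The paper does not prove this theorem; it recalls it as a known result and explicitly defers to \cite[Theorem 10.3.10]{AGS} for the proof, so there is no argument in the paper to compare your attempt against. Your sketch is a faithful reconstruction of the AGS strategy -- Moreau--Yosida regularization, a plan-valued Euler--Lagrange inclusion producing a strong subdifferential at the regularized minimizer $\mu_\tau$, and closure via the regularity hypothesis of Definition~\ref{def:Regular} -- and you correctly identify why the \emph{strong} extended subdifferential, rather than the ordinary one, is what must be extracted at the $\mu_\tau$ stage so that the regularity axiom applies in the limit $\tau_n \downarrow 0$.
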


The main reason for resorting to the abstract notion of measure subdifferentials in the context of our argument is that the approximation property of the minimal selection by a sequence of strong subdifferentials plays a key role in the proof of the general Wasserstein chainrule of Proposition \ref{prop:Chainrule} below. In the sequel however, we will mainly use the simpler notion of \textit{classical Wasserstein differentials} which we introduce in the following definition.

\begin{Def}[Classical Wasserstein subdifferentials and superdifferentials]
\label{def:ClassicalSubdifferentials}
Let $\mu \in D(\phi)$. We say that a map $\xi \in L^2(\R^d,\R^d;\mu)$ belongs to the \textnormal{classical subdifferential} $\partial^- \phi(\mu)$ of $\phi(\cdot)$ at $\mu$ provided that 
\begin{equation*} 
\phi(\nu) - \phi(\mu) \geq \sup\limits_{\gamma \in \Gamma_o(\mu,\nu)} \INTDom{\langle \xi(x) , y - x \rangle}{\R^{2d}}{\gamma(x,y)} + o(W_2(\mu,\nu))
\end{equation*}
for all $\nu \in \Pcal_2(\R^d)$. Similarly, we say that a map $\xi \in L^2(\R^d,\R^d;\mu)$ belongs to the \textnormal{classical superdifferential} $\partial^+ \phi(\mu)$ of $\phi(\cdot)$ at $\mu$ if $(-\xi) \in \partial^- (-\phi)(\mu)$. 
\end{Def}

It has been proven recently in \cite{WassDiff} that the definition of classical Wasserstein subdifferential involving a supremum taken over the set of optimal transport plans is equivalent to the usual one introduced in \cite{AGS} which involves an infimum. This allows for the elaboration of a convenient notion of differentiability in Wasserstein spaces as detailed below.

\begin{Def}[Differentiable functionals in $(\Pcal_2(\R^d),W_2)$]
\label{def:WassersteinDiff}
A functional $\phi : \Pcal_2(\R^d) \mapsto \R$ is said to be \textnormal{Wasserstein-differentiable} at some $\mu \in D(\phi)$ if $\partial^- \phi(\mu) \cap \partial^+ \phi(\mu) \neq \emptyset$. In this case, there exists a unique element $\nabla_{\mu} \phi(\mu) \in \partial^- \phi(\mu) \cap \partial^+ \phi(\mu)$ called the \textnormal{Wasserstein gradient} of $\phi(\cdot)$ at $\mu$, which satisfies
\begin{equation}
\label{eq:WassersteinDiff}
\phi(\nu) - \phi(\mu) = \INTDom{\langle \nabla_{\mu} \phi(\mu)(x) , y - x \rangle}{\R^{2d}}{\gamma(x,y)} + o(W_2(\mu,\nu)), 
\end{equation}
for any $\nu \in \Pcal_2(\R^d)$ and any $\gamma \in \Gamma_o(\mu,\nu)$. 
\end{Def}

\begin{rmk}[Consistency of Definition \ref{def:Subdifferentials} and Definition \ref{def:WassersteinDiff}]
\label{rmk:WassersteinDiff}
Let it be noted that the general statements of Definition \ref{def:Subdifferentials} and Theorem \ref{thm:subdifferential_localslope} are consistent with the simpler ones provided in Definition \ref{def:WassersteinDiff}. Indeed, if a functional $\phi : \Pcal_2(\R^d) \rightarrow \R$ is differentiable at some $\mu \in D(\phi)$, then the plan $(\Id \times \nabla_{\mu} \phi(\mu))_{\#} \mu$ is the minimal selection $\Bpartial^{\circ} \phi(\mu)$ in its extended Fr\'echet subdifferential at $\mu$. This follows from the fact that by definition $(\Id \times \nabla_{\mu}\phi(\mu))_{\#} \mu \in \Bpartial \phi(\mu)$, and from the estimate $\NormL{\nabla_{\mu}\phi(\mu)}{2}{\mu} \leq |\partial \phi|(\mu)$. The latter can be obtained by taking derivatives of $\phi(\cdot)$ along elements of the tangent space to $\Pcal_2(\R^d)$ (see \cite[Chapter 8,10]{AGS}) and then by using the fact that $\nabla_{\mu} \phi(\mu)(\cdot)$ is a tangent vector as shown in \cite[Theorem 3.10]{WassDiff}.
\end{rmk}

We conclude these recalls by stating in Proposition \ref{prop:Chainrule} below a new chainrule formula for Wasserstein-differentiable functionals along suitable multi-dimensional perturbations of a measure.

\begin{prop}[Chainrule along multidimensional perturbations by smooth vector fields]
\label{prop:Chainrule}
Let $K \subset \R^d$ be a compact set and $\mu \in \Pcal(K)$. Suppose that $\phi : \Pcal(K) \rightarrow  \R$ is Lipschitz in the $W_2$-metric, regular in the sense of Definition \ref{def:Regular} and Wasserstein-differentiable over $\Pcal(K)$. Given $N \geq 1$ and a small parameter $\epsilon > 0$, suppose that $\G \in C^0([-\epsilon,\epsilon]^N \times K , \R^d)$ is a map satisfying the following assumptions. 
\begin{enumerate}
\item[(i)] $\G(0,\cdot) = \Id$ and $e \mapsto \G(e,x)$ is Fr\'echet-differentiable at $e = 0$ uniformly with respect to $x \in K$.
\item[(ii)] $\supp(\G(e,\cdot)_{\#} \mu) \subset K$ for all $e \in [-\epsilon,\epsilon]^N$.
\item[(iii)] The directional derivative map
\begin{equation*}
\F_{\sigma}: x \in K \mapsto \D_e \G(0,x) \sigma = \sum_{k=1}^N \sigma_k \F_k(x)
\end{equation*} 
is continuous for all $\sigma \in [-\epsilon,\epsilon]^N$. 
\end{enumerate}
Then, the map $e \in [-\epsilon,\epsilon]^N \mapsto \phi(\G(e,\cdot)_{\#} \mu)$ is Fr\'echet-differentiable at $e = 0$ and 
\begin{equation*}
\nabla_e \big( \phi(\G(e,\cdot)_{\#} \mu) \big)_{\vert e=0}(\sigma) = \INTDom{\langle \nabla_{\mu} \phi(\mu)(x) , \D_e \G(0,x) \sigma \rangle}{\R^d}{\mu(x)} = \sum_{k=1}^N \sigma_k \INTDom{\langle \nabla_{\mu} \phi(\mu)(x) , \F_k(x) \rangle}{\R^d}{\mu(x)},
\end{equation*}
for any $\sigma \in [-\epsilon,\epsilon]^N$. 
\end{prop}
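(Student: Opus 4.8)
The plan is to reduce the statement to a one-dimensional-in-parameter computation by exploiting the definition of Wasserstein differentiability (Definition \ref{def:WassersteinDiff}) together with a quantitative control of $W_2(\mu, \G(e,\cdot)_{\#}\mu)$ in terms of $|e|$. First I would observe that by assumption (i), the Fréchet differentiability of $e \mapsto \G(e,x)$ at $e=0$ uniform in $x \in K$ gives an expansion $\G(e,x) = x + \D_e\G(0,x)e + o(|e|)$ where the remainder is uniform over $x$; since $(\Id \times \G(e,\cdot))_{\#}\mu$ is an admissible transport plan between $\mu$ and $\G(e,\cdot)_{\#}\mu$, this immediately yields $W_2(\mu, \G(e,\cdot)_{\#}\mu) \leq \big(\int_{\R^d} |\G(e,x)-x|^2 \textnormal{d}\mu(x)\big)^{1/2} = O(|e|)$. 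Hence any $o(W_2(\mu,\nu))$ term appearing when we apply the differentiability of $\phi$ with $\nu = \G(e,\cdot)_{\#}\mu$ is automatically $o(|e|)$, which is exactly the order needed for Fréchet differentiability of $e \mapsto \phi(\G(e,\cdot)_{\#}\mu)$ at $e=0$.

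Next I would apply \eqref{eq:WassersteinDiff} directly with the specific plan $\gamma = (\Id \times \G(e,\cdot))_{\#}\mu$. The subtlety is that \eqref{eq:WassersteinDiff} requires $\gamma \in \Gamma_o(\mu,\nu)$, whereas $(\Id \times \G(e,\cdot))_{\#}\mu$ need not be optimal. This is where I expect the main obstacle to lie, and it is precisely why the hypothesis that $\phi$ is \emph{regular} (and Lipschitz, and the general extended-subdifferential machinery of Theorem \ref{thm:subdifferential_localslope}) has been included in the statement rather than just plain differentiability. The resolution should be to use Remark \ref{rmk:WassersteinDiff}: the plan $(\Id \times \nabla_\mu\phi(\mu))_{\#}\mu$ is the minimal selection $\Bpartial^\circ\phi(\mu)$ in the extended subdifferential, and one can compose it with the (non-optimal) plan induced by $\G(e,\cdot)$ to estimate $\phi(\G(e,\cdot)_{\#}\mu) - \phi(\mu)$ from below using the strong-extended-subdifferential inequality \eqref{eq:Def_StrongSubdiff}, whose error term is governed by $W_{2,\Bmu}$ computed along the \emph{given} coupling, i.e. by $\big(\int |\G(e,x)-x|^2\textnormal{d}\mu(x)\big)^{1/2} = O(|e|)$, not by the optimal $W_2$. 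For the matching upper bound one runs the same argument with $-\phi$, whose Wasserstein gradient is $-\nabla_\mu\phi(\mu)$; combining the two sandwiches the increment and forces the first-order term to equal $\int_{\R^d} \langle \nabla_\mu\phi(\mu)(x), \D_e\G(0,x)e\rangle \textnormal{d}\mu(x)$.

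In carrying this out I would proceed in the following order. Step 1: record the uniform expansion of $\G$ from (i) and derive $\big(\int |\G(e,x)-x|^2 \textnormal{d}\mu(x)\big)^{1/2} = |e| \, \|\F_{e/|e|}\|_{L^2(\mu)} + o(|e|)$, noting that supp constraint (ii) keeps everything inside the fixed compact $K$ so all $L^2(\mu)$ norms of the relevant vector fields are finite and the $o$-terms are genuinely uniform; continuity of $\F_\sigma$ from (iii) ensures $\sigma \mapsto \int \langle \nabla_\mu\phi(\mu), \F_\sigma\rangle\textnormal{d}\mu$ is well-defined and linear in $\sigma$. Step 2: build the three-plan coupling $\Bmu_e \in \Gamma((\Id\times\nabla_\mu\phi(\mu))_{\#}\mu, \G(e,\cdot)_{\#}\mu)$ by transporting mass along $x \mapsto (x, \nabla_\mu\phi(\mu)(x), \G(e,x))$, and plug it into \eqref{eq:Def_StrongSubdiff} to get $\phi(\G(e,\cdot)_{\#}\mu) - \phi(\mu) \geq \int \langle \nabla_\mu\phi(\mu)(x), \G(e,x)-x\rangle\textnormal{d}\mu(x) + o(|e|)$. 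Step 3: repeat Step 2 with $-\phi$ in place of $\phi$ to obtain the reverse inequality. Step 4: substitute the expansion $\G(e,x)-x = \D_e\G(0,x)e + o(|e|)$ (uniform in $x$, integrated against the finite measure $\mu$ supported in $K$) into both inequalities, conclude that $\phi(\G(e,\cdot)_{\#}\mu) - \phi(\mu) - \int\langle\nabla_\mu\phi(\mu)(x),\D_e\G(0,x)e\rangle\textnormal{d}\mu(x) = o(|e|)$, and read off Fréchet differentiability at $e=0$ with the asserted derivative; expanding $\D_e\G(0,x)\sigma = \sum_{k=1}^N \sigma_k \F_k(x)$ gives the final displayed formula. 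The only place where genuine care is needed is Step 2–3, i.e. verifying that the extended-subdifferential inequality can indeed be applied along the non-optimal coupling with the correct ($W_{2,\Bmu}$-controlled, hence $O(|e|)$) error — everything else is bookkeeping with uniform remainders on the compact set $K$.
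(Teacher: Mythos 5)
Your overall strategy is the right one, and you have correctly located the crux: the coupling $(\Id\times\G(e,\cdot))_{\#}\mu$ is not optimal, so the plain differentiability formula \eqref{eq:WassersteinDiff} cannot be invoked directly, and one must instead pass through the extended/strong subdifferential machinery. However, Step 2 of your plan contains a genuine gap. You propose to plug the three-plan $\Bmu_e$ (built on the map $x\mapsto(x,\nabla_\mu\phi(\mu)(x),\G(e,x))$) into the strong-subdifferential inequality \eqref{eq:Def_StrongSubdiff} \emph{with the minimal selection} $\Bgamma=(\Id\times\nabla_\mu\phi(\mu))_{\#}\mu$ playing the role of the base plan. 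But \eqref{eq:Def_StrongSubdiff} is only guaranteed to hold for $\Bgamma\in\Bpartial_S\phi(\mu)$, whereas Remark \ref{rmk:WassersteinDiff} only places the minimal selection $\Bpartial^\circ\phi(\mu)=(\Id\times\nabla_\mu\phi(\mu))_{\#}\mu$ in the (weaker) extended subdifferential $\Bpartial\phi(\mu)$, whose defining inequality is taken as an infimum over couplings $\Bmu\in\Gamma^{1,3}_o(\Bgamma,\nu)$ with optimal $\pi^{1,3}$-marginal. Your $\Bmu_e$ has $\pi^{1,3}_{\#}\Bmu_e=(\Id\times\G(e,\cdot))_{\#}\mu$, which is precisely \emph{not} an optimal transport plan in general, so neither the strong inequality nor the weak one is available for the minimal selection with this coupling.

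This is exactly the obstruction that the paper's proof is designed to overcome, and it cannot be dismissed as bookkeeping. The paper invokes the last part of Theorem \ref{thm:subdifferential_localslope}: there is a sequence of Moreau--Yosida minimizers $\mu_{\tau_n}\to\mu$ and genuine strong subdifferentials $\Bgamma_{\tau_n}\in\Bpartial_S\phi(\mu_{\tau_n})$ (strong at the perturbed points $\mu_{\tau_n}$, not at $\mu$) converging in $W_2$ to $\Bpartial^\circ\phi(\mu)$. One applies \eqref{eq:Def_StrongSubdiff} to each $\Bgamma_{\tau_n}$ along the three-plan $\Bmu^{\tau_n}_\sigma=(\pi^1,\pi^2,\G(\sigma,\cdot)\circ\pi^1)_{\#}\Bgamma_{\tau_n}$ — legitimate because these plans \emph{are} strong subdifferentials — and then passes to the limit $n\to\infty$ using the $W_2$-convergence of $\Bgamma_{\tau_n}$, the Lipschitz continuity of $\phi$ (to handle the base-point change $\mu_{\tau_n}\to\mu$), and a Vitali argument in $\sigma$. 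Your Steps 1, 3 and 4 and your observation that $W_{2,\Bmu}(\mu,\G(e,\cdot)_{\#}\mu)=O(|e|)$ are all correct and match the paper; what is missing is this approximation layer, without which the central inequality in Step 2 is not justified.
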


\begin{proof}
By assumption, $\phi(\cdot)$ is Lipschitz over $\Pcal(K)$ in the $W_2$-metric, regular in the sense of Definition \ref{def:Regular} and Wasserstein-differentiable in the sense of Definition \ref{def:WassersteinDiff} at $\mu$. Hence, $|\partial \phi|(\mu) < +\infty$ and $\Bpartial^{\circ} \phi(\mu) = (\Id \times \nabla_{\mu} \phi(\mu))_{\#} \mu$.

Let $(\tau_n) \subset (0,\tau_*)$ be a suitable vanishing sequence, $(\mu_{\tau_n}) \subset D(\phi)$ be a sequence of minimizers of the Moreau-Yosida functional \eqref{eq:MoreauYosida} and $(\Bgamma_{\tau_n}) \subset (\Bpartial_S \phi(\mu_{\tau_n}))$ be the corresponding sequence of strong subdifferentials converging in the $W_2$-metric towards $(\Id \times \nabla_{\mu} \phi(\mu))_{\#} \mu$. For any $\sigma \in [-\epsilon,\epsilon]^N$, define the family of 3-plans $\Bmu_{\sigma}^{\tau_n} = (\pi^1,\pi^2,\G(\sigma,\cdot) \circ \pi^1)_{\#} \Bgamma_{\tau_n}$. By \eqref{eq:Def_StrongSubdiff}, it holds that
\begin{equation*}
\frac{\phi(\G(\sigma,\cdot)_{\#} \mu_{\tau_n}) - \phi(\mu_{\tau_n})}{|\sigma|} \geq \INTDom{\Big\langle r , \frac{\G(\sigma,x)-x}{|\sigma|} \Big\rangle}{\R^{2d}}{\Bgamma_{\tau_n}(x,r)} + o(1), 
\end{equation*}
since $o(W_{2,\Bmu_\sigma^{\tau_n}}(\mu_{\tau_n},\G(\sigma,\cdot)_{\#}\mu_{\tau_n})) = o(\NormL{\G(\sigma,\cdot)-\Id}{2}{\mu_{\tau_n}}) = o(|\sigma|)$. For all $\sigma \in [-\epsilon,\epsilon]^N$, the maps $(x,r) \mapsto | \langle r , (\G(\sigma,x)-x)/|\sigma| \rangle |$ are continuous and uniformly integrable with respect to the family of measures $\{\Bgamma_{\tau_n} \}_{n=1}^{+\infty}$. By a classical convergence result for sequences of measures (see e.g. \cite[Lemma 5.1.7]{AGS}), we recover that
\begin{equation}
\label{eq:ChainruleEq1}
\begin{aligned}
\INTDom{\Big\langle r , \frac{\G(\sigma,x)-x}{|\sigma|} \Big\rangle}{\R^{2d}}{\Bgamma_{\tau_n}(x,r)} ~\underset{n \rightarrow +\infty}{\longrightarrow}~ & \INTDom{\Big\langle r , \frac{\G(\sigma,x)-x}{|\sigma|} \Big\rangle}{\R^{2d}}{(\Bpartial^{\circ} \phi(\mu))(x,r)} \\
= \hspace{0.45cm} & \INTDom{\Big\langle \nabla_{\mu} \phi(\mu)(x) , \frac{\G(\sigma,x)-x}{|\sigma|} \Big\rangle}{\R^d}{\mu(x)}.
\end{aligned}
\end{equation}
Furthermore, the continuity of $x \mapsto \G(\sigma,x)$ uniformly with respect to $\sigma \in [-\epsilon,\epsilon]^N$ together with the Lipschitzianity of $\phi(\cdot)$ with respect to the $W_2$-topology implies that
\begin{equation}
\label{eq:ChainruleEq2}
\frac{\phi(\G(\sigma,\cdot)_{\#} \mu_{\tau_n}) - \phi(\mu_{\tau_n})}{|\sigma|} ~\underset{n \rightarrow +\infty}{\longrightarrow}~ \frac{\phi(\G(\sigma,\cdot)_{\#} \mu_{\tau}) - \phi(\mu_{\tau})}{|\sigma|}.
\end{equation}

Merging together \eqref{eq:ChainruleEq1} and \eqref{eq:ChainruleEq2} and applying Vitali's Convergence Theorem to the $\mu$-uniformly integrable family of maps $\sigma \mapsto \langle \nabla_{\mu} \phi(\mu)(\cdot) , (\G(\sigma,\cdot)-\Id)/|\sigma| \rangle$ (see e.g. \cite[Exercise 1.18]{AmbrosioFuscoPallara}), we obtain that
\begin{equation*}
\frac{\phi(\G(\sigma,\cdot)_{\#} \mu) - \phi(\mu)}{|\sigma|} \geq \INTDom{\big\langle \nabla_{\mu} \phi(\mu)(x) , \D_e \G(0,x) \sigma \big\rangle}{\R^d}{\mu(x)} + o(1) = \sum_{k=1}^N \sigma_k \INTDom{\big\langle \nabla_{\mu} \phi(\mu)(x) , \F_k(x) \big\rangle}{\R^d}{\mu(x)} + o(1).
\end{equation*}
Using the fact that $(\Id \times (-\nabla_{\mu} \phi(\mu)))_{\#} \mu$ is the minimal selection in the extended Fr\'echet subdifferential of $(-\phi(\cdot))$ as a consequence of Remark \ref{rmk:WassersteinDiff}, we recover the converse inequality. 
\end{proof}

We list in Appendix \ref{appendix:Examples} a series of commonly encountered functionals which are both regular in the sense of Definition \ref{def:Regular} differentiable in the sense of Definition \ref{def:WassersteinDiff}, and we provide their Wasserstein gradients. This list was already presented along the same lines in our previous work \cite{PMPWass}, and we add it here for self-containedness.


\subsection{The continuity equation with non-local velocities in $\R^d$}

In this section, we introduce the continuity equation with non-local velocities in $(\Pcal_c(\R^d),W_1)$. This equation is commonly written as
\begin{equation} \label{eq:Nonlocal_continuity_equation}
\partial_t \mu(t) + \nabla \cdot \left( v[\mu(t)](t,\cdot) \mu(t) \right) = 0,
\end{equation}
where $t \mapsto \mu(t)$ is a narrowly continuous family of probability measures on $\R^d$ and $(t,x) \mapsto v[\mu](t,x)$ is a Borel family of vector fields satisfying the condition
\begin{equation}
\INTSeg{\INTDom{|v[\mu(t)](t,x)| \,}{\R^d}{\mu(t)(x)}}{t}{0}{T} < +\infty.
\end{equation}  
Equation \eqref{eq:Nonlocal_continuity_equation} has to be understood in duality with smooth and compactly supported functions, i.e.  
\begin{equation}
\label{eq:NonlocalPDE_distributions1}
\INTSeg{\INTDom{\Big( \partial_t \xi(t,x) + \left\langle \nabla_x \xi(t,x) , v[\mu(t)](t,x) \Big\rangle \right)}{\R^d}{\mu(t)(x)}}{t}{0}{T} = 0
\end{equation}
for all $\xi \in C^{\infty}_c([0,T] \times \R^d)$. This definition can be  alternatively written as
\begin{equation}
\label{eq:NonlocalPDE_distributions2}
\derv{}{t}{} \INTDom{\xi(x)}{\R^d}{\mu(t)(x)} = \INTDom{\langle \nabla \xi(x) , v[\mu(t)](t,x) \rangle}{\R^d}{\mu(t)(x)}
\end{equation}
for all $\xi \in C^{\infty}_c(\R^d)$ and $\Lcal^1$-almost every $t \in [0,T]$. 

We recall in Theorem \ref{thm:Nonlocal_PDE} the classical existence, uniqueness and representation formula for solutions of non-local PDEs. Although these results were first derived in \cite{AmbrosioGangbo}, we state here a version explored in \cite{Pedestrian,ControlKCS} which is better suited to our smoother control-theoretic framework. 

\begin{thm}[Existence, uniqueness and representation of solutions for \eqref{eq:Nonlocal_continuity_equation}] 
\label{thm:Nonlocal_PDE}
Consider a non-local velocity field $v[\cdot](\cdot,\cdot)$ defined as
\begin{equation}
v : \mu \in \Pcal_c(\R^d) \mapsto v[\mu](\cdot,\cdot) \in L^1(\R , C^1(\R^d,\R^d)), 
\end{equation}
and satisfying the following assumptions
\begin{framed}
\vspace{-0.2cm}
\begin{center}
\textnormal{\textbf{(H')}}
\end{center}
\begin{enumerate}
\item[(i)] There exist positive constants $L_1$ and $M$ such that
\begin{equation*}
| v[\mu](t,x) | \leq M (1+ |x|) ~~ \text{and} ~~ | v[\mu](t,x) - v[\mu](t,y) | \leq L_1 |x-y|
\end{equation*}
for every $\mu \in \Pcal_c(\R^d)$, $t \in \R$ and $(x,y) \in \R^{2d}$.
\item[(ii)] There exists a positive constant $L_2$ such that
\begin{equation*}
\NormC{v[\mu](t,\cdot) - v[\nu](t,\cdot)}{0}{\R^d} \leq L_2 W_1(\mu,\nu) 
\end{equation*}
for every $\mu,\nu \in \Pcal_c(\R^d)$ and $t \in \R$.
\end{enumerate}
\vspace{-0.2cm}
\end{framed}

Then, for every initial datum $\mu^0 \in \Pcal_c(\R^d)$, the Cauchy problem
\begin{equation} \label{eq:Nonlocal_Cauchy_problem}
\left\{
\begin{aligned}
& \partial_t \mu(t) + \nabla \cdot \left( v[\mu(t)](t,\cdot) \mu(t) \right) = 0 \\
& \mu(0) = \mu^0,
\end{aligned}
\right.
\end{equation}
admits a unique solution $\mu(\cdot) \in \Lip_{\loc}(\R_+,\Pcal_c(\R^d))$. If $\mu^0$ is absolutely continuous with respect to $\Lcal^d$, then $\mu(t)$ is absolutely continuous with respect to $\Lcal^d$ as well for all times $t \in \R_+$. Furthermore for every $T > 0$ and every $\mu^0,\nu^0 \in \Pcal_c(\R^d)$, there exists positive constants $R_T,L_T > 0$ such that
\begin{equation*}
\supp(\mu(t)) \subset \overline{B(0,R_T)} \qquad \text{and} \qquad W_1(\mu(t),\nu(t)) \leq L_T W_1(\mu^0,\nu^0)
\end{equation*}
for all times $t \in [0,T]$, where $\mu(\cdot),\nu(\cdot)$ are solutions of \eqref{eq:Nonlocal_Cauchy_problem} with initial conditions $\mu^0,\nu^0$ respectively.

Let $\mu(\cdot)$ be the unique solution of \eqref{eq:Nonlocal_Cauchy_problem} and $(\Phi_{(0,t)}^v[\mu^0](\cdot))_{t \geq 0}$ be the family of flows of diffeomorphisms generated by the non-autonomous velocity field $(t,x) \mapsto v[\mu(t)](t,x)$, i.e.
\begin{equation}
\label{eq:Flow_def}
\left\{
\begin{aligned}
\partial_t \Phi_{(0,t)}^v[\mu^0](x) & = v[\mu(t)] \left( t,\Phi_{(0,t)}^v[\mu^0](x) \right), \\
\Phi^v_{(0,0)}[\mu^0](x) & = x \hspace{1.25cm} \text{for all $x$ in $\R^d$}.
\end{aligned}
\right.
\end{equation}
Then, the curve $\mu(\cdot)$ is given explicitly by the pushforward formula
\begin{equation}
\label{eq:Representation_Formula}
\mu(t) = \Phi_{(0,t)}^v[\mu^0](\cdot)_{\#} \mu^0, 
\end{equation} 
for all times $t \in [0,T]$. 
\end{thm}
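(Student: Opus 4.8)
The statement of Theorem \ref{thm:Nonlocal_PDE} is a standard well-posedness and representation result for non-local continuity equations, and the natural strategy is a Banach fixed-point argument applied to the flow map. First I would fix $T>0$ and a ball $\overline{B(0,R^0)} \supset \supp(\mu^0)$; using the sublinear growth bound in \textbf{(H')}(i) and Gr\"onwall's inequality one obtains, for any fixed curve $t\mapsto \nu(t)$ of compactly supported measures with uniformly bounded supports, an a priori radius $R_T$ such that the flow $\Phi^v_{(0,t)}[\nu](\cdot)$ generated by the (now non-autonomous but \emph{local}) field $(t,x)\mapsto v[\nu(t)](t,x)$ keeps $\overline{B(0,R^0)}$ inside $\overline{B(0,R_T)}$ for all $t\in[0,T]$. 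The field is Lipschitz in $x$ by \textbf{(H')}(i) and integrable in $t$, so by Cauchy--Lipschitz the flow is a well-defined family of bi-Lipschitz homeomorphisms (in fact $C^1$-diffeomorphisms when $v[\nu(t)]\in C^1$), and $t\mapsto \Phi^v_{(0,t)}[\nu](\cdot)_\#\mu^0$ is easily checked to be a distributional solution of the linear continuity equation with velocity $v[\nu(\cdot)](\cdot,\cdot)$ via the classical computation on test functions $\xi\in C^\infty_c(\R^d)$ together with the chain rule along characteristics.

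The core of the argument is then the fixed-point map $\Gamma: \nu(\cdot) \mapsto \Phi^v_{(0,\cdot)}[\nu](\cdot)_\#\mu^0$ acting on the complete metric space $C^0([0,t_1],\Pcal(\overline{B(0,R_T)}))$ equipped with the sup-in-time $W_1$ distance, for a short time $t_1>0$. To show it is a contraction I would estimate $W_1(\Phi^v_{(0,t)}[\nu_1]_\#\mu^0, \Phi^v_{(0,t)}[\nu_2]_\#\mu^0)$ by transporting along the obvious plan $(\Phi^v_{(0,t)}[\nu_1],\Phi^v_{(0,t)}[\nu_2])_\#\mu^0$, which reduces matters to bounding $\sup_{x}|\Phi^v_{(0,t)}[\nu_1](x)-\Phi^v_{(0,t)}[\nu_2](x)|$; differentiating in $t$, adding and subtracting $v[\nu_1(t)](t,\Phi^v_{(0,t)}[\nu_2](x))$, and using the $x$-Lipschitz bound $L_1$ together with the measure-Lipschitz bound $L_2 W_1(\nu_1(t),\nu_2(t))$ from \textbf{(H')}(ii) gives a Gr\"onwall estimate of the form $\sup_{[0,t_1]}W_1(\Gamma\nu_1,\Gamma\nu_2) \leq L_2 t_1 e^{L_1 t_1}\sup_{[0,t_1]}W_1(\nu_1,\nu_2)$, which is a contraction for $t_1$ small enough depending only on $L_1,L_2$. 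Iterating over $[0,T]$ in finitely many steps of uniform length yields existence and uniqueness of $\mu(\cdot)$ on all of $[0,T]$, hence on $\R_+$, and the representation formula \eqref{eq:Representation_Formula} drops out of the construction; the local-in-time Lipschitz regularity $\mu(\cdot)\in\Lip_{\loc}(\R_+,\Pcal_c(\R^d))$ follows from $W_1(\mu(t),\mu(s)) \leq \int_s^t \int |v[\mu(\tau)](\tau,x)|\,\mathrm{d}\mu(\tau)(x)\,\mathrm{d}\tau$ and the sublinear bound on bounded supports.

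The remaining assertions are then routine consequences. The uniform support bound $\supp(\mu(t))\subset\overline{B(0,R_T)}$ is exactly the a priori estimate already used to set up the fixed point. The stability estimate $W_1(\mu(t),\nu(t))\leq L_T W_1(\mu^0,\nu^0)$ follows by the same Gr\"onwall computation as for the contraction, now comparing the two \emph{solutions} with possibly different initial data: transport along $(\Phi^v_{(0,t)}[\mu^0],\Phi^v_{(0,t)}[\nu^0])_\#\gamma^0$ for $\gamma^0\in\Gamma_o(\mu^0,\nu^0)$ and absorb the $L_2 W_1(\mu(\cdot),\nu(\cdot))$ term by Gr\"onwall. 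Preservation of absolute continuity with respect to $\Lcal^d$ is immediate from the representation formula, since $\Phi^v_{(0,t)}[\mu^0](\cdot)$ is a bi-Lipschitz (indeed $C^1$-diffeomorphic) change of variables and pushforwards of $\Lcal^d$-absolutely continuous measures by such maps remain absolutely continuous. The main obstacle, such as it is, is purely bookkeeping: one must be careful that the a priori support radius $R_T$ is chosen \emph{before} and independently of the fixed-point iteration (so that the metric space on which $\Gamma$ acts is genuinely invariant), and that the Lipschitz and growth constants entering the Gr\"onwall loops are the uniform ones furnished by \textbf{(H')} rather than constants depending on the unknown curve; once the domain is correctly fixed, every step is a standard ODE/optimal-transport estimate. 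For the full details in this exact smooth setting I would refer to \cite{Pedestrian,ControlKCS}, with the original non-smooth version in \cite{AmbrosioGangbo}.
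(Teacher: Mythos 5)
Your proof is correct and follows the standard Banach fixed-point argument on the flow map with the $\sup$-in-time $W_1$ metric, which is exactly the route taken in the references the paper cites for this recalled theorem (\cite{Pedestrian,ControlKCS}, originally \cite{AmbrosioGangbo}). The paper itself gives no proof — it states the result as classical — so there is no divergence to report; your Gr\"onwall estimates, a~priori support bound $R_T$ fixed before the iteration, and the handling of the stability estimate by transporting an optimal plan between initial data are all in order.
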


In the following proposition, we recall a standard result which links the differential in space of the flow of diffeomorphisms of an ODE to the solution of the corresponding linearized Cauchy problem (see e.g. \cite[Theorem 2.3.1]{BressanPiccoli}).

\begin{prop}[Classical differential of a non-local flow of diffeomorphisms]
\label{prop:Classical_Differential_flow}
Let $\mu^0 \in \Pcal_c(\R^d)$ and let $(\Phi^v_{(0,t)}[\mu^0](\cdot))_{t \in [0,T]}$ be the flows of diffeomorphisms generated by a non-local velocity field $v[\cdot](\cdot,\cdot)$ satisfying hypotheses \textnormal{\textbf{(H')}}. Then, the flow map $x \mapsto \Phi^v_{(s,t)}[\mu(s)](x)$ is Fr\'echet differentiable over $\R^d$ for any $s,t \in [0,T]$. Moreover, its differential $\D_x \Phi^v_{(s,t)}[\mu(s)](x) h$ in a direction $h \in \R^d$ is the unique solution of the linearized Cauchy problem
\begin{equation}
\label{eq:Differential_LocalFlow}
\left\{
\begin{aligned}
\partial_t w(t,x) & = \D_x v [\mu(t)] \left( t,\Phi^v_{(s,t)}[\mu(s)](x) \right) w(t,x), \\
w(s,x) & = h, 
\end{aligned}
\right.
\end{equation}
for any $s,t \in [0,T]$. 
\end{prop}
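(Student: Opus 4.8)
The plan is to reduce Proposition \ref{prop:Classical_Differential_flow} to the classical theory of differentiability of flows of (non-autonomous, but fixed) vector fields, which is legitimate once we observe that along the unique solution $\mu(\cdot)$ of the Cauchy problem \eqref{eq:Nonlocal_Cauchy_problem}, the map $(t,x) \mapsto v[\mu(t)](t,x)$ becomes a \emph{prescribed} non-autonomous velocity field $\tilde{v}(t,x) := v[\mu(t)](t,x)$, with no further dependence on the measure. Hypothesis \textbf{(H')}(i) guarantees that $\tilde{v}$ satisfies the hypotheses of the standard Cauchy--Lipschitz theory: for $\Lcal^1$-a.e.\ $t$ it is of class $C^1$ in $x$ with $\Lip(\tilde{v}(t,\cdot);\R^d) \leq L_1$, and it obeys the sublinear bound $|\tilde{v}(t,x)| \leq M(1+|x|)$. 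Since the solution $\mu(\cdot)$ is supported in a fixed compact ball $\overline{B(0,R_T)}$ for $t \in [0,T]$ by Theorem \ref{thm:Nonlocal_PDE}, and the flow maps stay in a slightly larger compact set, all relevant estimates are uniform on $[0,T]$.

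First I would fix the notation: for $s \in [0,T]$ set $\mu_s := \mu(s)$, and note that by the semigroup property of the representation formula \eqref{eq:Representation_Formula} the flow $\Phi^v_{(s,t)}[\mu(s)](\cdot)$ is exactly the flow of $\tilde{v}$ started at time $s$, i.e.\ it solves $\partial_t \Phi^v_{(s,t)}[\mu(s)](x) = \tilde{v}(t, \Phi^v_{(s,t)}[\mu(s)](x))$ with $\Phi^v_{(s,s)}[\mu(s)](x) = x$. Then I would invoke the classical result \cite[Theorem 2.3.1]{BressanPiccoli} (or the analogous statement in \cite{AgrachevSachkov}): if a non-autonomous vector field is Lipschitz in $x$ uniformly in $t$ and continuously differentiable in $x$, then its flow $x \mapsto \Phi^v_{(s,t)}[\mu(s)](x)$ is Fr\'echet differentiable on $\R^d$, and the differential applied to $h \in \R^d$, call it $w(t,x) := \D_x \Phi^v_{(s,t)}[\mu(s)](x)\,h$, is the unique (absolutely continuous in $t$) solution of the linear Cauchy problem
\begin{equation*}
\left\{
\begin{aligned}
\partial_t w(t,x) & = \D_x \tilde{v}\big(t, \Phi^v_{(s,t)}[\mu(s)](x)\big)\, w(t,x), \\
w(s,x) & = h,
\end{aligned}
\right.
\end{equation*}
which is precisely \eqref{eq:Differential_LocalFlow} after unwinding the abbreviation $\tilde{v} = v[\mu(\cdot)](\cdot,\cdot)$. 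The map $t \mapsto \D_x \tilde{v}(t, \Phi^v_{(s,t)}[\mu(s)](x))$ is in $L^1([0,T]; \R^{d \times d})$ with a bound depending only on $L_1$ (using that $\|\D_x v[\mu(t)](t,\cdot)\|_{C^0} \leq L_1$ for a.e.\ $t$, since $v[\mu(t)](t,\cdot) \in C^1$ is $L_1$-Lipschitz), so this linear ODE has a unique solution by Carath\'eodory's theorem, and uniqueness of the linearized problem is immediate from Gr\"onwall's inequality.

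The one point that requires a little care — and the main (mild) obstacle — is justifying the differentiability of the flow itself, since $\tilde{v}$ is only assumed measurable in $t$ (through the $L^1(\R, C^1(\R^d,\R^d))$ regularity of $\mu \mapsto v[\mu]$), not continuous in $t$. I would handle this by the standard difference-quotient argument in the Carath\'eodory setting: for fixed $s$ and a direction $h$, consider $z_\delta(t,x) := \delta^{-1}\big(\Phi^v_{(s,t)}[\mu(s)](x + \delta h) - \Phi^v_{(s,t)}[\mu(s)](x)\big)$, write its integral equation, add and subtract $\D_x \tilde{v}$ evaluated along the unperturbed trajectory, and apply Gr\"onwall together with the $L^1$-in-$t$, uniform-in-$x$ modulus of continuity of $x \mapsto \D_x \tilde{v}(t,x)$ on the relevant compact set to pass to the limit $\delta \to 0$ and identify the limit as the solution $w(t,x)$ of \eqref{eq:Differential_LocalFlow}. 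This is entirely routine and is exactly what \cite[Theorem 2.3.1]{BressanPiccoli} records; I would cite it rather than reproduce the estimates, noting only that the compact-support property from Theorem \ref{thm:Nonlocal_PDE} makes all the Lipschitz and modulus-of-continuity bounds global. Finally, I would remark that the same reasoning gives differentiability of $(s,x) \mapsto \Phi^v_{(s,t)}[\mu(s)](x)$ jointly if desired, but the statement as phrased only needs the spatial differential, so the proof stops here.
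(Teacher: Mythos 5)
Your proposal is correct and takes essentially the same route as the paper: the paper recalls Proposition~\ref{prop:Classical_Differential_flow} as a standard result and simply cites \cite[Theorem~2.3.1]{BressanPiccoli} without further proof, and your argument spells out exactly why that citation applies, namely that once the solution $\mu(\cdot)$ of the Cauchy problem is fixed, $(t,x) \mapsto v[\mu(t)](t,x)$ becomes a prescribed Carath\'eodory vector field satisfying the hypotheses of the classical differentiability-of-flows theorem.
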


In our previous work \cite{PMPWass}, we extended the classical result of Proposition \ref{prop:Classical_Differential_flow} to the Wasserstein setting in order to compute derivatives of the flow maps $\Phi^v_{(0,t)}[\mu^0](\cdot)$ with respect to their initial measure $\mu^0$. In the following proposition, we state a further refinement of this result to the case in which the initial measure is perturbed by a multi-dimensional family of maps, in the spirit of the chainrule stated in Proposition \ref{prop:Chainrule}.

\begin{prop}[Wasserstein differential of a non-local flow of diffeomorphisms]
\label{prop:Wasserstein_Differential_flow}
Let $K \subset \R^d$ be a compact set, $\mu^0 \in \Pcal(K)$, $v[\cdot](\cdot,\cdot)$ be a non-local velocity field satisfying hypotheses \textnormal{\textbf{(H')}} of Theorem \ref{thm:Nonlocal_PDE} and $(\Phi^v_{(0,t)}[\mu^0](\cdot))_{t \in [0,T]}$ be its associated flow of diffeomorphisms. Suppose moreover that for any $i \in \{1,\dots,d\}$, the maps $\mu \mapsto v^i[\mu](t,x)$ are regular and differentiable over $\Pcal(K)$ uniformly with respect to $(t,x) \in [0,T] \times K$. Given $N \geq 1$ and a small parameter $\epsilon > 0$, let $\G \in C^0([-\epsilon,\epsilon]^N \times K , \R^d)$ be a function satisfying the hypotheses of Proposition \ref{prop:Chainrule}. 

Then, the map $e \in [-\epsilon,\epsilon]^N \mapsto \Phi^v_{(s,t)}[\G(e,\cdot)_{\#}\mu](x)$ is Fr\'echet-differentiable at $e = 0$ and its differential $w_{\sigma}(\cdot,x)$ in an arbitrary direction $\sigma \in [-\epsilon,\epsilon]^N$ can be expressed as
\begin{equation*}
w_{\sigma}(t,x) = \sum_{k=1}^N \sigma_k w_k(t,x),
\end{equation*}
where for any $k \in \{1,\dots,N\}$, the map $w_k(\cdot,x)$ is the unique solution of the non-local Cauchy problem
\begin{equation}
\label{eq:Differential_NonLocalFlow}
\left\{
\begin{aligned}
\partial_t w_k(t,x) & = \D_x v [\mu(t)] \left( t,\Phi^v_{(s,t)}[\mu](x) \right) w_k(t,x) \\
& + \INTDom{\BGamma^v_{\left( t , \Phi^v_{(s,t)}[\mu](x) \right)} \left( \Phi^v_{(s,t)}[\mu](y) \right) \left(\D_x \Phi^v_{(s,t)}[\mu](y) \F_k(y) + w_k(t,y) \right)}{\R^d}{\mu(y)} \\
w_k(s,x) & = 0.
\end{aligned}
\right.
\end{equation}
Here, $(t,x,y) \mapsto \BGamma^v_{(t,x)}(y) \in \R^{d \times d} $ is the matrix-valued map whose rows are the Wasserstein gradients of the components $v^i[\cdot](t,x)$ of the non-local velocity field at $\mu(t)$, i.e.
\begin{equation}
\label{eq:BGamma_Def}
\left( \BGamma^{v}_{(t,x)}(y) \right)_{i,j} = \Big( \nabla_{\mu} \left( v^i[\cdot](t,x) \right)(\mu(t))(y) \Big)_j,
\end{equation}
for any $i,j \in \{1,\dots,d\}$. 
\end{prop}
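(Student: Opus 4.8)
The plan is to follow the strategy of \cite{PMPWass}, where the single-perturbation version of this statement was obtained, and upgrade it to the present multidimensional setting; the linear structure $w_{\sigma} = \sum_k \sigma_k w_k$ will then come for free from the linearity of \eqref{eq:Differential_NonLocalFlow} in $\F_k$. Throughout, I would fix $s \in [0,T]$, write $\mu(\cdot)$ for the solution of the continuity equation with $\mu(s) = \mu$, set $\Phi_0(t,\cdot) := \Phi^v_{(s,t)}[\mu](\cdot)$, and for $e \in [-\epsilon,\epsilon]^N$ let $\nu_e := \G(e,\cdot)_{\#}\mu$, let $\mu_e(\cdot)$ be the solution with $\mu_e(s) = \nu_e$, and $\Phi_e(t,\cdot) := \Phi^v_{(s,t)}[\nu_e](\cdot)$, so that $\mu_e(t) = \Phi_e(t,\cdot)_{\#}\nu_e = (\Phi_e(t,\cdot) \circ \G(e,\cdot))_{\#}\mu$ by \eqref{eq:Representation_Formula}. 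From Theorem \ref{thm:Nonlocal_PDE}, the supports $\supp(\mu_e(t))$ stay in a fixed ball $\overline{B(0,R_T)}$ and the maps $\Phi_e(t,\cdot)$ are equi-Lipschitz and equibounded on $K$, uniformly in $e$ and $t \in [s,T]$; combining the $W_1$-contraction estimate of Theorem \ref{thm:Nonlocal_PDE} with hypothesis (i) of Proposition \ref{prop:Chainrule} (which gives $\| \G(e,\cdot)-\Id \|_{C^0} = O(|e|)$) yields $W_1(\mu_e(t),\mu(t)) \leq L_T W_1(\nu_e,\mu) = O(|e|)$ and, by a first Gronwall argument on the flow equations using \textnormal{\textbf{(H')}}, $\sup_{x \in K}|\Phi_e(t,x) - \Phi_0(t,x)| = O(|e|)$.

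Next I would check that the linear non-local Cauchy problem \eqref{eq:Differential_NonLocalFlow} is well-posed: its coefficient $\D_x v[\mu(t)](t,\Phi_0(t,\cdot))$ is bounded by $L_1$; the matrix kernel $\BGamma^v_{(t,\Phi_0(t,x))}(\Phi_0(t,y))$ has bounded rows, since each $\nabla_{\mu}(v^i[\cdot](t,x))(\mu(t))$ has $L^2(\mu(t))$-norm at most the metric slope $|\partial(v^i[\cdot](t,x))|(\mu(t)) \leq L_2$ by Remark \ref{rmk:WassersteinDiff} and hypothesis (ii) of \textnormal{\textbf{(H')}} (together with $W_1 \leq W_2$); and the source $\D_x\Phi_0(t,\cdot)\F_k(\cdot)$ is continuous, hence bounded, on $K$ by Proposition \ref{prop:Classical_Differential_flow} and assumption (iii). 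A Banach fixed-point argument in $C^0([s,T];C^0(K,\R^d))$ then provides a unique solution $w_k(\cdot,x)$ for each $k$, and by linearity $w_{\sigma}(t,x) := \sum_k \sigma_k w_k(t,x)$ solves \eqref{eq:Differential_NonLocalFlow} with $\F_{\sigma} = \sum_k \sigma_k \F_k$ in place of $\F_k$. The goal then becomes to show that $R_e(t,x) := \Phi_e(t,x) - \Phi_0(t,x) - \sum_k e_k w_k(t,x)$, which vanishes at $t=s$, satisfies $\sup_{t,x}|R_e(t,x)| = o(|e|)$.

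The heart of the argument is a second Gronwall estimate on $R_e$. Writing the flow equations and \eqref{eq:Differential_NonLocalFlow} in integral form and subtracting, I would split the increment of $v$ into a space part $v[\mu_e(\tau)](\tau,\Phi_e(\tau,x)) - v[\mu_e(\tau)](\tau,\Phi_0(\tau,x))$ and a measure part $v[\mu_e(\tau)](\tau,\Phi_0(\tau,x)) - v[\mu(\tau)](\tau,\Phi_0(\tau,x))$. The space part is treated by a mean-value expansion and the $C^1$-regularity of $v[\mu](t,\cdot)$ — together with the $W_1$-continuity of $\mu \mapsto \D_x v[\mu](t,\cdot)$ provided by the regularity hypotheses — and equals $\D_x v[\mu(\tau)](\tau,\Phi_0(\tau,x))\big(\sum_k e_k w_k(\tau,x) + R_e(\tau,x)\big) + o(|e|)$ uniformly. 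For the measure part, I would observe that $\mu_e(\tau) = \big(\Phi_0(\tau,\cdot) + \sum_k e_k g_k(\tau,\cdot) + R_e(\tau,\cdot) + o(|e|)\big)_{\#}\mu$ with $g_k(\tau,y) := \D_x\Phi_0(\tau,y)\F_k(y) + w_k(\tau,y)$, the $o(|e|)$ coming from the Fréchet expansions of $\G(e,\cdot)$ and of $\Phi_0(\tau,\cdot)$. Then I would introduce the \emph{explicit} measure $\tilde\mu_e(\tau) := \big(\Phi_0(\tau,\cdot) + \sum_k e_k g_k(\tau,\cdot)\big)_{\#}\mu$ — which does not involve the unknown $\Phi_e$ — and apply the chainrule of Proposition \ref{prop:Chainrule} componentwise to the functionals $v^i[\cdot](\tau,\Phi_0(\tau,x))$ along the affine family $e \mapsto \tilde\mu_e(\tau)$ (after transporting it back onto $\mu(\tau)$ via $\Phi_0(\tau,\cdot)$); in view of \eqref{eq:BGamma_Def} this produces exactly $\sum_k e_k$ times the non-local $\BGamma^v$-term of \eqref{eq:Differential_NonLocalFlow}, up to $o(|e|)$. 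Passing from $\tilde\mu_e(\tau)$ to $\mu_e(\tau)$ costs at most $L_2\big( \|R_e(\tau,\cdot)\|_{C^0(K)} + o(|e|) \big)$ by hypothesis (ii) of \textnormal{\textbf{(H')}} and $W_1 \leq W_2$. Collecting terms, $R_e$ satisfies $|\partial_t R_e(t,x)| \leq C\big( |R_e(t,x)| + \|R_e(t,\cdot)\|_{C^0(K)} \big) + o(|e|)$ with $o(|e|)$ uniform in $(t,x)$ and $R_e(s,\cdot) = 0$, whence Gronwall's lemma gives $\sup_{t \in [s,T],\, x \in K}|R_e(t,x)| = o(|e|)$. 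This is precisely the Fréchet-differentiability of $e \mapsto \Phi^v_{(s,t)}[\G(e,\cdot)_{\#}\mu](x)$ at $e=0$ with differential $\sigma \mapsto w_{\sigma}(t,x) = \sum_k \sigma_k w_k(t,x)$.

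The main obstacle is an apparent circularity: the first-order perturbation direction of $\mu_e(\tau)$ seems to require the very differentiability of $\Phi_e$ that one is trying to establish. I expect this to be resolved exactly as indicated above, by extracting the first-order term through the chainrule applied to the \emph{explicitly known} affine family $\tilde\mu_e(\tau)$ rather than to $\mu_e(\tau)$ itself, and by verifying that the unknown remainder $R_e$ enters the resulting bound only \emph{linearly} — both through the space mean-value expansion and through the $W_1$-Lipschitz comparison of $\mu_e(\tau)$ with $\tilde\mu_e(\tau)$ — so that the Gronwall inequality closes. A secondary technical point, to be handled with care, is the uniformity in $t \in [s,T]$ and $x \in K$ of all the $o(|e|)$ errors, which relies on the uniform regularity imposed on $v[\cdot](\cdot,\cdot)$ and on the uniform Fréchet-differentiability assumptions on $\G$.
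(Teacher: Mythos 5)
Your proof is correct and takes a genuinely different, more self-contained route than the paper. The paper disposes of Proposition~\ref{prop:Wasserstein_Differential_flow} in a few lines: it asserts Fr\'echet-differentiability of $e \mapsto \Phi^v_{(s,t)}[\G(e,\cdot)_{\#}\mu](x)$ by a combined appeal to Proposition~\ref{prop:Chainrule} and the hypotheses on $v$, expresses the differential in coordinates as $\sum_k \sigma_k \partial_{e_k}(\cdot)|_{e=0}$, and then cites \cite[Proposition~5]{PMPWass} for the identification of each one-dimensional partial derivative with the solution of \eqref{eq:Differential_NonLocalFlow}. You instead rebuild the full argument: well-posedness of the linearized system via Banach fixed-point, and then a Gronwall estimate on the remainder $R_e = \Phi_e - \Phi_0 - \sum_k e_k w_k$. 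Your key device --- applying the chainrule of Proposition~\ref{prop:Chainrule} to the \emph{explicitly known} affine family $\tilde\mu_e(\tau) = \big(\Phi_0(\tau,\cdot) + \sum_k e_k g_k(\tau,\cdot)\big)_{\#}\mu$, transported onto the base measure $\mu(\tau)$ via $\Phi_0(\tau,\cdot)^{-1}$, and then controlling $W_1(\mu_e(\tau),\tilde\mu_e(\tau)) \lesssim \|R_e(\tau,\cdot)\|_{C^0}$ through \textnormal{\textbf{(H')}}(ii) --- is exactly what lets the Gronwall loop close, and it establishes Fr\'echet-differentiability of the $\R^d$-valued flow map and the identification of its differential \emph{simultaneously}, rather than assuming the former (the paper's terse invocation of Proposition~\ref{prop:Chainrule}, a statement about real-valued functionals, does not literally justify differentiability of an $\R^d$-valued map). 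What the paper's route buys is brevity by leaning on prior work; what yours buys is a self-contained, fully explicit argument that in particular exposes why the perturbation estimate only depends linearly on the unknown remainder.

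Two small caveats on hypotheses: your mean-value expansion of the space increment uses $W_1$-continuity of $\mu \mapsto \D_x v[\mu](t,\cdot)$, and the uniform Wasserstein-differentiability of $\mu \mapsto v^i[\mu](t,x)$ must in fact hold for $(t,x) \in [0,T] \times \overline{B(0,R_T)}$ rather than only for $x \in K$, since $\Phi_0(\tau,x)$ leaves $K$ for $\tau > s$. Neither is listed verbatim among the hypotheses of Proposition~\ref{prop:Wasserstein_Differential_flow}, but both are encompassed by \textnormal{\textbf{(H3)}} in the main theorem where the proposition is applied, so the uses are harmless in context --- it is worth stating them explicitly when the argument is run.
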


\begin{proof}
By Proposition \ref{prop:Chainrule} and as a consequence of our hypotheses on $v[\cdot](\cdot,\cdot)$, we know that the map $e \in [-\epsilon,\epsilon]^N \mapsto \Phi^{v}_{(s,t)}[\G(e,\cdot)_{\#} \mu](x)$ is Fr\'echet-differentiable at $e=0$. Therefore, the action of its differential on a given direction $\sigma \in [-\epsilon,\epsilon]^N$ can be expressed in coordinates using partial derivatives, i.e.
\begin{equation*}
w_{\sigma}(t,x) = \sum_{k=1}^N \sigma_k \partial_{e_k} \left( \Phi^v_{(s,t)}[\G(e,\cdot)_{\#}\mu](x) \right)(0). 
\end{equation*}
Moreover, it has been proven in \cite[Proposition 5]{PMPWass} that such one-dimensional variations could be characterized as the unique solution of the linearized Cauchy problems \eqref{eq:Differential_NonLocalFlow}. 
\end{proof}


\subsection{Non-smooth multiplier rule and differentiable extension of functions}
\label{subsection:MP_Multiplier}

In this section, we recall some facts of non-smooth analysis as well as a non-smooth Lagrange multiplier rule which is instrumental in the proof of our main result. This multiplier rule is expressed in terms of the so-called \textit{Michel-Penot subdifferential}, see e.g. \cite{MichelPenot,Ioffe}. In the sequel, we denote by $(X, \Norm{\cdot}_X)$ a separable Banach space and by $X^*$ its topological dual associated with the duality bracket $\langle \cdot,\cdot \rangle_X$. Given a map $f : X \rightarrow \R$, we denote by $D(f) = \{ x \in X ~\text{s.t.}~ f(x) < +\infty \}$ its effective domain. 

\begin{Def}[Michel-Penot subdifferential]
Given a map $f : X \rightarrow \R$, the \textnormal{Michel-Penot subdifferential} (MP-subdifferential in the sequel) of $f(\cdot)$ at some $x \in D(f)$ is defined by 
\begin{equation*}
\partial_{\MP} f (x) = \Big\{ \xi \in X^* ~\text{s.t.}~ \langle \xi , h \rangle_X \leq d_{\MP}f(x;h) ~ \text{for all $h \in X$} \Big\},
\end{equation*}
where 
\begin{equation*}
d_{\MP}f(x \, ;h) = \sup\limits_{e \in X} \limsup\limits_{t \downarrow 0} \left[ \frac{f(x+t(e+h)) - f(x+te)}{t} \right]
\end{equation*}
denotes the so-called \textnormal{Michel-Penot derivative} of $f(\cdot)$ at $x$ in the direction $h$. Moreover, if $f : X \rightarrow \R$ is locally convex around $x \in X$, then its Michel-Penot and convex subdifferentials coincide, i.e. $\partial_{\MP} f(x) = \partial f(x)$.
\end{Def}

The MP-subdifferential -- smaller than the Clarke subdifferential -- bears the nice property of shrinking to a singleton whenever the functional $f(\cdot)$ is merely Fr\'echet-differentiable. It also enjoys a summation rule and a chained-derivative formula for compositions of locally Lipschitz  and Fr\'echet-differentiable maps. We list these properties in the following proposition.

\begin{prop}[Properties of the Michel-Penot subdifferentials]
\label{prop:Prop_MPsubdiff}
Let $x \in X$, $f,g : X \rightarrow \R$ and $\G : \R^N \rightarrow X$. 
\begin{enumerate}
\item[(a)] If $f(\cdot)$ is Fr\'echet-differentiable at $x$, then $\partial_{\MP}f(x) = \{ \nabla f (x) \}$.
\item[(b)] If $d_{\MP} f(x \, ;h) < +\infty$ and $d_{\MP} g(x \, ;h) < +\infty$ for any $h \in X$, it holds that
\begin{equation*}
\partial_{\MP} (f+g)(x) \subseteq \Big( \partial_{\MP}f(x) + \partial_{\MP}g(x) \Big).
\end{equation*}
\item[(c)] If $\G(\cdot)$ is Fr\'echet-differentiable at $0 \in \R^N$ and $f(\cdot)$ is Lipschitz in a neighbourhood of $\G(0)$, one has that
\begin{equation}
\label{eq:MP_Inclusion}
d_{\MP} (f \circ \G) (0 \, ;\sigma) = \langle \xi , \D \G(0) \sigma \rangle_X, 
\end{equation}
for some $\xi \in \partial_{\MP} f(\G(0))$. In other words, $\partial_{\MP}(f \circ \G)(0) \subseteq \D \G(0)^* \circ \partial_{\MP} f(\G(0))$.
\end{enumerate}
\end{prop}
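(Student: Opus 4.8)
The plan is to treat the three items one at a time, each reducing to an elementary manipulation of the Michel--Penot difference quotient together with a single application of the Hahn--Banach theorem. For item (a), I would insert the Fréchet expansion $f(x+z) = f(x) + \langle \nabla f(x),z\rangle_X + o(\Norm{z}_X)$ into the quotient $\big(f(x+t(e+h)) - f(x+te)\big)/t$: the constant terms cancel, the remainders collapse, and the quotient becomes $\langle \nabla f(x),h\rangle_X + o(1)$ with an $o(1)$ that does not depend on $e$, so that $d_{\MP}f(x;h) = \langle \nabla f(x),h\rangle_X$. Since this MP-derivative is \emph{linear} in $h$, the defining inequality $\langle \xi,h\rangle_X \le d_{\MP}f(x;h)$ applied to $h$ and to $-h$ forces $\xi = \nabla f(x)$, which yields $\partial_{\MP}f(x) = \{ \nabla f(x) \}$.

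For item (b), the starting point is that $h \mapsto d_{\MP}f(x;h)$ is positively homogeneous and subadditive, hence sublinear (and finite by hypothesis): subadditivity follows because a $\limsup$ of a sum is dominated by the sum of the $\limsup$s, and the supremum over $e \in X$ is preserved after the harmless shift $e \mapsto e+h_2$ in one of the two summands. This gives at once $d_{\MP}(f+g)(x;h) \le d_{\MP}f(x;h) + d_{\MP}g(x;h)$ for every $h$. Then, given $\xi \in \partial_{\MP}(f+g)(x)$, the functional $r(h) := \langle \xi,h\rangle_X - d_{\MP}g(x;h)$ is superlinear and is dominated pointwise by the sublinear functional $p(h):= d_{\MP}f(x;h)$; the sandwich form of the Hahn--Banach theorem produces a linear $\xi_f$ with $r \le \xi_f \le p$, which is bounded since $p$ is Lipschitz, so that $\xi_f \in \partial_{\MP}f(x)$ and $\xi - \xi_f \in \partial_{\MP}g(x)$, which is the asserted inclusion.

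For item (c), I would first localize and linearize. Since $f$ is $L$-Lipschitz near $\G(0)$ and $\G(\eta) = \G(0) + \D\G(0)\eta + o(|\eta|)$, replacing $\G(t(\eta+\sigma))$ and $\G(t\eta)$ by their first-order expansions changes the difference quotient by at most $L\, o(t)/t = o(1)$. Writing $y = \G(0)$, $h_\sigma = \D\G(0)\sigma$, and letting $e := \D\G(0)\eta$ range over the finite-dimensional subspace $V := \D\G(0)(\R^N)$ as $\eta$ ranges over $\R^N$, one gets
\[
d_{\MP}(f\circ\G)(0;\sigma) = \sup_{e \in V} \limsup_{t \downarrow 0} \frac{f(y + t(e+h_\sigma)) - f(y+te)}{t} =: d_V f(y;h_\sigma).
\]
The map $h \mapsto d_V f(y;h)$ is sublinear and bounded by $L\Norm{h}_X$ on the finite-dimensional space $V$, hence is the support function of a compact convex subset of $V^*$; choosing $\xi_V$ in that subset at which the support function attains $d_V f(y;h_\sigma)$ gives $\langle \xi_V,h_\sigma\rangle = d_V f(y;h_\sigma)$ with $\xi_V \le d_V f(y;\cdot) \le d_{\MP}f(y;\cdot)$ on $V$. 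Extending $\xi_V$ by Hahn--Banach to a functional $\xi$ on $X$ dominated by the Lipschitz sublinear functional $d_{\MP}f(y;\cdot)$ produces $\xi \in \partial_{\MP}f(\G(0))$ with $\langle \xi, \D\G(0)\sigma\rangle_X = d_{\MP}(f\circ\G)(0;\sigma)$, which is exactly \eqref{eq:MP_Inclusion}; comparing support functions then yields $\partial_{\MP}(f\circ\G)(0) \subseteq \D\G(0)^* \circ \partial_{\MP}f(\G(0))$.

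The step I expect to require the most care is the localization in (c): one must keep track that the $o(t)$-errors coming from the Fréchet-differentiability of $\G$ depend on $\eta$ and $\sigma$ but not on $t$, so that they are absorbed inside the $\limsup$ before the supremum over $e \in V$ is formed, and one must observe that the supremum over the image subspace $V$ coincides with the original supremum over $\eta \in \R^N$ because the quotient depends on $\eta$ only through $\D\G(0)\eta$. Everything else reduces to the sublinearity of MP-derivatives and to standard (sandwich/extension) forms of the Hahn--Banach theorem.
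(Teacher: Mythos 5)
Your proof is correct on all three items, and it in fact supplies exactly what the paper leaves out: the paper does not prove Proposition~\ref{prop:Prop_MPsubdiff} at all, it merely remarks that the properties ``can be proven easily by computing explicitly the Michel-Penot derivatives of the corresponding maps and using the definition of the set $\partial_{\MP}(\bullet)$'' and points to \cite{NonsmoothPMPNeedle2}. Your item (a) is precisely the kind of direct computation the paper alludes to (the Fr\'echet expansion makes the $e$-dependence vanish in the limit, and linearity of the resulting MP-derivative forces the subdifferential to be the singleton $\{\nabla f(x)\}$). Items (b) and (c) are handled by the standard route: sublinearity of $d_{\MP}$ together with a Hahn--Banach argument -- the sandwich form for the sum rule, and restriction to the finite-dimensional image subspace $V = \D\G(0)(\R^N)$ followed by a Hahn--Banach extension for the chain rule. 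Your localisation step in (c) (replace $\G$ by its affine approximation, absorb the $o(t)$-error by Lipschitzianity of $f$, pass the error through the $\limsup$ before taking the supremum over $e$) is correct, and the closing ``comparison of support functions'' gives the set inclusion because $\D\G(0)^{*}\partial_{\MP}f(\G(0))$ is convex and compact in $\R^N$.

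Two remarks, one substantive and one cosmetic, both on (b). Substantively, the stated hypothesis of (b) is only that $d_{\MP}f(x;h)$ and $d_{\MP}g(x;h)$ are finite for every $h$; a finite sublinear functional on an infinite-dimensional Banach space need not be bounded on the unit ball, so the linear functional $\xi_f$ produced by the sandwich theorem is not automatically in $X^{*}$. Your aside that $p = d_{\MP}f(x;\cdot)$ ``is Lipschitz'' quietly strengthens the hypothesis: it holds when $f$ is calm at $x$ (so that $d_{\MP}f(x;h)\le L\Norm{h}_X$), which is indeed the only setting in which the paper invokes Proposition~\ref{prop:Prop_MPsubdiff}-(b), but it does not follow from mere finiteness. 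It would be cleaner to state the needed bound explicitly. Cosmetically, the shift $e\mapsto e+h_2$ is what proves sub-additivity of $h\mapsto d_{\MP}f(x;h)$; the inequality $d_{\MP}(f+g)(x;h)\le d_{\MP}f(x;h)+d_{\MP}g(x;h)$ instead follows without any shift from $\limsup(a_t+b_t)\le\limsup a_t+\limsup b_t$ and $\sup_e(A(e)+B(e))\le\sup_eA(e)+\sup_eB(e)$. Your sentence blends the two justifications, though both facts are true and both are used (the sub-additivity in $h$ to know $p$ is sublinear, the sum inequality to get the domination $r\le p$).
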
 

These properties can be proven easily by computing explicitly the Michel-Penot derivatives of the corresponding maps and using the definition of the set $\partial_{\MP}(\bullet)$, see e.g. \cite{NonsmoothPMPNeedle2}. Another useful feature of this notion of subdifferential is that it allows to write Lagrange multiplier rules for locally Lipschitz functions. This family of optimality conditions was initially derived in \cite{Ioffe} and refined in \cite{NonsmoothPMPNeedle2} where the author extended the result to the class of so-called \textit{calm} functions. 

\begin{Def}[Calm functions]
A map $f : X \rightarrow \R$ is \textnormal{calm} at $x \in X$ provided that the following holds. 
\begin{enumerate}
\item[(i)] There exists a constant $L > 0$ such that for any $\delta \in X$ with $\Norm{\delta}_X$ sufficiently small, it holds that 
\begin{equation*}
\Norm{f(x+\delta) - f(x)}_X \, \leq \, L \Norm{\delta}_X.
\end{equation*}
\item[(ii)] $d_{\MP} f(x \,; h) < +\infty$ for any $h \in X$.
\end{enumerate}
\end{Def}

\begin{thm}[Multiplier rule for Michel-Penot subdifferentials]
\label{thm:Lagrange_MP}
Let $f_0,\dots,f_n,g_1,\dots,g_m : X \rightarrow \R$ and $\Omega \subset X$ be a closed and convex set. Suppose that $x_*$ is a local solution of the non-linear optimization problem
\begin{equation*}
\left\{
\begin{aligned}
\min\limits_{x \in \Omega} & \, \big[ f_0(x) \big] \\
\text{s.t.} ~ & \left\{ 
\begin{aligned}
& f_i(x) \leq 0 ~~ & \text{for all $i \in \{1,\dots,n\}$}, \\
& g_j(x) = 0 ~~ & \text{for all $j \in \{1,\dots,m\}$}, \\
\end{aligned}
\right.
\end{aligned}
\right.
\end{equation*}
and that the maps $f_0(\cdot),\dots,f_n(\cdot),g_1(\cdot),\dots,g_m(\cdot)$ are calm at $x_*$. Then, there exist Lagrange multipliers $(\lambda_0,\dots,\lambda_n,\eta_1,\dots,\eta_m) \in \{0,1\} \times \R_+^n \times \R^m$ such that the following \textit{stationarity} \textnormal{(S)}, \textit{non-triviality} \textnormal{(NT)} and \textit{complementary-slackness} \textnormal{(CS)} conditions hold
\begin{equation*}
\left\{
\begin{aligned}
& 0 \in \partial_{\MP} \Big( \lambda_0 f_0(\cdot) + \sum\limits_{i = 1}^n \lambda_i f_i(\cdot) + \sum\limits_{j = 1}^m \eta_j g_j(\cdot) \Big)(x_*) + \Ncal(\Omega,x_*) , ~~ & \textnormal{(S)} \\
& \lambda_0 + \sum\limits_{i = 1}^n \lambda_i + \sum\limits_{j = 1}^m |\eta_j| = 1, ~~ & \textnormal{(NT)} \\
& \lambda_i f_i(x_*) = 0 ~~ \text{for all $i \in \{ 1,\dots,n\}$}, ~~ & \textnormal{(CS)} \\
\end{aligned}
\right.
\end{equation*}
where $\Ncal(\Omega,x_*)$ denotes the normal cone of convex analysis to $\Omega$ at $x_*$.
\end{thm}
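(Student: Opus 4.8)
The plan is to combine an exact penalisation of the constraints \emph{in value space} with Ekeland's variational principle, and then to read off the multipliers from a first-order condition at the Ekeland points, passing to the limit only at the very end. Throughout, recall that ``calm'' entails local Lipschitzness, hence continuity, of each $f_i$ and $g_j$.

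First I would discard the inactive inequality constraints: for every index $i$ with $f_i(x_*) < 0$ I will eventually set $\lambda_i = 0$, which makes \textnormal{(CS)} automatic there and, by continuity, guarantees that near $x_*$ any failure of feasibility is caused by an \emph{active} inequality or by an equality constraint. For a small $\epsilon > 0$, introduce
\begin{equation*}
F_\epsilon(x) \, = \, \Big[ \big( (f_0(x) - f_0(x_*) + \epsilon)^+ \big)^2 + \sum_{i=1}^n \big( (f_i(x))^+ \big)^2 + \sum_{j=1}^m g_j(x)^2 \Big]^{1/2} \, = \, \phi\big( H_\epsilon(x) \big),
\end{equation*}
where $\phi$ is the Euclidean norm on $\R^{1+n+m}$ and $H_\epsilon$ collects the bracketed components. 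Then $F_\epsilon(x_*) = \epsilon$, whereas $F_\epsilon(x) = 0$ for $x \in \Omega$ near $x_*$ would force $x$ to be feasible with $f_0(x) \leq f_0(x_*) - \epsilon$, contradicting the local optimality of $x_*$; hence $x_*$ is an $\epsilon$-approximate minimiser of $F_\epsilon$ over $\Omega$ in a fixed neighbourhood $\overline{B(x_*,\delta)}$.

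Applying Ekeland's variational principle on the complete metric space $\overline{B(x_*,\delta)} \cap \Omega$ produces, for $\epsilon$ small, a point $x_\epsilon \in \Omega$ with $\Norm{x_\epsilon - x_*}_X \leq \sqrt{\epsilon}$ which \emph{minimises} $x \mapsto F_\epsilon(x) + \sqrt{\epsilon}\,\Norm{x - x_\epsilon}_X$ over $\Omega$ near $x_*$. Since $\Omega$ is closed and convex, a Fermat rule for the MP-subdifferential over convex sets --- obtained from the definition of $\partial_{\MP}$ and a Hahn--Banach argument, using that the MP-derivative of the minimand is $\geq 0$ on the tangent directions $\overline{\mathrm{cone}}(\Omega - x_\epsilon)$ --- together with the sum rule of Proposition \ref{prop:Prop_MPsubdiff}(b) (the norm being convex, its MP-subdifferential lies in the closed unit ball $B_{X^*}$) yields $0 \in \partial_{\MP} F_\epsilon(x_\epsilon) + \sqrt{\epsilon}\, B_{X^*} + \Ncal(\Omega,x_\epsilon)$. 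Because $H_\epsilon(x_\epsilon) \neq 0$ (as $F_\epsilon(x_\epsilon) > 0$), the norm $\phi$ is $C^1$ at $H_\epsilon(x_\epsilon)$, so a first-order expansion of $\phi$ composed with the Lipschitz map $H_\epsilon$ gives $d_{\MP}F_\epsilon(x_\epsilon;\cdot) = d_{\MP}\big( \langle c^\epsilon, H_\epsilon(\cdot)\rangle \big)(x_\epsilon;\cdot)$ with $c^\epsilon = \nabla\phi(H_\epsilon(x_\epsilon))$, $|c^\epsilon| = 1$; writing $c^\epsilon = (\lambda_0^\epsilon,\dots,\lambda_n^\epsilon,\eta_1^\epsilon,\dots,\eta_m^\epsilon)$ and applying the sum rule once more (the $(\cdot)^+$'s forcing $\lambda_0^\epsilon, \lambda_i^\epsilon \geq 0$, with the corresponding term collapsing to $\{0\}$ exactly when its coefficient vanishes) leads to
\begin{equation*}
0 \, \in \, \lambda_0^\epsilon\, \partial_{\MP} f_0(x_\epsilon) + \sum_{i=1}^n \lambda_i^\epsilon\, \partial_{\MP} f_i(x_\epsilon) + \sum_{j=1}^m \eta_j^\epsilon\, \partial_{\MP} g_j(x_\epsilon) + \sqrt{\epsilon}\, B_{X^*} + \Ncal(\Omega,x_\epsilon).
\end{equation*}

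It remains to let $\epsilon \downarrow 0$. Then $x_\epsilon \to x_*$, and since $c^\epsilon$ lies on the unit sphere of $\R^{1+n+m}$ a subsequence converges to some $(\lambda_0,\dots,\lambda_n,\eta_1,\dots,\eta_m)$ with $\lambda_0,\dots,\lambda_n \geq 0$ and $(\lambda_0,\dots,\eta_m) \neq 0$; renormalising in $\ell^1$ --- and separating the normal case (where $\lambda_0$ may be taken equal to $1$) from the abnormal one ($\lambda_0 = 0$) --- gives \textnormal{(NT)}, while $\lambda_i = 0$ for the inactive $i$ gives \textnormal{(CS)}. Passing to the limit in the last inclusion, using $\sqrt{\epsilon}\, B_{X^*} \to \{0\}$ and the closure properties of the Michel--Penot subdifferential along $x_\epsilon \to x_*$, and then folding the finitely many limiting subgradients back into a single subdifferential via the sum rule, yields \textnormal{(S)}. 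I expect the main obstacle to be precisely this limiting step: unlike the Clarke subdifferential, $\partial_{\MP}$ does not have a robust closed graph, so controlling what survives as $x_\epsilon \to x_*$ is where the calmness of the data is genuinely used --- and this is also why the equality constraints cannot be handled by a naive max-function (a $\max$ containing both $g_j$ and $-g_j$ satisfies the Fermat condition trivially, destroying non-triviality), which forces the value-space/Ekeland route above. The remaining bookkeeping --- getting the multipliers into the exact form $\{0,1\}\times\R_+^n\times\R^m$ with the stated normalisation, and rewriting \textnormal{(S)} with one combined subdifferential --- is routine once the limit is secured.
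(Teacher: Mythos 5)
The paper does not prove this multiplier rule itself: it cites it from \cite{Ioffe} and its refinement to calm functions in \cite{NonsmoothPMPNeedle2}, so there is no internal proof to compare against. That said, two concrete defects make your Ekeland--plus--value-space--penalisation route inconclusive. First, your opening parenthetical is false: in this paper ``calm at $x_*$'' is a \emph{one-point} condition (a bound on $|f(x_*+\delta)-f(x_*)|$ in terms of $\Norm{\delta}_X$, plus finiteness of $d_{\MP}f(x_*;\cdot)$), and it does \emph{not} imply local Lipschitzness or even continuity at nearby points. The Ekeland argument, however, lives entirely at the perturbed points $x_\epsilon \neq x_*$: the Fermat rule, the sum rule of Proposition~\ref{prop:Prop_MPsubdiff}(b), and the chain rule for $\phi\circ H_\epsilon$ all require the data to be calm or Lipschitz \emph{at} $x_\epsilon$, about which the hypotheses say nothing. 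So the very step that produces the approximate stationarity inclusion is not justified under the stated assumptions.

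Second, even if one strengthens the hypotheses to local Lipschitzness near $x_*$, you have yourself flagged the decisive obstruction, and it is fatal as stated: the final passage to the limit in the inclusion
\begin{equation*}
0 \in \lambda_0^\epsilon\, \partial_{\MP} f_0(x_\epsilon) + \sum_{i=1}^n \lambda_i^\epsilon\, \partial_{\MP} f_i(x_\epsilon) + \sum_{j=1}^m \eta_j^\epsilon\, \partial_{\MP} g_j(x_\epsilon) + \sqrt{\epsilon}\, B_{X^*} + \Ncal(\Omega,x_\epsilon)
\end{equation*}
requires outer/graph closedness of the set-valued map $x \mapsto \partial_{\MP}f(x)$ along $x_\epsilon \to x_*$, which the Michel--Penot subdifferential simply does not enjoy. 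The standard way to close an Ekeland argument of this type is to replace $\partial_{\MP}$ by the Clarke subdifferential, whose graph \emph{is} weak-$^*$ closed; but since $\partial_{\MP}f(x) \subseteq \partial_C f(x)$ with strict inclusion in general, that limit would prove only the weaker, Clarke version of \textnormal{(S)}, not the statement. Without an additional mechanism that either (a) keeps the first-order condition anchored at $x_*$ itself --- as the proofs in the cited references do, via a separation/theorem-of-the-alternative argument using only $d_{\MP}(\cdot)(x_*;\cdot)$ --- or (b) establishes a specific closure property of $\partial_{\MP}$ tailored to the penalised functions, the argument stops short of the claim. Your remarks about why a naive max-function cannot replace the value-space penalisation for the equality constraints are correct, but they point to a difficulty of \emph{any} elementary route, not to a resolution of this one.
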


We end this introductory section by stating a Lusin-type lemma for vector-valued functions and a derivative-preserving continuous extension result that will both prove to be useful in the sequel. We refer the reader e.g. to \cite{DiestelUhl} for notions on Bochner integrals and abstract integration in Banach spaces. 

\begin{lem}[Pointwise convergence and restriction]
\label{lem:Pointwise_Convergence}
Let $f : [0,T] \rightarrow X$ be an $L^1$-function in the sense of Bochner and $\T$ be any subset of $[0,T]$ with full Lebesgue measure. Then, there exist $\Acal,\Mcal \subset \T$ respectively with null and full Lebesgue measure  satisfying the property that for any $\tau \in \Mcal$, there exists $(\tau_k) \subset \Acal$ such that 
\begin{equation*}
\tau_k \underset{k \rightarrow + \infty}{\longrightarrow} \tau \qquad \text{and} \qquad \|f(\tau) - f(\tau_k) \|_X \underset{k \rightarrow +\infty}{\longrightarrow} 0.
\end{equation*}
\end{lem}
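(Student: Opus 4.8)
The plan is to combine Lusin's theorem for Bochner-measurable functions with the elementary fact that a compact subset of $\R$ is a separable metric space. The underlying observation is that a Lebesgue-null set may nonetheless be \emph{topologically dense} in a set of positive measure, and that continuity of $f$ on such a set upgrades topological approximation to convergence of the values $f(\tau_k)$. Throughout we fix once and for all a strongly measurable representative of the $L^1$-class of $f$, so that pointwise values $f(\tau)$ make sense.

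\textbf{Exhausting $\T$ by compacta on which $f$ is continuous.} Fix a sequence $\epsilon_n \downarrow 0$. Since $\T$ has full measure (hence is Lebesgue measurable) and $f$ is strongly measurable into the separable space $X$, for each $n$ the classical Lusin theorem yields a compact set $\tilde K_n \subseteq [0,T]$ with $|[0,T] \setminus \tilde K_n| < \epsilon_n/2$ and $f|_{\tilde K_n}$ continuous, while inner regularity of the Lebesgue measure yields a compact set $C_n \subseteq \T$ with $|[0,T] \setminus C_n| = |\T \setminus C_n| < \epsilon_n/2$. Then $K_n := \tilde K_n \cap C_n$ is compact, contained in $\T$, $f|_{K_n}$ is continuous, and $|[0,T] \setminus K_n| < \epsilon_n$. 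Set $\Mcal := \bigcup_{n \geq 1} K_n \subseteq \T$. Since $[0,T] \setminus \Mcal \subseteq [0,T] \setminus K_n$ for every $n$, the set $\Mcal$ has full Lebesgue measure.

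\textbf{Thinning each compactum.} Each $K_n$ is a compact, hence separable, metric space, so it admits a countable dense subset $D_n \subseteq K_n$. Put $\Acal := \bigcup_{n \geq 1} D_n$. Then $\Acal \subseteq \bigcup_n K_n = \Mcal \subseteq \T$ is countable, so $|\Acal| = 0$; if one additionally wants $\Acal$ and $\Mcal$ to be disjoint, it suffices to replace $\Mcal$ by $\Mcal \setminus \Acal$, which is still of full measure.

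\textbf{Verification and the delicate point.} Given $\tau \in \Mcal$, choose $n$ with $\tau \in K_n$. By density of $D_n$ in $K_n$ there is a sequence $(\tau_k) \subseteq D_n \subseteq \Acal$ with $\tau_k \to \tau$, and since $f|_{K_n}$ is continuous at $\tau \in K_n$ we obtain $\|f(\tau) - f(\tau_k)\|_X \to 0$, as required. (If $\tau$ happens to be isolated in $K_n$, density forces $\tau \in D_n$ and one may take the constant sequence $\tau_k \equiv \tau$; if genuinely non-constant approximating sequences are wanted, one intersects each $K_n$ beforehand with its set of Lebesgue density points, which is of full measure in $K_n$ by the Lebesgue density theorem and contains no isolated points.) The only real subtlety is the simultaneous requirement that $\Acal$ be \emph{null} yet still reach every point of a full-measure set through $f$-convergent sequences; this is handled by decoupling the two roles via Lusin — measure is controlled through the $K_n$, topology through the countable dense sets $D_n$ — and by performing the Lusin reduction inside $\T$ itself through the inner-regularity intersection trick. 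Everything else is routine.
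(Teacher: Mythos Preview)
Your proof is correct and follows essentially the same approach as the paper, which merely cites Lusin's theorem for vector-valued maps together with \cite[Lemma 4.1]{NonsmoothPMPNeedle2} without giving details. Your argument is in fact a self-contained unpacking of that one-line reference: the Lusin exhaustion by compacta $K_n\subset\T$, the countable dense subsets $D_n$ playing the role of $\Acal$, and the continuity of $f|_{K_n}$ upgrading density to $f$-convergence are exactly the ingredients the paper is alluding to.
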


\begin{proof}
This result is a consequence of \cite[Lemma 4.1]{NonsmoothPMPNeedle2} along with Lusin's Theorem for vector valued maps.
\end{proof}

\begin{lem}[A continuous extension preserving the derivative]
\label{lem:Continuous_Extension}
Let $\epsilon > 0$ and $f : [0,\epsilon]^N \rightarrow X$ be a continuous map  differentiable at $e = 0$ relatively to $\R^N_+$. Then, there exists a continuous extension $\tilde{f} : [-\bar{\epsilon}_N,\bar{\epsilon}_N]^N \rightarrow X$ of $f(\cdot)$ which is Fr\'echet-differentiable at $e = 0$ and such that $\D_e \tilde{f}(0) = \D_e f(0)$. 
\end{lem}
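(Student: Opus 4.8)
The plan is to peel off the linear part of $f$ at the origin and extend only the remainder, using the coordinatewise positive-part map as a $1$-Lipschitz retraction of the full cube onto the positive orthant.

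Write $L := \D_e f(0)$, which is a well-defined bounded linear operator from $\R^N$ to $X$ since $\R^N_+$ linearly spans $\R^N$ and $\R^N$ is finite-dimensional. By the definition of differentiability of $f$ at $e=0$ relatively to $\R^N_+$, the remainder $r : [0,\epsilon]^N \rightarrow X$ given by $r(e) := f(e) - f(0) - Le$ is continuous, satisfies $r(0) = 0$, and is $o(|e|)$ as $e \rightarrow 0$ inside $\R^N_+$; that is, for every $\delta > 0$ there exists $\eta_\delta > 0$ such that $\Norm{r(e)}_X \leq \delta |e|$ whenever $e \in [0,\epsilon]^N$ and $|e| \leq \eta_\delta$.

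Set $\bar{\epsilon}_N := \epsilon$ and define the coordinatewise positive-part map $\rho : [-\epsilon,\epsilon]^N \rightarrow [0,\epsilon]^N$ by $\rho(e) := (e_1^+,\dots,e_N^+)$. Then $\rho$ is continuous, $1$-Lipschitz, satisfies $\rho(0) = 0$ and $|\rho(e)| \leq |e|$ for all $e$, and restricts to the identity on $[0,\epsilon]^N$. We then set
\[
\tilde{f}(e) := f(0) + Le + r(\rho(e)) = f(\rho(e)) + L\big( e - \rho(e) \big), \qquad e \in [-\epsilon,\epsilon]^N.
\]
Continuity of $\tilde{f}$ is immediate from the continuity of $f$, $L$ and $\rho$, and since $\rho = \Id$ on $[0,\epsilon]^N$ one has $\tilde{f}(e) = f(0) + Le + r(e) = f(e)$ there, so $\tilde f$ genuinely extends $f$. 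For Fréchet-differentiability at $e=0$, note that $\tilde{f}(e) - \tilde{f}(0) - Le = r(\rho(e))$; given $\delta > 0$, whenever $|e| \leq \eta_\delta$ we have $|\rho(e)| \leq \eta_\delta$ as well, hence $\Norm{r(\rho(e))}_X \leq \delta |\rho(e)| \leq \delta |e|$. Thus $\tilde{f}(e) - \tilde{f}(0) - Le = o(|e|)$, proving that $\tilde{f}$ is Fréchet-differentiable at $0$ with $\D_e \tilde{f}(0) = L = \D_e f(0)$.

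The only delicate point — and the reason a naive reflection of $f$ across the coordinate hyperplanes fails — is that the extension must not inject a spurious first-order term, whereas reflection flips the sign of the corresponding components of the derivative. Composing with a retraction onto $\R^N_+$ that fixes the origin, is $1$-Lipschitz, and equals the identity on the original domain avoids this defect: the estimate $\Norm{r(\rho(e))}_X \le \delta|\rho(e)| \le \delta|e|$ survives precisely because $\rho$ is $1$-Lipschitz and fixes $0$, so the remainder stays $o(|e|)$.
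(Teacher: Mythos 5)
Your proof is correct, and it takes a genuinely different route from the paper's. Both you and the paper begin by subtracting the affine approximation $e \mapsto f(0) + \D_e f(0)\, e$ and focusing on extending the remainder, but the extension mechanism differs. The paper normalizes the remainder to $g(e) = |e|^{-1}\bigl(f(e) - f(0) - \D_e f(0)\, e\bigr)$, continuously extends it to $g(0)=0$, invokes Dugundji's extension theorem to extend $g$ to a continuous $\tilde g$ on all of $\R^N$, and then multiplies back by $|e|$. Your proof instead composes the remainder $r$ with the explicit $1$-Lipschitz retraction $\rho(e) = (e_1^+,\dots,e_N^+)$ onto the positive orthant. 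This is a more elementary and fully constructive argument: the key estimate $\|r(\rho(e))\|_X \leq \delta|\rho(e)| \leq \delta|e|$ follows immediately from $\rho(0)=0$ and $1$-Lipschitzness, with no appeal to an abstract extension theorem. The paper's route is slightly more general in spirit — Dugundji would handle remainders defined on domains other than an orthant, where no such explicit retraction is available — but for the cube $[0,\epsilon]^N$ your construction is cleaner and avoids the nontrivial citation. You also correctly flag the trap of a naive reflection across the coordinate hyperplanes, which would change the sign of the relevant directional derivatives; the retraction sidesteps this precisely because it collapses the negative directions onto the origin rather than flipping them.
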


\begin{proof}
We adapt here a simple proof that can be found e.g. in \cite[Lemma 2.11]{NonsmoothPMPNeedle2}. Define the map
\begin{equation*}
g : e \in \R_+^N \mapsto \frac{1}{|e|} \Big(f(e) - f(0) - \D_e f(0) e \Big) \in X.
\end{equation*}
By definition, $g(\cdot)$ is continuous over $\R_+^N \backslash \{0\}$ and can be extended to $\R_+^N$ by imposing that $g(0)=0$ since $f(\cdot)$ is differentiable at $e = 0$ relatively to $\R^N_+$. Invoking Dugundji's extension theorem (see \cite{Dugundji}), we can define a continuous extension $\tilde{g}(\cdot)$ of $g(\cdot)$ on the whole of $\R^N$. 

We now define the auxiliary map $\tilde{f} : e \in \R^N \mapsto f(0) + \D_e f(0) e + |e| \tilde{g}(e)$. By construction, $\tilde{f}(\cdot)$ is continuous and coincides with  $f(\cdot)$ over $\R_+^N$. Moreover, one has for any $e \in \R^N$ that 
\begin{equation*}
\tilde{f}(e) - \tilde{f}(0) = \D_e f(0) e + |e| \tilde{g}(e) = \D_e f(0) e + o(|e|)
\end{equation*}
by continuity of $\tilde{g}(\cdot)$ at $0$. Therefore, the map $\tilde{f}(\cdot)$ is Fr\'echet-differentiable at $e = 0$ with $\D_e \tilde{f}(0) = \D_e f(0)$.  
\end{proof}


\section{Proof of the main result}
\label{section:PMP_General}
 
In this section we prove the main result of this article, that is Theorem \ref{thm:PMP_General} below. We recall that this result is a first-order necessary optimality condition of Pontryagin-type for the general Wasserstein optimal control problem $(\Ppazo)$ defined in the Introduction.

\begin{thm}[Pontryagin Maximum Principle for $(\Ppazo)$]
\label{thm:PMP_General}
Let $(u^*(\cdot),\mu^*(\cdot)) \in \U \times \Lip([0,T],\Pcal_c(\R^d))$ be an optimal pair control-trajectory for $(\Ppazo)$ and assume that the set of hypotheses \textnormal{\textbf{(H)}} below holds. 
\begin{framed}
\begin{center}
\textnormal{\textbf{(H)}}
\end{center}
\vspace{0.2cm}

\begin{enumerate}
\item[\textnormal{\textbf{(H1)}}] The set of admissible controls is defined as $\U = L^{\infty}([0,T],U)$ where $U$ is any $C^0$-closed subset of $\big\{ \omega \in C^1(\R^d,\R^d) ~\text{s.t.}~ \NormC{\omega(\cdot)}{0}{\R^d,\R^d} + \Lip(\omega(\cdot);\R^d) \leq L_U \big\}$ for a given constant $L_U > 0$. \\
\item[\textnormal{\textbf{(H2)}}] The non-local velocity field $\mu \in \Pcal_c(\R^d) \mapsto v[\mu] \in L^1([0,T],C^1(\R^d,\R^d))$ satisfies the classical Cauchy-Lipschitz assumptions in Wasserstein spaces, i.e. there exist positive constants $M,L_1$ and $L_2$ such that
\begin{equation*}
\begin{aligned}
|v[\mu](t,x)| \leq & ~ M(1+|x|) ~,~ |v[\mu](t,x) - v[\mu](t,y)| \leq L_1 |x-y|, \\
& \NormC{v[\mu](t,\cdot) - v[\nu](t,\cdot)}{0}{\R^d} \leq L_2 W_1(\mu,\nu)
\end{aligned}
\end{equation*}
for all $(x,y) \in \R^d$ and $\Lcal^1$-almost every $t \in [0,T]$. \\
\item[\textnormal{\textbf{(H3)}}] The maps $\mu \mapsto v^i[\mu](t,x) \in \R$ are regular in the sense of Definition \ref{def:Regular} over $\Pcal(K)$ for any compact set $K \subset \R^d$ and Wasserstein-differentiable at $\mu^*(t)$, uniformly with respect to $(t,x) \in [0,T] \times \R^d$. The maps $\mu \mapsto \D_x v[\mu](t,x) \in \R^d$ and $(y,z) \mapsto \nabla_{\mu} \left( v^i[\cdot](t,y) \right)(\mu)(z) \in \R^d$ are continuous for all $i \in \{1,\dots,d\}$, uniformly with respect to $(t,x) \in [0,T] \times \R^d$.
\\
\item[\textnormal{\textbf{(H4)}}] The final cost $\mu \mapsto \varphi(\mu) \in \R$ and the boundary constraints maps $\mu \mapsto (\Psi^I_i(\mu),\Psi^E_j(\mu))_{i,j} \in \R^{n + m}$ are bounded and Lipschitz continuous in the $W_2$-metric over $\Pcal(K)$ for any compact set $K \subset \R^d$. They are furthermore regular in the sense of Definition \ref{def:Regular} and Wasserstein-differentiable at $\mu^*(T)$. The Wasserstein gradients $\nabla_{\mu} \varphi(\mu^*(T))(\cdot)$, $\nabla_{\mu}\Psi^I_i(\mu^*(T))(\cdot)$ and $\nabla_{\mu} \Psi^E_j(\mu^*(T))(\cdot)$ are continuous for any $(i,j) \in \{1,\dots,n \} \times \{1,\dots,m\}$. \\
\item[\textnormal{\textbf{(H5)}}] The running cost $(t,\mu,\omega) \mapsto L(t,\mu,\omega) \in \R$ is $\Lcal^1$-measurable with respect to $t \in [0,T]$, bounded, and Lipschitz continuous with respect to the product $W_2 \times C^0$-metric defined over $\Pcal(K) \times U$ for any compact set $K \subset \R^d$. It is furthermore regular in the sense of Definition \ref{def:Regular} and Wasserstein-differentiable at $\mu^*(t)$ for any $(t,\omega) \in [0,T] \times  U$, and its Wasserstein gradient $\nabla_{\mu}L(t,\mu^*(t),\omega)(\cdot)$ is continuous. \\
\item[\textnormal{\textbf{(H6)}}] The state constraints maps $(t,\mu) \mapsto \Lambda_l(t,\mu) \in \R$ are bounded and Lipschitz-continuous over $[0,T] \times \Pcal(K)$ and regular in the sense of Definition \ref{def:Regular}, for any compact set $K \subset \R^d$ and any $l \in \{1,\dots,r\}$. Moreover, the maps $(t,\mu) \mapsto \partial_t \Lambda_l(t,\mu)$ and $(t,\mu) \mapsto \nabla_{\mu} \Lambda_l(t,\mu)(\cdot)$ are well-defined and continuously differentiable at $(t,\mu^*(t))$ with
\begin{equation*}
\begin{aligned}
\nabla_{\mu} \partial_t \Lambda_l(t,\mu^*(t))(\cdot) = & \partial_t \nabla_{\mu} \Lambda_l(t,\mu^*(t))(\cdot) \in C^0(K,\R^d) ~,~ \D_x \nabla_{\mu} \Lambda_l(t,\mu^*(t))(\cdot) \in C^0(K,\R^d), \\
& \nabla_{\mu} \left[ \nabla_{\mu} \Lambda_l(t,\mu^*(t))(\cdot) \right](\cdot) \in C^0(K^2,\R^d)
\end{aligned}
\end{equation*}
\end{enumerate}
\vspace{-0.3cm}
\end{framed}
Then there exists a constant $R'_T > 0$, Lagrange multipliers $(\lambda_0,\dots,\lambda_n,\eta_1,\dots,\eta_m,\varpi_1,\dots,\varpi_r) \in \{0,1\} \times \R_+^n \times \R^m \times \M_+([0,T])^r$ and a curve $\nu^*(\cdot) \in \Lip([0,T],\Pcal(\overline{B_{2d}(0,R'_T)}))$ such that the following holds. \\
\begin{enumerate}
\item[(i)] The map $t \mapsto \nu^*(t)$ is a solution of the \textnormal{forward-backward Hamiltonian system} of continuity equations
\begin{equation}
\label{eq:PMP_HamiltonianFlow_General}
\left\{
\begin{aligned}
& \partial_t \nu^*(t) + \nabla \cdot \big( \J_{2d} \nabla_{\nu} \Hcal_{\lambda_0}(t,\nu^*(t),\zeta^*(t),u^*(t))(\cdot,\cdot)\nu^*(t) \big) = 0 ~~ & \text{in $[0,T] \times \R^{2d}$}, \\
& \pi^1_{\#} \nu^*(t) = \mu^*(t) ~~ & \text{for all $t \in [0,T]$}, \\
& \pi^2_{\#} \nu^*(T) = (-\nabla_{\mu} \Scal(\mu^*(T)))_{\#} \mu^*(T),
\end{aligned}
\right.
\end{equation}
where $\J_{2d}$ is the symplectic matrix of $\R^{2d}$. The \textnormal{augmented infinite-dimensional Hamiltonian} $\Hcal_{\lambda_0}(\cdot,\cdot,\cdot,\cdot)$ of the system is defined by 
\begin{equation}
\label{eq:Hamiltonian_General}
\Hcal_{\lambda_0}(t,\nu,\zeta,\omega) = \INTDom{\langle r , v[\pi^1_{\#} \nu](t,x) + \omega(x) \rangle}{\R^{2d}}{\nu(x,r)} - \lambda_0 L(t,\pi^1_{\#} \nu,\omega) - \Ccal(t,\pi^1_{\#} \nu,\zeta,\omega),
\end{equation}
for any $(t,\nu,\zeta,\omega) \in [0,T] \times \Pcal(\overline{B_{2d}(0,R'_T)}) \times \R^r \times U$. The \textnormal{penalized state constraints} and \textnormal{final gradient} maps are given respectively by
\begin{equation}
\label{eq:Penalized_Constraints}
\Ccal(t,\mu,\zeta,\omega) = \sum_{l=1}^r \zeta_l \left(  \partial_t \Lambda_l(t,\mu) + \INTDom{\left\langle  \nabla_{\mu} \Lambda_l(t,\mu)(x) , v[\mu](t,x) + \omega(x) \right\rangle}{\R^d}{\mu(x)} \right),
\end{equation}
and
\begin{equation}
\label{eq:FinalGradient_General}
\nabla_{\mu} \Scal(\mu) = \lambda_0 \nabla_{\mu} \varphi(\mu) + \sum_{i=1}^n \lambda_i \nabla_{\mu} \Psi_i^I(\mu) + \sum_{j=1}^m \eta_j \nabla_{\mu} \Psi_j^E(\mu).
\end{equation}
For any $l \in \{1,\dots,r\}$, the map $t \in [0,T] \mapsto \zeta^*_l(t) \in \R_+$ denotes the \textnormal{cumulated state constraints multiplier} associated with $\varpi_l$, defined by
\begin{equation*}
\zeta^*_l(t) = \mathds{1}_{[0,T)}(t) \INTSeg{}{\varpi_l(s)}{t}{T},
\end{equation*}

\item[(ii)] The Lagrange multipliers satisfy the \textnormal{non-degeneracy} condition
\begin{equation}
\label{eq:NonDegeneracy_General}
(\lambda_0,\dots\lambda_n,\eta_1,\dots,\eta_m,\varpi_1,\dots,\varpi_r) \neq 0.
\end{equation}
as well as the \textnormal{complementary slackness} conditions
\begin{equation*}
\label{eq:Slackness_General}
\left\{
\begin{aligned}
& \lambda_i \Psi^I_i(\mu^*(T)) = 0 ~~ & \text{for all $i \in \{1,\dots,n\}$}, \\
& \supp(\varpi_l) = \Big\{ t \in [0,T] ~\text{s.t.}~ \Lambda_l(t,\mu(t)) = 0 \Big\} ~~ & \text{for all $l \in \{1,\dots,r\}$}.
\end{aligned}
\right.
\end{equation*}
\item[(iii)] The \textnormal{Pontryagin maximization condition}
\begin{equation}
\label{eq:Maximization_General}
\Hcal_{\lambda_0}(t,\nu^*(t),\zeta^*(t),u^*(t)) = \max_{\omega \in U} \big[ \Hcal_{\lambda_0}(t,\nu^*(t),\zeta^*(t),\omega) \big]
\end{equation}
holds for $\Lcal^1$-almost every $t \in [0,T]$. 
\end{enumerate}
\end{thm}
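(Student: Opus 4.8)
The plan is to follow the four-step strategy of \cite{PMPWass}, adapted to handle constraints via the packages-of-needle-variations technique of \cite{Arutyunov2004,NonsmoothPMPNeedle2}. First I would fix the optimal pair $(u^*(\cdot),\mu^*(\cdot))$ and, for a finite collection of Lebesgue points $\{\tau_1,\dots,\tau_N\} \subset [0,T]$ of the relevant integrands together with perturbation vectors $\{\omega_1,\dots,\omega_N\} \subset U$, build an $N$-parameter needle variation: for $e = (e_1,\dots,e_N) \in [0,\epsilon]^N$, replace $u^*$ on intervals of length $e_i$ around $\tau_i$ by $\omega_i$. Using the representation formula \eqref{eq:Representation_Formula}, the perturbed trajectory $\mu_e(\cdot)$ is the pushforward of $\mu^0$ by the perturbed flow, and by Proposition \ref{prop:Wasserstein_Differential_flow} together with the chainrule of Proposition \ref{prop:Chainrule} the map $e \mapsto \mu_e(T)$ is Fr\'echet-differentiable at $e = 0$ in the $W_2$-metric, with directional derivatives expressed through the linearized nonlocal flow \eqref{eq:Differential_NonLocalFlow}. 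The standard needle-variation computation gives the increment of the flow per unit $e_i$ in terms of $v[\mu^*(\tau_i)](\tau_i,\cdot) + \omega_i - v[\mu^*(\tau_i)](\tau_i,\cdot) - u^*(\tau_i,\cdot)$, i.e. the ``needle vector field'' $(\omega_i - u^*(\tau_i,\cdot))$ transported forward.

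Second, I would assemble the finite-dimensional optimization problem in the variables $e \in [0,\epsilon]^N$: minimize $e \mapsto \INTSeg{L(t,\mu_e(t),u_e(t))}{t}{0}{T} + \varphi(\mu_e(T))$ subject to $\Psi^I(\mu_e(T)) \le 0$, $\Psi^E(\mu_e(T)) = 0$, and the running constraint $\Lambda(t,\mu_e(t)) \le 0$ for all $t$. Since $e = 0$ is a local minimizer of this problem (with $\Omega = [0,\epsilon]^N$ a convex set) and all the data are calm at $0$ by hypotheses \textbf{(H4)}--\textbf{(H6)} and Lemma \ref{lem:Continuous_Extension} to pass from one-sided to two-sided differentiability, I would apply the Michel-Penot multiplier rule, Theorem \ref{thm:Lagrange_MP}, to obtain multipliers $(\lambda_0^N,\dots,\lambda_n^N,\eta_1^N,\dots,\eta_m^N)$ and, for the running constraint, a measure-valued multiplier $\varpi^N \in \M_+([0,T])^r$ supported on the active set. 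The state-constraint term is handled in Gamkrelidze form: integrating by parts the time-derivative $\frac{d}{dt}\Lambda_l(t,\mu_e(t))$ along the perturbed trajectory produces exactly the penalized constraint map \eqref{eq:Penalized_Constraints} and the cumulated multiplier $\zeta^*_l(t) = \mathds{1}_{[0,T)}(t)\INTSeg{}{\varpi_l(s)}{t}{T}$, which is $BV$; the extra regularity in \textbf{(H6)} is what upgrades it to an absolutely continuous costate.

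Third comes the costate construction: I would introduce the adjoint curve $\nu^*(\cdot) \in \Lip([0,T],\Pcal(\overline{B_{2d}(0,R'_T)}))$ as the solution of the backward continuity equation with first marginal $\mu^*(\cdot)$ and terminal condition $\pi^2_{\#}\nu^*(T) = (-\nabla_\mu \Scal(\mu^*(T)))_{\#}\mu^*(T)$, where $\Scal$ aggregates the final-cost and boundary multipliers as in \eqref{eq:FinalGradient_General}. The key algebraic identity is that, for each $i$, pairing the terminal covector against the needle-generated tangent vector at $T$ equals (by the duality between the linearized flow \eqref{eq:Differential_NonLocalFlow} and the adjoint equation, exactly as in \cite[Section 4]{PMPWass}) the integral of $\langle r, \omega_i(x) - u^*(\tau_i,x)\rangle$ against $\nu^*(\tau_i)$, minus the $L$- and $\Ccal$-contributions at $\tau_i$. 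Hence the $N$ first-order inequalities coming from (S) read $\Hcal_{\lambda_0}(\tau_i,\nu^*(\tau_i),\zeta^*(\tau_i),\omega_i) \le \Hcal_{\lambda_0}(\tau_i,\nu^*(\tau_i),\zeta^*(\tau_i),u^*(\tau_i))$ up to a remainder that vanishes as $\epsilon \downarrow 0$. Finally I would let $N \to \infty$ along a countable dense family of $(\tau,\omega)$, extract weak-$^*$ limits of the multiplier sequences (normalized by (NT), which survives in the limit giving non-degeneracy \eqref{eq:NonDegeneracy_General}), use the continuity hypotheses to pass the Hamiltonian inequality to a.e.\ $t$, and invoke Lemma \ref{lem:Pointwise_Convergence} to recover the maximization condition \eqref{eq:Maximization_General} at every Lebesgue point; complementary slackness and the support condition on $\varpi_l$ follow from (CS) and a standard closedness argument for the active set.

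The main obstacle I expect is the limiting procedure $N \to \infty$ and the decoupling of the multipliers from the needle parameters: a priori the multipliers $(\lambda^N,\eta^N,\varpi^N)$ depend on the chosen finite package $\{(\tau_i,\omega_i)\}$, and one must show they can be chosen consistently (e.g. by a diagonal/compactness argument over an exhausting sequence of finite packages) so that a single limiting multiplier tuple simultaneously certifies the maximization inequality at all points. Compounding this, the state-constraint multiplier lives in the infinite-dimensional space $\M_+([0,T])^r$, so its weak-$^*$ precompactness must be secured by a uniform total-variation bound (this is where the boundedness in \textbf{(H6)} and the Gamkrelidze reformulation pay off), and one must verify that the cumulated multiplier $\zeta^*(\cdot)$ obtained in the limit is still the primitive of the limiting $\varpi_l$ and that the terminal/marginal conditions for $\nu^*$ are preserved — the uniform support bound $R'_T$ on the trajectories (from Theorem \ref{thm:Nonlocal_PDE}) is essential to keep everything inside a fixed compact set so that the $W_2$-Lipschitz and regularity hypotheses apply uniformly.
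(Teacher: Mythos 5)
Your proposal follows the same four-step architecture as the paper: packages of needle-like variations, the Michel-Penot multiplier rule applied to the finite-dimensional problem in $e$, a costate curve $\nu^*_N(\cdot)$ propagating the terminal inequality backward along the needle parameters, and a limiting procedure $N \to \infty$ over a countable dense family combined with Lemma \ref{lem:Pointwise_Convergence}. The identification of the Gamkrelidze form, the constancy of the dual pairing along the backward flow, the normalization (NT) surviving the limit, and the need for a uniform total-variation bound on $(\varpi^N)$ are all on the mark.

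There is, however, one genuine technical gap that your sketch elides and that would block the maximization condition in Step 4. When you specialize the stationarity inclusion (S) to the $k$-th needle direction and transport it to time $\tau_k$, the resulting inequality is \emph{not} directly $\Hcal_{\lambda_0^N}(\tau_k,\nu^*_N(\tau_k),\zeta^*_N(\tau_k),\omega_k) \le \Hcal_{\lambda_0^N}(\tau_k,\nu^*_N(\tau_k),\zeta^*_N(\tau_k),u^*(\tau_k))$: because $\zeta^*_N(\cdot)$ is only $BV$ and $\varpi_l^N$ may charge the point $\tau_k$, an extra atom term of the form $\sum_l \varpi_l^N(\{\tau_k\}) \int \langle \nabla_\mu \Lambda_l(\tau_k,\mu^*(\tau_k)), \omega_k - u^*(\tau_k,\cdot)\rangle \mathrm{d}\mu^*(\tau_k)$ survives (this is exactly \eqref{eq:Partial_Maximization_General}). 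Your phrase ``up to a remainder that vanishes as $\epsilon \downarrow 0$'' does not account for this: the needle length $\epsilon$ has nothing to do with the size of these atoms, and they persist uniformly in $N$. The paper's fix is a two-stage argument: first prove the PMP under the auxiliary hypothesis \textbf{(H7)} that the limit multipliers $\varpi_l$ are absolutely continuous with respect to $\Lcal^1$ (so the atoms vanish via Proposition \ref{prop:Portmanteau}), and then lift \textbf{(H7)} by mollifying the $\varpi_l^N$, re-running the argument with an $O(\epsilon)$ error estimate on the transported inequality, passing $N \to \infty$ for fixed mollification parameter, and finally sending $\epsilon \downarrow 0$; the atoms of the true $\varpi_l$ form an at-most-countable set, which is then excluded from the full-measure set $\Mcal^\circ$ of times on which the maximization condition is asserted. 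Without this extra approximation layer, your passage from the partial to the full maximization condition would be invalid whenever some $\varpi_l$ has atoms (which genuinely happens for touching-type state constraints).

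Two smaller points worth fixing in your write-up: the Lagrange multiplier rule gives a \emph{scalar} multiplier $\theta_l^N \in \R_+$ for the constraint $\max_t \Lambda_l(t,\tilde\mu_e(t)) \le 0$; the measure $\varpi_l^N$ arises from the convex subdifferential of the max-norm inside the Michel-Penot chain rule of Proposition \ref{prop:Prop_MPsubdiff}-(c), and one then absorbs $\theta_l^N$ into $\varpi_l^N$. Also, the cancellation that makes the transported quantity $\K^N_{\omega_k,\tau_k}(\cdot)$ \emph{absolutely continuous} (hence differentiable a.e., hence constant after its derivative is shown to vanish) is the precise identity $\mathrm{d}\zeta^*_{N,l} = -\varpi_l^N$ between the Stieltjes-integral term and the $\zeta^*$-weighted term in \eqref{eq:Kcal_Def}; ``integrating by parts'' is the right intuition, but the verification that the singular parts cancel is what actually justifies the Gamkrelidze reformulation here.
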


\begin{rmk}[The Gamkrelidze Maximum Principle]
The so-called Gamkrelidze formulation of the PMP corresponds to the case in which one includes the derivative of the state constraints inside the Hamiltonian function. The consequence of this choice is that the costate variables are absolutely continuous in time instead of being merely BV as it is the case in the more classical formulation of the constrained PMP (see e.g. \cite[Chapter 9]{Vinter}). 

As already mentioned in the Introduction, this fact is crucial for our purpose since absolutely continuous curves in Wasserstein spaces are exactly those curves which solve a continuity equation. Hence, one cannot derive a Hamiltonian system such as \eqref{eq:PMP_HamiltonianFlow_General} by sticking to the classical formulation of the PMP.
\end{rmk}

\begin{rmk}[On the regularity hypothesis \textnormal{\textbf{(H1)}}]
One of the distinctive features of continuity equations in Wasserstein spaces, compared to other families of PDEs is that they require Cauchy-Lipschitz assumptions on the driving velocity fields in order to be classically well-posed for arbitrary initial data. Even though the existence theory has gone far beyond this basic setting, notably through the DiPerna-Lions-Ambrosio theory (see \cite{DiPernaLions,AmbrosioPDE} or \cite[Section 8.2]{AGS}), such extensions come at the price of losing the strict micro-macro correspondence of the solutions embodied by the underlying flow-structure. Therefore, from a mean-field control-theoretic viewpoint, it seemed more meaningful for the author to work in a setting where classical well-posedness holds for the optimal trajectory.

Furthermore, the proof of Theorem \ref{thm:PMP_General} relies heavily on the geometric flow of diffeomorphism structure of the underlying characteristic system of ODEs, both forward and backward in time. For this reason, the Lipschitz-regularity assumption \textnormal{\textbf{(H1)}} is instrumental in our argumentation. 

Let it be remarked however that there exists common examples of Wasserstein optimal control problems for which the optimal control is $C^1$-smooth in space. Such a situation is given e.g. by controlled vector fields of the form $u(t,x) = \sum_{k=1}^m u_k(t) X_k(\cdot)$ where $X_1,\dots,X_m \in C^1(\R^d,\R^d)$ and $u_1,\dots,u_m \in L^{\infty}([0,T],\R)$, or by non-linear controlled vector field $(t,x,\mu,u) \in [0,T] \times \R^d \times \Pcal_c(\R^d) \times \R^m \mapsto v[\mu](t,x,u) \in \R^d$. 
\end{rmk}

We divide the proof of Theorem \ref{thm:PMP_General} into four steps. In Step 1, we introduce the concept of \textit{packages of needle-like variations} of an optimal control and compute the corresponding perturbations induced on the optimal trajectory. In Step 2, we apply the non-smooth Lagrange multiplier rule of Theorem \ref{thm:Lagrange_MP} to the sequence of finite-dimensional optimization problem formulated on the length of the needle variations, to obtain a family of finite-dimensional optimality conditions at time $T$. We introduce in Step 3 a suitable notion of costate, allowing to propagate this family of optimality condition backward in time, yielding the PMP with a relaxed maximization condition restricted to a countable subset of needle parameters. The full result is then recovered in Step 4 through a limiting procedure combined with several approximation arguments.


\bigskip

\begin{flushleft}
\underline{\textbf{Step 1 : Packages of needle-like variations :}}
\end{flushleft}

\bigskip

We start by considering an optimal pair control-trajectory $(u^*(\cdot),\mu^*(\cdot)) \in \U \times \Lip([0,T],\Pcal(\overline{B(0,R_T)}))$ where $R_T > 0$ is given by Theorem \ref{thm:Nonlocal_PDE}. Let $\T \subset [0,T]$ be the set of Lebesgue points of $t \in [0,T] \mapsto (v[\mu^*(t)](t,\cdot),u^*(t,\cdot)$ $,L(t,\mu^*(t),\cdot) \in C^0(\overline{B(0,R_T)},\R^d) \times U \times C^0(U,\R)$ in the sense of Bochner's integral (see e.g. \cite[Theorem 9]{DiestelUhl}). This set has full Lebesgue measure in $[0,T]$, and Lemma \ref{lem:Pointwise_Convergence} yields the existence of two subsets $\Acal,\Mcal \subset \T$, having respectively null and full Lebesgue measure, such that for any $\tau \in \Mcal$, there exists $(\tau_k) \subset \Acal$ converging towards $\tau$ and such that 
\begin{equation*}
\NormC{u^*(\tau,\cdot)-u^*(\tau_k,\cdot)}{0}{\R^d,\R^d} ~\underset{k \rightarrow +\infty}{\longrightarrow}~ 0 \qquad \text{and} \qquad \NormC{L(\tau,\mu^*(\tau),\cdot) - L(\tau_k,\mu^*(\tau_k),\cdot)}{0}{U,\R} ~\underset{k \rightarrow +\infty}{\longrightarrow}~ 0.
\end{equation*}
We further denote by $U_D$ a countable and dense subset of the set of admissible control values $U$ which is compact and separable in the $C^0$-topology as a consequence of  \textbf{(H1)}.

\begin{Def}[Package of needle-like variations]
\label{def:Needle_Variations}
Let $(u^*(\cdot),\mu^*(\cdot)) \in \U \times \Lip([0,T],\Pcal_c(\R^d))$ be an optimal pair control-trajectory. Given $N \geq 1$, a family of elements $\{(\omega_k , \tau_k)\}_{k = 1}^N \subset U_D \times \Acal$ and $e = (e_1,\dots,e_N) \in [0,\overline{\epsilon}_N]^N$ such that $[\tau_i-e_i,\tau_i] \cap [\tau_j-e_j,\tau_j] = \emptyset$ for all distinct pairs $i,j \in \{1,\dots,N\}$, we define the \textnormal{$N$-package of needle-like variations} of $u^*(\cdot)$ by
\begin{equation}
\label{eq:Needle_Variation}
\tilde{u}^N_e \equiv \tilde{u}_e : t \mapsto \left\{ 
\begin{aligned}
& \omega_k ~\hspace{0.3cm} \text{if $t \in [\tau_k-e_k,\tau_k]$,} \\
& u^*(t) \hspace{1.3cm} \text{otherwise}.
\end{aligned}
\right.
\end{equation}
We also denote by $t \mapsto \tilde{\mu}_e(\cdot)$ the corresponding perturbed trajectory, i.e. the solution of \eqref{eq:Nonlocal_Cauchy_problem} associated with the controlled non-local velocity field $v[\cdot](\cdot,\cdot) + \tilde{u}_e(\cdot,\cdot)$. 
\end{Def}

This class of variations is known in the literature of control theory to generate admissible perturbations of the optimal control without any assumption on the structure of the control set $\U$, while allowing for an explicit and tractable computation of the relationship between the perturbed and optimal states (see e.g. \cite{BressanPiccoli}).  

In the following lemma, we make use of the geometric structure of solutions to non-local transport equations presented in Theorem \ref{thm:Nonlocal_PDE} together with some notations borrowed from Proposition \ref{prop:Wasserstein_Differential_flow} to express $\tilde{\mu}_e(t)$ as a function of $\mu^*(t)$ for all times $t \in [0,T]$. In the sequel, we denote by $\Phi^{v,u}_{(s,t)}[\mu(s)](\cdot)$ the flow map generated by the non-local velocity field $v[\cdot](\cdot,\cdot) + u(\cdot,\cdot)$ between times $s$ and $t$, defined as in \eqref{eq:Flow_def}.

\begin{lem}[First-order perturbation induced by a package of needle-like variations in the non-local case]
\label{lem:First_order_needle_General}
There exists a family of maps $(\G^N_t(\cdot,\cdot))_{t \in [0,T]} \subset C^0([-\bar{\epsilon}_N,\bar{\epsilon}_N]^N \times \R^d,\R^d)$ such that 
\begin{enumerate}
\item[(i)] For all $(t,e) \in [0,T] \times [0,\bar{\epsilon}_N]^N$, the perturbed measures $\tilde{\mu}_e(t)$ satisfy
\begin{equation}
\label{eq:Perturbed_FinalState_General}
\tilde{\mu}_e(t) = \G^N_t(e,\cdot)_{\#}\mu^*(t).
\end{equation}
\item[(ii)] For all $(t,e) \in [0,T] \times [0,\bar{\epsilon}_N]^N$, the maps $\G_t^N(e,\cdot)$ are $C^1$-diffeomorphisms over $\overline{B(0,R_T)}$.  
\item[(iii)] There exists a constant $R_T^{\Phi} > 0$ depending on $R_T,L_U$ such that for all $(t,e) \in [0,T] \times [-\bar{\epsilon}_N,\bar{\epsilon}_N]^N$ one has $\supp(\G^N_t(e,\cdot)_{\#}\mu^*(T)) \subset \overline{B(0,R_T^{\Phi})}$
\item[(iv)] The map $e \in [-\bar{\epsilon}_N,\bar{\epsilon}_N]^N \mapsto \G^N_t(e,\cdot)$ is Fr\'echet-differentiable at $e = 0$ with respect to the $C^0(\overline{B(0,R_T)},\R^d)$ -norm, uniformly with respect to $t \in [0,T]$. The corresponding Taylor expansion can be written explicitly as
\begin{equation}
\label{eq:TaylorExpansion_Needle_General}
\G^N_t(e,\cdot) = \Id + \sum\limits_{k=1}^{\iota(t)} e_l \F^{\omega_k,\tau_k}_t \circ \Phi^{v,u^*}_{(t,\tau_k)}[\mu^*(t)](\cdot) + o(|e|),
\end{equation}
where $\iota(t) \in \{1,\dots,N\}$ is the biggest index such that $\tau_{\iota(t)} \leq t \leq \tau_{\iota(t)+1}-e_{\iota(t)+1}$. For all $x \in \overline{B(0,R_T)}$ and $k \in \{1,\dots,N\}$, the map $t \in [\tau_k,T] \mapsto \F^{\omega_k,\tau_k}_t(x)$ is the unique solution of the non-local Cauchy problem
\begin{equation}
\label{eq:FirstOrder_Needle_Expression_General}
\left\{
\begin{aligned}
\partial_t \F^{\omega_k,\tau_k}_t(x) & = \left( \D_x u^* \left( t , \Phi^{v,u^*}_{(\tau_k,t)}(x) \right) + \D_x v[\mu^*(t)] \left( t , \Phi^{v,u^*}_{(\tau_k,t)}(x) \right) \right) \F^{\omega_k,\tau_k}_t(x) \\
& + \INTDom{\BGamma^{v}_{\left( t ,\Phi^{v,u^*}_{(\tau_k,t)}(x) \right)} \left( \Phi^{v,u^*}_{(\tau_k,t)}(y) \right) \F^{\omega_k,\tau_k}_t(y)}{\R^d}{\mu^*(\tau_k)(y)}, \\
\F^{\omega_k,\tau_k}_{\tau_k}(x) & = \omega_k(x) - u^*(\tau_k,x). 
\end{aligned}
\right.
\end{equation}
\end{enumerate}
\end{lem}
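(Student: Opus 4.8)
The plan is to establish the representation formula \eqref{eq:Perturbed_FinalState_General} by first constructing the maps $\G^N_t(e,\cdot)$ explicitly as compositions of flow maps of the perturbed and optimal dynamics, and then to extract their properties (diffeomorphism structure, support bounds, differentiability and Taylor expansion) from the corresponding properties of those flows together with the chainrule of Proposition \ref{prop:Chainrule} and the flow-differentiation result of Proposition \ref{prop:Wasserstein_Differential_flow}.

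\textbf{Construction of the maps.} For a fixed time $t \in [0,T]$, let $\iota(t)$ be the index defined in the statement, so that all needle intervals $[\tau_k - e_k, \tau_k]$ with $k \le \iota(t)$ lie entirely to the left of $t$, while the intervals with $k > \iota(t)$ lie entirely to the right. On $[0,t]$ the perturbed control $\tilde u_e$ differs from $u^*$ only on the union of the first $\iota(t)$ needle intervals. Since these intervals are pairwise disjoint, the perturbed flow $\Phi^{v,\tilde u_e}_{(0,t)}[\tilde\mu_e(0)](\cdot)$ can be written as an ordered composition of flow maps: on each $[\tau_k - e_k, \tau_k]$ one flows along $v[\cdot](\cdot,\cdot) + \omega_k$, and on the complementary sub-intervals one flows along $v[\cdot](\cdot,\cdot) + u^*$. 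Using the representation formula \eqref{eq:Representation_Formula} from Theorem \ref{thm:Nonlocal_PDE} for both $\tilde\mu_e(\cdot)$ and $\mu^*(\cdot)$ — noting that $\tilde\mu_e(0) = \mu^*(0) = \mu^0$ — we obtain
\begin{equation*}
\tilde\mu_e(t) = \Phi^{v,\tilde u_e}_{(0,t)}[\mu^0](\cdot)_{\#}\mu^0 = \Big( \Phi^{v,\tilde u_e}_{(0,t)}[\mu^0](\cdot) \circ \big( \Phi^{v,u^*}_{(0,t)}[\mu^0] \big)^{-1}(\cdot) \Big)_{\#} \mu^*(t),
\end{equation*}
which defines $\G^N_t(e,\cdot)$. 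When $e = 0$ the two flows coincide and $\G^N_t(0,\cdot) = \Id$, as required.

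\textbf{Extraction of the properties.} Assertions (ii) and (iii) follow from Theorem \ref{thm:Nonlocal_PDE} and Proposition \ref{prop:Classical_Differential_flow}: under \textbf{(H1)}--\textbf{(H2)} each constituent flow is a $C^1$-diffeomorphism whose Lipschitz constant and support growth are controlled on $[0,T]$ by constants depending only on $R_T$ and $L_U$, and compositions of such maps inherit these bounds with an enlarged radius $R_T^\Phi$; this works on $[-\bar\epsilon_N, \bar\epsilon_N]^N$ after extending $e \mapsto \G^N_t(e,\cdot)$ via Lemma \ref{lem:Continuous_Extension}. For assertion (iv), the dependence of $\G^N_t(e,\cdot)$ on $e$ enters through two mechanisms: the lengths $e_k$ of the intervals on which $\tilde u_e$ switches to $\omega_k$, and — because $v$ is non-local — the measure argument $\tilde\mu_e(s)$ that feeds into $v[\tilde\mu_e(s)]$ along the trajectory. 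The one-dimensional case (a single needle at $\tau_k$) was already handled in \cite[Proposition 5]{PMPWass}: differentiating in $e_k$ at $e_k = 0$ produces a jump $\F^{\omega_k,\tau_k}_{\tau_k}(x) = \omega_k(x) - u^*(\tau_k,x)$ in the state perturbation at time $\tau_k$ — coming from the classical needle-variation formula and the fact that $\tau_k \in \Acal$ is a Lebesgue point — which then propagates forward along the linearized non-local dynamics \eqref{eq:FirstOrder_Needle_Expression_General}, the non-local coupling term being exactly the $\BGamma^v$-integral produced by Proposition \ref{prop:Wasserstein_Differential_flow}. Since the needle intervals are disjoint, the first-order effects are additive: the mixed second-order interactions between distinct needles are $o(|e|)$ because each pair of intervals has total length $O(|e|)$ and the perturbations they induce are themselves $O(|e|)$. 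Summing the individual contributions and pushing them forward from time $\tau_k$ to time $t$ via the flow $\Phi^{v,u^*}_{(\tau_k,t)}$, evaluated as usual by pre-composing with $\Phi^{v,u^*}_{(t,\tau_k)}[\mu^*(t)]$, yields the Taylor expansion \eqref{eq:TaylorExpansion_Needle_General}. The Fréchet-differentiability (as opposed to mere directional differentiability) and the uniformity in $t \in [0,T]$ follow from the uniform-in-$t$ estimates on the flows together with Proposition \ref{prop:Chainrule}, whose hypotheses (i)--(iii) are met by construction.

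\textbf{Main obstacle.} The delicate point is making the informal additivity argument rigorous: one must show that composing the $N$ elementary flow-maps — each of which depends on $e$ both through an interval-length and through its non-local measure argument — produces a map that is genuinely Fréchet-differentiable at $e = 0$ with the claimed derivative, uniformly in $t$, and that all cross-terms between distinct needles are truly $o(|e|)$ and not merely $o(e_k)$ for each $k$ separately. This requires careful bookkeeping of the Lipschitz dependence of each flow on its measure argument (via hypothesis \textbf{(H2)}, constant $L_2$) propagated through the composition, so that the perturbation of the measure fed into one needle's dynamics by an earlier needle contributes at second order. Invoking Lemma \ref{lem:Pointwise_Convergence} to place the $\tau_k$ in $\Acal$ and the uniform continuity of the data at Lebesgue points is what controls the elementary needle-variation error terms; the rest is a Grönwall-type estimate on the linearized non-local Cauchy problem \eqref{eq:FirstOrder_Needle_Expression_General}.
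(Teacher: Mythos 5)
Your construction of $\G^N_t(e,\cdot) = \Phi^{v,\tilde u_e}_{(0,t)}[\mu^0]\circ\big(\Phi^{v,u^*}_{(0,t)}[\mu^0]\big)^{-1}$, the use of Lemma \ref{lem:Continuous_Extension} to pass to $[-\bar\epsilon_N,\bar\epsilon_N]^N$, and your appeal to Proposition \ref{prop:Classical_Differential_flow} and Proposition \ref{prop:Wasserstein_Differential_flow} for the local and non-local parts of the linearization are all exactly the paper's ingredients. The structural outline is right.

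What you leave unresolved is precisely what you flag as the ``main obstacle'': you assert that the first-order effects of the $N$ disjoint needles are additive and that the cross-terms are $o(|e|)$, but you only sketch the intuition. The paper closes this gap by an explicit induction on $\iota(t)$: starting from the case of a single needle it assumes the expansion \eqref{eq:TaylorExpansion_Needle_General} holds for $\iota(t) = k-1$, then on $[\tau_{k-1},\tau_k]$ chains the $e_k$-expansion of $\Phi^{v,\omega_k}_{(\tau_k-e_k,\tau_k)}\circ\Phi^{v,u^*}_{(\tau_k,\tau_k-e_k)}$ (Lebesgue Differentiation Theorem at $\tau_k \in \Acal$) with the transport of the earlier expansion through $\Phi^{v,u^*}_{(\tau_{k-1},\tau_k)}[\tilde\mu_e(\tau_{k-1})]$, the non-local term being supplied by Proposition \ref{prop:Wasserstein_Differential_flow}. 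The crucial gluing point, which your sketch omits, is that the maps obtained after this chaining coincide on $[\tau_{k-1},\tau_k]$ with solutions of \eqref{eq:FirstOrder_Needle_Expression_General}; the paper invokes Cauchy-Lipschitz uniqueness to extend $t\mapsto\F^{\omega_l,\tau_l}_t(x)$ consistently across successive needle intervals for each $l < k$. That inductive chaining is what makes the additivity rigorous and delivers the cross-terms as $o(|e|)$ \emph{uniformly}; your ``Grönwall-type estimate'' remark points in the right direction but is not a substitute for the induction, which tracks exactly how the perturbation of the measure argument generated by earlier needles enters each subsequent flow at second order. So: same route, but the central inductive argument is missing, not merely hard.

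One small correction: the Lebesgue-point property you want at $\tau_k$ comes from the definition of $\T$ (hence $\Acal\subset\T$) as the set of Bochner-Lebesgue points of $t\mapsto(v[\mu^*(t)](t,\cdot),u^*(t,\cdot),L(t,\mu^*(t),\cdot))$; Lemma \ref{lem:Pointwise_Convergence} is not what produces this property at a fixed $\tau_k$ — it is used later, in Step 4, to approximate arbitrary $\tau\in\Mcal$ by a sequence in $\Acal$.
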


\begin{proof}
The proof of this result is similar to that of  \cite[Lemma 5]{PMPWass}, with some extra technicalities arising from the induction argument performed on the non-local terms. By definition of a package of needle-like variations, the perturbed controls $\tilde{u}_e(\cdot,\cdot)$ generate well-defined flows of diffeomorphisms $(\Phi^{v,\tilde{u}_e}_{(0,t)}[\mu^0](\cdot))_{t \in [0,T]}$ which are such that 
\begin{equation*}
\tilde{\mu}_e(t) = \Phi^{v,\tilde{u}_e}_{(0,t)}[\mu^0] \circ \Phi^{v,u^*}_{(t,0)}[\mu^*(t)](\cdot)_{\#} \mu^*(t).
\end{equation*}
Hence, items (i), (ii) and (iii) hold for any $e \in [0,\epsilon_N]^N$ with $\G^N_t(e,\cdot) = \Phi^{v,\tilde{u}_e}_{(0,t)}[\mu^0] \circ \Phi^{v,u^*}_{(t,0)}[\mu^*(t)](\cdot)$.

We focus our attention on the proof by induction of (iv). Let $t \in [0,T]$ be such that $\iota(t) = 1$. By \eqref{eq:Needle_Variation}, one has that
\begin{equation*}
\begin{aligned}
\tilde{\mu}_e(t) & = \Phi^{v,u^*}_{(\tau_1,t)}[\tilde{\mu}_e(\tau_1)] \circ \Phi^{v,\omega_1}_{(\tau_1-e_1,\tau_1)} [\mu^*(\tau_1-e_1)] \circ \Phi^{v,u^*}_{(t,\tau_1-e_1)}[\mu^*(t)] (\cdot)_{\#} \mu^*(t) \\ 
& = \Phi^{v,u^*}_{(\tau_1,t)}[\tilde{\mu}_e(\tau_1)] \circ \Phi^{v,\omega_1}_{(\tau_1-e_1,\tau_1)} [\mu^*(\tau_1-e_1)] \circ \Phi^{v,u^*}_{(\tau_1,\tau_1-e_1)} [\mu^*(\tau_1)] \circ \Phi^{v,u^*}_{(t,\tau_1)}[\mu^*(t)] (\cdot)_{\#} \mu^*(t).
\end{aligned}
\end{equation*}
Invoking Lebesgue's Differentiation Theorem (see e.g. \cite[Corollary 2.23]{AmbrosioFuscoPallara}) along with the continuity of $e \mapsto v[\tilde{\mu}_{e}(t)](t,\cdot)$ in the $C^0$-norm topology, it holds that 
\begin{equation*}
\begin{aligned}
& \Phi^{v,\omega_1}_{(\tau_1-e_1,\tau_1)} [\mu^*(\tau_1-e_1)](x) \\
= ~  &  x + \INTSeg{ \left( v[\tilde{\mu}_e(t)] \left( t, \Phi^{v,\omega_1}_{(\tau_1-e_1,t)} [\mu^*(\tau_1-e_1)](x) \right) + \omega_1 \left( \Phi^{v,\omega_1}_{(\tau_1-e_1,t)} [\mu^*(\tau_1-e_1)](x) \right) \right)}{t}{\tau_1-e_1}{\tau_1} \\
= ~ & x + e_1 \Big( v[\mu^*(\tau_1)] \left(\tau_1,x) \right) + \omega_1 (x) \Big) + o(e_1)
\end{aligned}
\end{equation*}
as well as
\begin{equation*}
\begin{aligned}
\Phi^{v,u^*}_{(\tau_1,\tau_1-e_1)} [\mu^*(\tau_1)](x) & = x - \INTSeg{ \left( v[\tilde{\mu}_e(t)] \left( t, \Phi^{v,u^*}_{(\tau_1,t)} [\mu^*(\tau_1)](x) \right) + u^* \left( t , \Phi^{v,u^*}_{(\tau_1,t)} [\mu^*(\tau_1)](x) \right) \right)}{t}{\tau_1-e_1}{\tau_1} \\
& = x - e_1 \Big( v[\mu^*(\tau_1)] \left(\tau_1,x) \right) + u^*(\tau_1,x) \Big) + o(e_1).
\end{aligned}
\end{equation*}
Chaining these two expansions, we obtain that 
\begin{equation*}
\begin{aligned}
\tilde{\mu}_e(\tau_1) & = \Phi^{v,\omega_1}_{(\tau_1-e_1,\tau_1)} [\mu^*(\tau_1-e_1)] \circ \Phi^{v,u^*}_{(\tau_1,\tau_1-e_1)} [\mu^*(\tau_1)](\cdot)_{\#} \mu^*(\tau_1) \\
& = \big( \Id + e_1 [\omega_1(\cdot) - u^*(\tau_1,\cdot)] + o(e_1) \big)_{\#} \mu^*(\tau_1). 
\end{aligned}
\end{equation*}
We can now proceed to compute the induced first-order expansion on the non-local flows $\Phi^{v,u^*}_{(\tau_1,t)}[\tilde{\mu}_e(\tau_1)](\cdot)$ as follows
\begin{equation*}
\begin{aligned}
& \Phi^{v,u^*}_{(\tau_1,t)}[\tilde{\mu}_e(\tau_1)] \Big( x + e_1 \left[ \omega_1(x) - u^*(\tau_1,x) \right] + o(e_1) \Big) \\
= ~ & \Phi^{v,u^*}_{(\tau_1,t)}[\tilde{\mu}_e(\tau_1)](x) + e_1 \D_x \Phi^{v,u^*}_{(\tau_1,t)}[\tilde{\mu}_e(\tau_1)](x) \cdot \left[ \omega_1(x) - u^*(\tau_1,x) \right] + o(e_1) \\
= ~ & \Phi^{v,u^*}_{(\tau_1,t)}[\mu^*(\tau_1)](x) + e_1 \Big( \D_x \Phi^{v,u^*}_{(\tau_1,t)}[\mu^*(\tau_1)](x) \cdot \left[ \omega_1(x) - u^*(\tau_1,x) \right] + w_1(t,x) \Big) + o(e_1)
\end{aligned} 
\end{equation*}
where $w_1(\cdot,\cdot)$ is defined as in Proposition \ref{prop:Wasserstein_Differential_flow}, and where we used the fact that the $e \mapsto \D_x \Phi^{v,u^*}_{(\tau_1,\cdot)}[\tilde{\mu}_e(\cdot)](\cdot)$ is continuous as a consequence of hypothesis \textnormal{\textbf{(H1)}}-\textnormal{\textbf{(H2)}}. Introducing for all times $t \in [\tau_1,T]$ the map
\begin{equation*}
\F^{\omega_1,\tau_1}_t : x \in \overline{B(0,R_T)} \mapsto \D_x \Phi^{v,u^*}_{(\tau_1,t)}[\mu^*(\tau_1)](x) \left[ \omega_1(x) - u^*(\tau_1,x) \right] + w_1(t,x)
\end{equation*}
and invoking the statements of Proposition \ref{prop:Classical_Differential_flow} and Proposition \ref{prop:Wasserstein_Differential_flow}, we have that both \eqref{eq:TaylorExpansion_Needle_General} and \eqref{eq:FirstOrder_Needle_Expression_General} hold for any $e_1 \in [0,\bar{\epsilon}_N]$ and all times $t \in [0,T]$ such that $\iota(t) = 1$. 

Let us now assume that \eqref{eq:TaylorExpansion_Needle_General} and \eqref{eq:FirstOrder_Needle_Expression_General} hold for all times $t \in [0,T]$ such that $\iota(t) = k-1$, i.e.
\begin{equation}
\label{eq:General_Induction}
\tilde{\mu}_e(t) = \G_t^N(e,\cdot)_{\#} \mu^*(t) = \left( \Id + \sum\limits_{l=1}^{k-1} e_l \F^{\omega_l,\tau_k}_t \circ \Phi^{v,u^*}_{(t,\tau_l)}[\mu^*(t)](\cdot) + o(|e|) \right)_{\raisebox{0.4cm}{\scalebox{0.8}{\#}}} \mu^*(t),
\end{equation}
for $e \in [0,\bar{\epsilon}_N]^N$. By definition \eqref{eq:Needle_Variation} of an $N$-package of needle-like variations, we have that
\begin{equation*}
\tilde{\mu}_e(\tau_k) = \Phi^{v,\omega_k}_{(\tau_k-e_k,\tau_k)} [\tilde{\mu}_e(\tau_k-e_k)] \circ \Phi^{v,u^*}_{(\tau_k,\tau_k-e_k)}[\tilde{\mu}_e(\tau_k)] \circ \Phi^{v,u^*}_{(\tau_{k-1},\tau_k)}[\tilde{\mu}_e(\tau_{k-1})](\cdot)_{\#} \tilde{\mu}_e(\tau_{k-1}).
\end{equation*}
As in the initialization step, we can write using Lebesgue's Differentiation Theorem that
\begin{equation}
\label{eq:General_Expansion1}
\Phi^{v,\omega_k}_{(\tau_k-e_k,\tau_k)} [\tilde{\mu}_e(\tau_k-e_k)] \circ \Phi^{v,u^*}_{(\tau_k,\tau_k-e_k)}[\tilde{\mu}_e(\tau_k)](x) = x + e_k \left[ \omega_k(x) - u^*(\tau_k,x) \right] + o(e_k).
\end{equation}
Furthermore, invoking the induction hypothesis \eqref{eq:General_Induction} and the results of Proposition \ref{prop:Chainrule}, we obtain  that
\begin{equation}
\label{eq:General_Expansion2}
\begin{aligned}
& \Phi^{v,u^*}_{(\tau_{k-1},\tau_k)}[\tilde{\mu}_e(\tau_{k-1})] \left( x + \sum\limits_{l=1}^{k-1} \Big( e_l \F^{\omega_l,\tau_l}_{\tau_{k-1}} \circ \Phi^{v,u^*}_{(\tau_{k-1},\tau_l)}(x) + o(e_l) \Big) \right) \\
= ~ & \Phi^{v,u^*}_{(\tau_{k-1},\tau_k)}[\tilde{\mu}_e(\tau_{k-1})](x) + \sum_{l = 1}^{k-1} \Big( e_l \D_x \Phi^{v,u^*}_{(\tau_{k-1},\tau_k)}[\tilde{\mu}_e(\tau_{k-1})](x)  \F^{\omega_l,\tau_l}_{\tau_{k-1}} \circ \Phi^{v,u^*}_{(\tau_{k-1},\tau_l)}(x) + o(e_l) \Big) \\
= ~ & \Phi^{v,u^*}_{(\tau_{k-1},\tau_k)}[\mu^*(\tau_{k-1})](x) + \sum_{l = 1}^{k-1} e_l \left( \D_x \Phi^{v,u^*}_{(\tau_{k-1},\tau_k)}[\mu^*(\tau_{k-1})](x) \F^{\omega_l,\tau_l}_{\tau_{k-1}} \circ \Phi^{v,u^*}_{(\tau_{k-1},\tau_l)}(x) + w_l(\tau_k,x) + o(e_l) \right) 
\end{aligned}
\end{equation} 
where the maps $(w_l(\cdot,\cdot))_{1 \leq l \leq k-1}$ are defined as in Proposition \ref{prop:Wasserstein_Differential_flow} with $\F_l(\cdot) \equiv \F^{\omega_l,\tau_l}_{\tau_{k-1}} \circ \Phi^{u^*}_{(\tau_{k-1},\tau_l)}[\mu^*(\tau_{k-1})](\cdot)$. Plugging together equation \eqref{eq:Differential_LocalFlow} of Proposition \ref{prop:Classical_Differential_flow} and equation \eqref{eq:Differential_NonLocalFlow} of Proposition \ref{prop:Wasserstein_Differential_flow}, one can see that the maps
\begin{equation*}
t \in [\tau_{k-1},\tau_k] \mapsto \D_x \Phi^{v,u^*}_{(\tau_{k-1},t)}[\mu^*(\tau_{k-1})] \left( \Phi^{v,u^*}_{(\tau_l,\tau_{k-1})}[\mu^*(\tau_l)](x) \right) \F^{\omega_l,\tau_l}_{\tau_{k-1}}(x) + w_l \left( t,\Phi^{v,u^*}_{(\tau_l,\tau_{k-1})}[\mu^*(\tau_l)](x) \right)
\end{equation*}
are solutions of \eqref{eq:FirstOrder_Needle_Expression_General} on $[\tau_{k-1},\tau_k]$ with initial condition $\F^{\omega_l,\tau_l}_{\tau_{k-1}}(\cdot)$ at time $\tau_{k-1}$ for any $l \in \{1,\dots,k-1\}$. By Cauchy-Lipschitz uniqueness, we can therefore extend the definition of the maps $t \mapsto \F^{\omega_l,\tau_l}_{t}(x)$ to the whole of $[\tau_l,\tau_k]$ for any $l \in \{1,\dots,k-1\}$. 

Chaining the expansions \eqref{eq:General_Expansion1} and \eqref{eq:General_Expansion2} along with our previous extension argument, we obtain that both \eqref{eq:TaylorExpansion_Needle_General} and \eqref{eq:FirstOrder_Needle_Expression_General} hold up to time $\tau_k$, i.e.
\begin{equation*}
\G_{\tau_k}^N(e,\cdot) = \Id + \sum\limits_{l=1}^{k} e_l \F^{\omega_l,\tau_l}_{\tau_k} \circ \Phi^{u^*}_{(\tau_k,\tau_l)}[\mu^*(\tau_k)](\cdot) + o(|e|)
\end{equation*}
for any $e \in [0,\bar{\epsilon}_N]^N$. Performing yet another coupled Taylor expansion of the same form on the expression
\begin{equation*}
\tilde{\mu}_e(t) = \Phi^{v,u^*}_{(\tau_k,t)}[\tilde{\mu}_e(\tau_k)](\cdot)_{\#} \tilde{\mu}_e(\tau_k), 
\end{equation*}
and invoking the same extension argument yields the full induction step for all times $t \in [0,T]$ such that $\iota(t) = k$. Hence, we have proven that item (iv) holds for all $e \in [0,\bar{\epsilon}_N]^N$. Using Lemma \ref{lem:Continuous_Extension}, we can now extend the map $e \in [0,\bar{\epsilon}_N]^N \mapsto \G_t^N(e,\cdot) \in C^0(\overline{B(0,R_T)},\R^d)$ to the whole of $[-\bar{\epsilon}_N,\bar{\epsilon}_N]^N$ in a continuous and bounded way, uniformly with respect to $t \in [0,T]$, while preserving its differential at $e = 0$. 
\end{proof}

In the sequel, we drop the explicit dependence of the flow maps on their starting measures and adopt the simplified notation $\Phi^{v,u^*}_{(s,t)}(x) \equiv \Phi^{v,u^*}_{(s,t)}[\mu(s)](x)$ for clarity and conciseness.


\bigskip

\begin{flushleft}
\underline{\textbf{Step 2 : First-order optimality condition}}
\end{flushleft}

\bigskip

In Lemma \ref{lem:First_order_needle_General}, we derived the analytical expression of the first-order perturbation induced by a $N$-package of needle-like variations on the solution of a controlled non-local continuity equation. By the very definition of an $N$-package of needle-like variations, we know that the finite-dimensional optimization problem
\begin{equation*}
(\Ppazo_N) ~~ \left\{
\begin{aligned}
& \min\limits_{e \in [0,\bar{\epsilon}_N]^N} \left[ \INTSeg{L(t,\tilde{\mu}_e(t),\tilde{u}_e(t))}{t}{0}{T} + \varphi(\tilde{\mu}_e(T)) \right] \\
& \text{s.t.} \left\{
\begin{aligned}
& \Psi^E(\tilde{\mu}_e(T)) = 0, ~~ \Psi^I(\tilde{\mu}_e(T)) & \leq 0, \\
&\max_{t \in [0,T]} \Lambda(t,\tilde{\mu}_e(t)) \leq 0, 
\end{aligned}
\right. 
\end{aligned}
\right.
\end{equation*}
admits $e=0$ as an optimal solution in $[0,\bar{\epsilon}_N]^N$ for $\bar{\epsilon}_N$ small enough.

In the following lemma, we check that the functionals involved in $(\Ppazo_N)$ meet the requirements of the Lagrange multiplier rule stated in Theorem \ref{thm:Lagrange_MP}. We also compute their first-order variation induced by the package of needle-like variations at $e = 0$. 

\begin{lem}[Differentiability and calmness of the functionals involved in $(\Ppazo_N)$]
\label{lem:Derivative_RCSC} The maps $e \in [-\bar{\epsilon}_N,\bar{\epsilon}_N]^N \mapsto \varphi(\tilde{\mu}_e(T)),~ \Psi^E(\tilde{\mu}_e(T)),~ \Psi^I(\tilde{\mu}_e(T))$ and $e \in [-\bar{\epsilon}_N,\bar{\epsilon}_N]^N \mapsto \INTSeg{L(t,\tilde{\mu}_e(t),\tilde{u}_e(t))}{t}{0}{T}$ are calm and Fr\'echet- differentiable at $e=0$. Their Fr\'echet derivative in a direction $\sigma \in [0,\bar{\epsilon}_N]^N$ are respectively given by
\begin{equation}
\label{eq:Derivative_FC}
\left\{
\begin{aligned}
\nabla_e \big( \varphi(\tilde{\mu}_e(T)) \big)_{\vert e=0} (\sigma) & = \sum_{k=1}^N \sigma_k \INTDom{ \left\langle \nabla_{\mu} \varphi(\mu^*(T))(x) , \F^{\omega_k,\tau_k}_T \circ \Phi^{v,u^*}_{(T,\tau_k)}(x) \right\rangle}{\R^d}{\mu^*(T)(x)}, \\
\nabla_e \big( \Psi^I_i(\tilde{\mu}_e(T)) \big)_{\vert e=0} (\sigma) & = \sum_{k=1}^N \sigma_k \INTDom{ \left\langle \nabla_{\mu} \Psi^I_i(\mu^*(T))(x) , \F^{\omega_k,\tau_k}_T \circ \Phi^{v,u^*}_{(T,\tau_k)}(x) \right\rangle}{\R^d}{\mu^*(T)(x)}, \\
\nabla_e \big( \Psi^E_j(\tilde{\mu}_e(T)) \big)_{\vert e=0} (\sigma) & = \sum_{k=1}^N \sigma_k \INTDom{ \left\langle \nabla_{\mu}  \Psi^E_j(\mu^*(T))(x) , \F^{\omega_k,\tau_k}_T \circ \Phi^{v,u^*}_{(T,\tau_k)}(x) \right\rangle}{\R^d}{\mu^*(T)(x)}, \\
\end{aligned} 
\right.
\end{equation}
for any $(i,j) \in \{1,\dots,n\} \times \{1,\dots,m\}$ and
\begin{equation}
\label{eq:Derivative_RunningCost}
\begin{aligned}
\nabla_e \left( \INTSeg{L(t,\tilde{\mu}_e(t),\tilde{u}_e(t))}{t}{0}{T} \right)_{\vert e=0} (\sigma) = & \sum_{k=1}^N \sigma_k \INTSeg{\INTDom{ \left\langle \nabla_{\mu} L(t,\mu^*(t),u^*(t))(x) , \F^{\omega_k,\tau_k}_t \circ \Phi^{v,u^*}_{(t,\tau_k)}(x) \right\rangle}{\R^d}{\mu^*(t)(x)}}{t}{\tau_k}{T} \\
 + & \sum_{k=1}^N \sigma_k \Big( L(\tau_k,\mu^*(\tau_k),u^*(\tau_k)) - L(\tau_k,\mu^*(\tau_k),\omega_k) \Big).
\end{aligned}
\end{equation}

The maps $e \in [-\bar{\epsilon}_N,\bar{\epsilon}_N]^N \mapsto \max_{t \in [0,T]} \Lambda_1(t,\tilde{\mu}_e(t)),\dots,~\max_{t \in [0,T]} \Lambda_r(t,\tilde{\mu}_e(t))$ are calm and locally Lipschitz around $e=0$. Their Michel-Penot derivatives in a direction $\sigma \in [0,\bar{\epsilon}_N]^N$ are given by
\begin{equation}
\label{eq:Derivative_StateConstraint}
d_{\MP} \Big( \max_{t \in [0,T]} \Lambda_l(t,\tilde{\mu}_e(t))] \Big)(0 \, ;\sigma) = \sum_{k=1}^N \sigma_k \INTSeg{\INTDom{ \left\langle \nabla_{\mu} \Lambda_l(t,\mu^*(t))(x) , \F^{\omega_k,\tau_k}_t \circ \Phi^{v,u^*}_{(t,\tau_k)}(x) \right\rangle}{\R^d}{\mu^*(t)}}{\varpi^N_l(t)}{0}{T},
\end{equation}
where the Borel measures $\varpi^N_l \in \M_+([0,T])$ are such that $\supp(\varpi_l^N) = \big\{ t \in [0,T] ~\text{s.t.}~ \Lambda_l(t,\mu^*(t)) = 0 \big\}$ and $\Norm{\varpi_l ^N}_{TV} = 1$ for any $l \in \{1,\dots,r\}$. 
\end{lem}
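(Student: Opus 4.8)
The plan is to treat the three families of functionals in turn, in each case exploiting the representation $\tilde{\mu}_e(t) = \G^N_t(e,\cdot)_{\#}\mu^*(t)$ of Lemma \ref{lem:First_order_needle_General} together with the chainrule of Proposition \ref{prop:Chainrule}. Throughout, $e$ ranges over the full box $[-\bar{\epsilon}_N,\bar{\epsilon}_N]^N$, the functionals being understood via the derivative-preserving continuous extensions supplied by Lemma \ref{lem:Continuous_Extension} and Lemma \ref{lem:First_order_needle_General}(iv), and calmness being checked against its two defining clauses: a local Lipschitz-type bound at $e=0$, and finiteness of the Michel-Penot derivative in every direction (the latter being automatic wherever Fr\'echet-differentiability has been established, by Proposition \ref{prop:Prop_MPsubdiff}(a)).

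For the endpoint functionals $\varphi,\Psi^I_i,\Psi^E_j$: by Lemma \ref{lem:First_order_needle_General} the maps $\G^N_T$ satisfy hypotheses (i)--(iii) of Proposition \ref{prop:Chainrule} on $K=\overline{B(0,R_T^{\Phi})}$, and by \textbf{(H4)} the functionals are $W_2$-Lipschitz, regular and Wasserstein-differentiable at $\mu^*(T)$ with continuous gradients. Proposition \ref{prop:Chainrule} then applies directly and yields Fr\'echet-differentiability at $e=0$ together with \eqref{eq:Derivative_FC}, after substituting the Taylor expansion \eqref{eq:TaylorExpansion_Needle_General} for $\D_e\G^N_T(0,\cdot)$. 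The local Lipschitz bound follows from $|\varphi(\tilde{\mu}_e(T)) - \varphi(\mu^*(T))| \leq \Lip(\varphi)\,W_2(\tilde{\mu}_e(T),\mu^*(T)) \leq \Lip(\varphi)\,\NormC{\G^N_T(e,\cdot)-\Id}{0}{\R^d} \leq C|e|$, using that $e\mapsto\G^N_T(e,\cdot)$ is $C^0$-Lipschitz near $0$ (an immediate consequence of its construction in Lemma \ref{lem:First_order_needle_General}).

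For the running cost $J(e)=\INTSeg{L(t,\tilde{\mu}_e(t),\tilde{u}_e(t))}{t}{0}{T}$, I would split the integral over $[0,T]\setminus\bigcup_k[\tau_k-e_k,\tau_k]$ and over the needle intervals. On a needle interval the integrand equals $L(t,\tilde{\mu}_e(t),\omega_k)$; since $\tau_k\in\Acal$ is a Lebesgue point of $t\mapsto L(t,\mu^*(t),\cdot)$ and $(t,e)\mapsto\tilde{\mu}_e(t)$ is $W_2$-continuous, one gets $\INTSeg{L(t,\tilde{\mu}_e(t),\omega_k)}{t}{\tau_k-e_k}{\tau_k}=e_k L(\tau_k,\mu^*(\tau_k),\omega_k)+o(e_k)$, while the removal of $u^*$ on that interval contributes $-e_k L(\tau_k,\mu^*(\tau_k),u^*(\tau_k))+o(e_k)$; summing over $k$ produces the second line of \eqref{eq:Derivative_RunningCost}. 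On the complement one writes the difference quotient of $L(t,\tilde{\mu}_e(t),u^*(t))-L(t,\mu^*(t),u^*(t))$, applies Proposition \ref{prop:Chainrule} pointwise in $t$ via \textbf{(H5)}, and passes to the limit under the $t$-integral by dominated convergence — the domination being uniform in $t$ thanks to the boundedness of $L$ and of $\nabla_\mu L(t,\mu^*(t),u^*(t))(\cdot)$ and to the uniformity in $t$ of \eqref{eq:TaylorExpansion_Needle_General} — yielding the first line of \eqref{eq:Derivative_RunningCost}. Calmness follows as above: on the needle intervals $|L|\leq\|L\|_{\infty}$ gives a contribution $\leq 2\|L\|_\infty|e|$, and on the complement the $W_2$-Lipschitz bound on $L$ with $W_2(\tilde{\mu}_e(t),\mu^*(t))\leq C|e|$ gives $\leq CT|e|$.

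The delicate case is the state-constraint functionals $g_l(e)=\max_{t\in[0,T]}\Lambda_l(t,\tilde{\mu}_e(t))$. Local Lipschitzness near $e=0$ follows from $|g_l(e)-g_l(e')|\leq\Lip(\Lambda_l)\sup_t W_2(\tilde{\mu}_e(t),\tilde{\mu}_{e'}(t))$ together with $W_2(\tilde{\mu}_e(t),\tilde{\mu}_{e'}(t))\leq\NormC{\G^N_t(e,\cdot)-\G^N_t(e',\cdot)}{0}{\R^d}\leq C|e-e'|$, the last estimate coming from the standard dependence of flows of \eqref{eq:Nonlocal_Cauchy_problem} on their controls (the controls $\tilde{u}_e$ and $\tilde{u}_{e'}$ differing, up to bounded quantities, only on a set of measure $\lesssim|e-e'|$). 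The Michel-Penot derivative is then computed through a Danskin/Demyanov-type theorem for pointwise maxima of a parametrized family (see \cite{NonsmoothPMPNeedle2}): one verifies the joint continuity of $(t,e)\mapsto\Lambda_l(t,\tilde{\mu}_e(t))$, the pointwise Fr\'echet-differentiability at $e=0$ of $e\mapsto\Lambda_l(t,\tilde{\mu}_e(t))$ — an application of Proposition \ref{prop:Chainrule} using the regularity of $\Lambda_l$ from \textbf{(H6)} — with derivative $\ell_t(\sigma)=\sum_k\sigma_k\INTDom{\langle\nabla_{\mu}\Lambda_l(t,\mu^*(t))(x),\F^{\omega_k,\tau_k}_t\circ\Phi^{v,u^*}_{(t,\tau_k)}(x)\rangle}{\R^d}{\mu^*(t)}$, and the joint continuity of $(t,\sigma)\mapsto\ell_t(\sigma)$, which relies on the continuity of $t\mapsto\nabla_{\mu}\Lambda_l(t,\mu^*(t))(\cdot)$ granted by \textbf{(H6)}. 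The theorem identifies $d_{\MP}g_l(0\,;\sigma)$ with $\max\{\ell_t(\sigma) : t\in\T_l\}$ over the compact set $\T_l$ of maximizers of $t\mapsto\Lambda_l(t,\mu^*(t))$, and a maximizing point gives a Dirac mass; normalizing to a probability measure $\varpi^N_l$ supported on $\T_l$, which coincides with the active set $\{t:\Lambda_l(t,\mu^*(t))=0\}$, yields \eqref{eq:Derivative_StateConstraint} with $\Norm{\varpi^N_l}_{TV}=1$, and finiteness of $d_{\MP}g_l(0\,;\cdot)$ gives calmness. I expect this last step — the parametrized Danskin argument, and making the pointwise chainrule uniform enough in $t$ to commute with the supremum — to be the main obstacle; the endpoint and running-cost parts are essentially corollaries of Proposition \ref{prop:Chainrule}.
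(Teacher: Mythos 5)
Your treatment of the endpoint functionals and the running cost tracks the paper's argument: Proposition \ref{prop:Chainrule} combined with the Taylor expansion \eqref{eq:TaylorExpansion_Needle_General} gives \eqref{eq:Derivative_FC}, and the decomposition of $\int L\,\textnormal{d}t$ into a needle-interval contribution (handled via Lebesgue points) and a $\mu$-perturbation contribution (handled via chainrule + dominated convergence) gives \eqref{eq:Derivative_RunningCost}. The paper organizes this last split slightly differently — it subtracts $L(t,\tilde\mu_e(t),u^*(t))$ algebraically rather than restricting the integration domain — but the two decompositions differ only by an $O(|e|^2)$ term and both are sound.

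The state-constraint case has a genuine gap. Your Danskin/Demyanov route yields $d_{\MP}g_l(0\,;\sigma)=\max_{t\in\T_l}\ell_t(\sigma)$, a \emph{maximum over the active set}, which is convex but not linear in $\sigma$; it therefore cannot be written as $\int_0^T\ell_t(\sigma)\,\textnormal{d}\varpi^N_l(t)$ for a single $\sigma$-independent measure $\varpi^N_l$ unless $\T_l$ is a singleton. Your proposed fix — ``a maximizing point gives a Dirac mass; normalizing to a probability measure $\varpi^N_l$ supported on $\T_l$'' — does not produce the object the lemma asserts: a Dirac at a maximizer $t^*(\sigma)$ depends on $\sigma$ and has support $\{t^*(\sigma)\}$, not $\T_l$. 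The paper avoids this entirely by viewing $g_l$ as the composition of the Fr\'echet-differentiable (uniformly in $t$) map $e\mapsto\Lambda_l(\cdot,\tilde\mu_e(\cdot))\in C^0([0,T])$ with the locally convex, locally Lipschitz functional $\gamma\mapsto\max_t\gamma(t)$, then invoking the Michel--Penot composition rule of Proposition \ref{prop:Prop_MPsubdiff}-(c) together with the classical characterization of the convex subdifferential of the max-functional on $C^0([0,T])$ (from \cite{IoffeTihomirov}) as the set of Borel probability measures supported on the argmax set. This immediately exhibits $\varpi^N_l\in\M_+([0,T])$ with $\Norm{\varpi^N_l}_{TV}=1$ and the stated support property, which is the form actually consumed by the Lagrange multiplier rule in Step 2. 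To close your proof you would need to replace the Danskin step by exactly this composition-plus-convex-subdifferential argument.
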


\begin{proof}
The calmness property of the maps $e \mapsto \varphi(\tilde{\mu}_e(T)),~ \Psi^E(\tilde{\mu}_e(T)),~ \Psi^I(\tilde{\mu}_T(e))$ and $e \mapsto \INTSeg{L(t,\tilde{\mu}_e(t),\tilde{u}_e(t))}{t}{0}{T}$ at $e=0$ stems from the fact that they are compositions of Fr\'echet-differentiable and locally Lipschitz mappings as a by-product of hypotheses \textbf{(H4)} and \textbf{(H5)} and Lemma \ref{lem:First_order_needle_General}. The differentials of the final cost and boundary constraints can be computed with a direct application of Proposition \ref{prop:Chainrule}.

We split the computation of the first-order variation at $e=0$ of the running cost functional into two parts. One can first derive that
\begin{equation*}
\begin{aligned}
\INTSeg{\Big( L(t,\tilde{\mu}_e(t),\tilde{u}_e(t)) - L(t,\tilde{\mu}_e(t),u^*(t)) \Big)}{t}{0}{T} & = \sum_{k=1}^N \INTSeg{\Big( L(t,\tilde{\mu}_e(t),\omega_k) - L(t,\tilde{\mu}_e(t),u^*(t)) \Big)}{t}{\tau_k-e_k}{\tau_k} \\
& = \sum_{k=1}^N e_k \Big( L(\tau_k,\tilde{\mu}_e(\tau_k),\omega_k) - L(\tau_k,\tilde{\mu}_e(\tau_k),u^*(\tau_k)) \Big) + o(|e|) \\
& = \sum_{k=1}^N e_k \Big( L(\tau_k,\mu^*(\tau_k),\omega_k) - L(\tau_k,\mu^*(\tau_k),u^*(\tau_k)) \Big) + o(|e|), \\
\end{aligned}
\end{equation*}  
by Lebesgue's Differentiation Theorem, since $\tau_k$ is a Lebesgue points of $t \in [0,T] \mapsto L(t,\mu^*(t),\cdot) \in C^0(U,\R)$ and the maps $e \mapsto L(\tau_k,\tilde{\mu}_e(\tau_k),u^*(\tau_k))$ and $e \mapsto L(\tau_k,\tilde{\mu}_e(\tau_k),\omega_k)$ are continuous for all $k \in \{1,\dots,N\}$. Furthermore, invoking the Wasserstein chainrule of Proposition \ref{prop:Chainrule} along with the results of Lemma \ref{lem:First_order_needle_General}, we have that 
\begin{equation*}
L(t,\tilde{\mu}_e(t),u^*(t)) - L(t,\mu^*(t),u^*(t)) = \sum_{k=1}^N e_k \INTDom{\left\langle \nabla_{\mu} L(t,\mu^*(t),u^*(t))(x) , \F^{\omega_k,\tau_k}_t \circ \Phi^{v,u^*}_{(t,\tau_k)}(x) \right\rangle}{\R^d }{\mu^*(t)(x)} + o(|e|).
\end{equation*}
for $\Lcal^1$-almost every $t \in [0,T]$. Combining these expansions with an application of Lebesgue's Dominated Convergence Theorem yields \eqref{eq:Derivative_RunningCost}. 

We now turn our attention to the state constraints functionals. By hypothesis \textbf{(H6)} and Proposition \ref{prop:Chainrule}, the maps $e \mapsto \Lambda_l(t,\tilde{\mu}_e(t))$ are Fr\'echet-differentiable at $e=0$ for any $l \in \{1,\dots,r\}$, uniformly with respect to $t \in [0,T]$. Moreover, the functional 
\begin{equation*}
\gamma \in C^0([0,T],\R) \mapsto \max_{t \in [0,T]} \gamma(t)
\end{equation*}
is locally convex and therefore locally Lipschitz over $C^0([0,T],\R)$. Hence, the maps $e \mapsto \max_{t \in [0,T]} \Lambda_l(t,\tilde{\mu}_e(t))$ are calm at $e=0$ as compositions of Fr\'echet-differentiable and locally Lipschitz mappings. By Proposition \ref{prop:Prop_MPsubdiff}-(c), we can compute their Michel-Penot derivatives in a direction $\sigma \in [0,\bar{\epsilon}_N]^N$ as follows
\begin{equation*}
d_{\MP} \left( \max_{t \in [0,T]} \Lambda_l(t,\tilde{\mu}_e(t)) \right)(0 \, ;\sigma) = \INTSeg{\nabla_e \Big( \Lambda_l(t,\tilde{\mu}_e(t)) \Big)_{\vert e=0}(\sigma)}{\varpi_l^N(t)}{0}{T}
\end{equation*}
where $\varpi_l^N \in \M_+([0,T])$ belongs to the convex subdifferential of the $C^0([0,T],\R)$-norm evaluated at $\Lambda_l(\cdot,\mu^*(\cdot))$. This subdifferential can be classically characterized (see e.g. \cite[Section 4.5.1]{IoffeTihomirov}) as the set of Borel regular measures such that
\begin{equation*}
\Norm{\varpi_l^N}_{TV} = 1 \hspace{0.2cm} \text{and} \hspace{0.2cm} \supp(\varpi_l^N) = \Big\{ t \in [0,T] ~\text{s.t.}~ \Lambda_l(t,\mu^*(t)) = 0 \Big\}.
\end{equation*}
Invoking again Proposition \ref{prop:Chainrule} and Lemma \ref{lem:First_order_needle_General}, we can write the differential of $e \mapsto \Lambda(t,\tilde{\mu}_e(t))$ at $e=0$ evaluated in a direction $\sigma \in [0,\bar{\epsilon}_N]^N$ as 
\begin{equation*}
\nabla_e \Big( \Lambda_l(t,\tilde{\mu}_e(t)) \Big)_{\vert e=0}(\sigma) = \sum_{k=1}^N \sigma_k \INTDom{\left\langle \nabla_{\mu} \Lambda_l(t,\mu^*(t))(x) , \F^{\omega_k,\tau_k}_t \circ \Phi^{v,u^*}_{(t,\tau_k)}(x) \right\rangle}{\R^d}{\mu^*(t)(x)}
\end{equation*}
for all times $t \in [0,T]$. Using the measure-theoretic version of Lebesgue's Dominated Convergence Theorem (see e.g. \cite[Theorem 1.21]{AmbrosioFuscoPallara}), we conclude that \eqref{eq:Derivative_StateConstraint} holds as well, which ends the proof of Lemma \ref{lem:Derivative_RCSC}. 
\end{proof}

Using the results of Lemma \ref{lem:Derivative_RCSC}, we can apply the Lagrange multiplier rule of Theorem \ref{thm:Lagrange_MP} to $(\Ppazo_N)$ and obtain the existence of scalar multipliers $(\lambda_0^N,\lambda_1^N,\dots,\lambda_n^N,\eta_1^N,\dots,\eta_m^N,\theta_1^N,\dots\theta_r^N) \in \{0,1\} \times \R^n_+ \times \R^m \times \R_+^r$ such that 
\begin{equation*}
\left\{
\begin{aligned}
& 0 \in \partial_{\MP} \Bigg( \lambda_0^N \varphi(\tilde{\mu}_e(T)) + \lambda_0^N \INTSeg{L(t,\tilde{\mu}_e(t),\tilde{u}_e(t)) }{t}{0}{T} + \sum_{l=1}^r \theta_l^N \max_{t \in [0,T]} \Lambda_l(t,\tilde{\mu}_e(t)), ~~ & \textnormal{(S)} \\
& \hspace{4.725cm} + \sum\limits_{i = 1}^n \lambda_i^N \Psi^I_i(\tilde{\mu}_e(T)) + \sum\limits_{j = 1}^m \eta_j^N \Psi^E_j(\tilde{\mu}_e(T)) \Bigg)_{\vert e=0} + \Ncal([0,\bar{\epsilon}_N]^N,0) \\
& \lambda_0^N + \sum\limits_{i = 1}^n \lambda_i^N + \sum\limits_{j = 1}^m |\eta_j^N| + \sum\limits_{l = 1}^r \theta_l^N = 1,  ~~  & \textnormal{(NT)} \\
& \theta_l^N \max_{t \in [0,T]} \Lambda_l(t,\mu^*(t)) = 0 ~~ \text{for all $l \in \{1,\dots,r\}$,} ~~ & \textnormal{(CS)} \\
& \hspace{1.05cm} \lambda_i^N \Psi^I_i(\mu^*(T)) = 0 ~~ \text{for all $i \in \{ 1,\dots,n\}$.} \\
\end{aligned}
\right.
\end{equation*}

Since all the functions involved in the subdifferential inclusion (S) are calm, we can use the summation rule of Proposition \ref{prop:Prop_MPsubdiff}-(b) along with the characterization of MP-subdifferentials for Fr\'echet-differentiable functionals stated in Proposition \ref{prop:Prop_MPsubdiff}-(a) to obtain that 
\begin{equation*}
\begin{aligned}
- \nabla_e \left( \lambda^0_N \varphi(\tilde{\mu}_e(T))  + \lambda_0^N \INTSeg{L(t,\tilde{\mu}_e(t),\tilde{u}_e(t)) }{t}{0}{T} + \sum\limits_{i = 1}^n \lambda_i^N \Psi^I_i(\tilde{\mu}_e(T)) + \sum\limits_{j = 1}^m \eta_j^N \Psi^E_j(\tilde{\mu}_e(T))  \right)_{\vert e=0} & \\
\in \sum_{l=1}^r \theta^N_l \partial_{\MP} \left( \max_{t \in [0,T]} \Lambda_l(t,\tilde{\mu}_e(t)) \right)_{|e=0}&  + \Ncal([0,\bar{\epsilon}_N]^N,0). 
\end{aligned}
\end{equation*}
By combining the expressions of the gradients \eqref{eq:Derivative_FC}, \eqref{eq:Derivative_RunningCost} and the MP-derivative \eqref{eq:Derivative_StateConstraint} derived in Lemma \ref{lem:Derivative_RCSC},  along with the composition rule of Proposition \ref{prop:Prop_MPsubdiff}-(c) for MP-subdifferentials, we obtain that
\begin{equation*}
\begin{aligned}
& \sum_{k=1}^N \sigma_k \INTDom{\langle -\nabla_{\mu} \Scal_N(\mu^*(T))(x) , \F^{\omega_k,\tau_k}_T \circ \Phi^{v,u^*}_{(T,\tau_k)}(x) \rangle}{\R^d}{\mu^*(T)(x)} \\
- & \sum_{k=1}^N \sigma_k  \lambda_0^N \Big( L(\tau_k,\mu^*(\tau_k),u^*(\tau_k)) - L(\tau_k,\mu^*(\tau_k),\omega_k) \Big) \\
- & \sum_{k=1}^N \sigma_k  \INTSeg{\INTDom{ \left\langle \lambda_0^N \nabla_{\mu} L(t,\mu^*(t),u^*(t))(x) , \F^{\omega_k,\tau_k}_t \circ \Phi^{v,u^*}_{(t,\tau_k)}(x) \right\rangle}{\R^d}{\mu^*(t)(x)}}{t}{\tau_k}{T} \\
- & \sum_{k=1}^N \sigma_k \sum_{l=1}^r \theta_l^N \INTSeg{\INTDom{\left\langle \nabla_{\mu} \Lambda_l(t,\mu^*(t))(x) , \F^{\omega_k,\tau_k}_t \circ \Phi^{v,u^*}_{(t,\tau_k)}(x) \right\rangle}{\R^d}{\mu^*(t)(x)}}{\varpi_l^N(t)}{\tau_k}{T} \leq 0 
\end{aligned}
\end{equation*}
for any direction $\sigma \in [0,\bar{\epsilon}_N]^N$, where $\nabla_{\mu} \Scal_N(\mu^*(T))(\cdot)$ is defined as in \eqref{eq:FinalGradient_General}. By choosing particular vectors $\sigma \in [0,\bar{\epsilon}_N]^N$ which have all their components except one equal to $0$, this family of inequalities can be rewritten as 
\begin{equation}
\label{eq:OptimalityConditions_General}
\begin{aligned}
& \INTDom{\langle -\nabla_{\mu} \Scal_N(\mu^*(T))(x) , \F^{\omega_k,\tau_k}_T \circ \Phi^{v,u^*}_{(T,\tau_k)}(x) \rangle}{\R^d}{\mu^*(T)(x)} - \lambda_0^N \Big( L(\tau_k,\mu^*(\tau_k),u^*(\tau_k)) - L(\tau_k,\mu^*(\tau_k),\omega_k) \Big) \\
- & \INTSeg{\INTDom{ \left\langle \lambda_0^N \nabla_{\mu} L(t,\mu^*(t),u^*(t))(x) , \F^{\omega_k,\tau_k}_t \circ \Phi^{v,u^*}_{(t,\tau_k)}(x) \right\rangle}{\R^d}{\mu^*(t)(x)}}{t}{\tau_k}{T} \\
- & \sum_{l=1}^r \INTSeg{\INTDom{\left\langle \nabla_{\mu} \Lambda_l(t,\mu^*(t))(x) , \F^{\omega_k,\tau_k}_t \circ \Phi^{v,u^*}_{(t,\tau_k)}(x) \right\rangle}{\R^d}{\mu^*(t)(x)}}{\varpi_l^N(t)}{\tau_k}{T} \leq 0
\end{aligned}
\end{equation}
for all $k \in \{1,\dots,N\}$, where we redefined the notation $\varpi_l^N \equiv \theta_l^N \varpi_l^N$.


\bigskip

\begin{flushleft}
\underline{\textbf{Step 3 : Backward dynamics and partial Pontryagin maximization condition}}
\end{flushleft}

\bigskip

The next step of our proof is to introduce a suitable notion of state-costate variable transporting the family of inequalities \eqref{eq:OptimalityConditions_General} derived at time $T$ to the base points $(\tau_1,\dots,\tau_N)$ of the needle-like variations while generating a Hamiltonian dynamical structure. To this end, we build for all $N \geq 1$ a curve $\nu^*_N(\cdot) \in \Lip([0,T],\Pcal_c(\R^{2d}))$ solution of the forward-backward system of continuity equations
\begin{equation}
\label{eq:HamiltonianFlow_General}
\left\{
\begin{aligned}
& \partial_t \nu_N^*(t) + \nabla \cdot (\V_N^*[\nu_N^*(t)](t,\cdot,\cdot)\nu_N^*(t)) = 0 ~~ & \text{in $[0,T] \times \R^{2d}$}, \\
& \pi^1_{\#} \nu_N^*(t) = \mu^*(t) ~~ & \text{for all $t \in [0,T]$}, \\
& \nu^*_N(T) = \big( \Id \times (-\nabla_{\mu} \Scal_N(\mu^*(T))) \big)_{\#} \mu^*(T).
\end{aligned}
\right.
\end{equation} 
Here, the non-local velocity field $\V^*_N[\cdot](\cdot,\cdot,\cdot)$ is given for $\Lcal^1$-almost every $t \in [0,T]$ any $(x,r,\nu) \in \R^{2d} \times \Pcal_c(\R^{2d})$ by 
\begin{equation*}
\V^*_N[\nu](t,x,r) = \begin{pmatrix}
v[\pi^1_{\#} \nu](t,x) + u^*(t,x) \\ \\
\lambda_0^N \nabla_{\mu} L(t,\pi^1_{\#}\nu,u^*(t))(x) + \nabla_{\mu} \Ccal(t,\pi^1_{\#}\nu,\zeta^*_N(t),u^*(t))(x) \\ - \BGamma^v[\nu](t,x) - \D_x v[\pi^1_{\#} \nu](t,x)^{\top} r - \D_x u^*(t,x)^{\top} r
\end{pmatrix},
\end{equation*}
where we introduced the notation
\begin{equation}
\label{eq:BGammaBis_def}
\BGamma^v[\nu](t,x) = \INTDom{\Big( \BGamma^v_{(t,y)}(x) \Big)^{\top} p \, }{\R^{2d}}{\nu(y,p)}.
\end{equation}

Notice that the transport equation \eqref{eq:HamiltonianFlow_General} does not satisfy the classical hypotheses of Theorem \ref{thm:Nonlocal_PDE}. Following a methodology introduced in our previous work \cite{PMPWass}, it is possible to circumvent this difficulty by building explicitly a solution of \eqref{eq:HamiltonianFlow_General} relying on the \textit{cascade structure} of the equations. 

\begin{lem}[Definition and well-posedness of solutions of \eqref{eq:HamiltonianFlow_General}]
\label{lem:Wellposedness_General} Let $(u^*(\cdot),\mu^*(\cdot))$ be an optimal pair control-trajectory for $(\Ppazo)$. For $\mu^*(T)$-almost every $x \in \R^d$, we consider the family of backward flows $(\Psi^{x,N}_{(T,t)}(\cdot))_{t \leq T}$ solution of the non-local Cauchy problems
\begin{equation}
\label{eq:Marginal_TransportEq_General}
\left\{
\begin{aligned}
\partial_t w_x(t,r) & =  \lambda_0^N \nabla_{\mu} L(t,\mu^*(t),u^*(t)) \left( \Phi_{(T,t)}^{v,u^*}(x) \right) + \nabla_{\mu} \Ccal(t,\mu^*(t),\zeta_N^*(t),u^*(t)) \left( \Phi_{(T,t)}^{v,u^*}(x) \right) \\
& - \D_x u^* \left( t , \Phi_{(T,t)}^{v,u^*}(x) \right)^{\top} w_x(t,r) - \D_x v[\mu^*(t)] \left( t , \Phi_{(T,t)}^{v,u^*}(x) \right)^{\top} w_x(t,r) \\
& - \INTDom{\BGamma^v_{ \left( t , \Phi^{v,u^*}_{(T,t)}(y) \right) } \left( \Phi^{v,u^*}_{(T,t)}(x) \right)^{\top} w_y(t,p)}{\R^{2d}}{ \big( (\Id \times (-\nabla_{\mu} \Scal_N(\mu^*(T))))_{\#} \mu^*(T)  \big)(y,p)} \\
w_x(T,r) & = r,
\end{aligned}
\right.
\end{equation}
and the associated curves of measures 
\begin{equation*}
\sigma^*_{x,N} : t \mapsto \Psi^{x,N}_{(T,t)}(\cdot)_{\#} \delta_{(-\nabla_{\mu}\Scal_N(\mu^*(T))(x))}.
\end{equation*}
Define the map $\nu^*_{T,N} : t \in [0,T] \mapsto \int \sigma^*_{x,N}(t) \textnormal{d} \mu^*(T)(x) \in \Pcal_c(\R^{2d})$. Then, the curve $\nu_N^* : t \in [0,T] \mapsto (\Phi^{v,u^*}_{(T,t)}(\cdot) , \Id)_{\#} \nu^*_{T,N}(t)$ is the unique solution of \eqref{eq:HamiltonianFlow_General}. Moreover, there exists two constants $R_T',L_T' > 0$ such that
\begin{equation*}
\supp(\nu_N^*(t)) \subset \overline{B_{2d}(0,R_T')} \qquad \text{and} \qquad W_1(\nu_N^*(t),\nu_N^*(s)) \leq L_T' |t-s|, 
\end{equation*}  
for all $s,t \in [0,T]$.
\end{lem}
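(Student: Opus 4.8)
The plan is to exploit the triangular ``cascade'' structure of \eqref{eq:HamiltonianFlow_General}: the constraint $\pi^1_\#\nu_N^*(t)=\mu^*(t)$ decouples the $x$-component, whose velocity is the fixed field $v[\mu^*(t)](t,\cdot)+u^*(t,\cdot)$ with flow $\Phi^{v,u^*}_{(T,t)}$, while the $r$-component obeys a \emph{linear} non-local transport equation driven by $\mu^*(\cdot)$. The first step is to establish the well-posedness of the ODE system \eqref{eq:Marginal_TransportEq_General}. Its right-hand side depends on $r$ only affinely through $w_x(t,r)$ and through the values $w_y(t,p)$ at the fixed points $p=-\nabla_{\mu}\Scal_N(\mu^*(T))(y)$; hence, writing $\widehat{w}(t)(x):=w_x(t,-\nabla_{\mu}\Scal_N(\mu^*(T))(x))$, the map $t\mapsto\widehat{w}(t)$ satisfies a closed Carath\'eodory equation $\partial_t\widehat{w}(t)=\mathcal{A}(t)\widehat{w}(t)+b(t)$ in the Banach space $C^0(\overline{B(0,R_T)},\R^d)$, where $\mathcal{A}(t)$ is the linear operator given by pointwise multiplication by $-\D_x(v[\mu^*(t)]+u^*)(t,\Phi^{v,u^*}_{(T,t)}(\cdot))^{\top}$ plus the integral operator with kernel $\BGamma^v_{(t,\Phi^{v,u^*}_{(T,t)}(\cdot))}(\Phi^{v,u^*}_{(T,t)}(\cdot))^{\top}$, and $b(t)$ is measurable and bounded in $t$. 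By \textbf{(H1)}--\textbf{(H3)} the operator norm of $\mathcal{A}(t)$ is bounded uniformly in $t$, and by \textbf{(H4)}--\textbf{(H6)} (together with the gradient computation of Appendix \ref{appendix:ConstraintPenalization} for $\nabla_{\mu}\Ccal$) the map $b(\cdot)$ is bounded; the Cauchy--Lipschitz theorem in Banach spaces then yields a unique $\widehat{w}\in\Lip([0,T],C^0)$, a Gr\"onwall estimate gives $\|\widehat{w}(t)\|_{C^0}\le C$ with $C$ depending only on $R_T$ and the data, and variation of constants propagates the solution to all $w_x(t,r)$, so that each backward flow $\Psi^{x,N}_{(T,t)}$ is affine, hence a diffeomorphism.

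Second, I would verify that the curve $\nu_N^*(t)=(x\mapsto(\Phi^{v,u^*}_{(T,t)}(x),\widehat{w}(t)(x)))_{\#}\mu^*(T)$, which coincides with $(\Phi^{v,u^*}_{(T,t)}(\cdot),\Id)_{\#}\nu^*_{T,N}(t)$ since $\sigma^*_{x,N}(t)=\delta_{\widehat{w}(t)(x)}$ and thus $\nu^*_{T,N}(t)=(\Id\times\widehat{w}(t))_{\#}\mu^*(T)$, solves \eqref{eq:HamiltonianFlow_General}. The marginal identity $\pi^1_\#\nu_N^*(t)=(\Phi^{v,u^*}_{(T,t)})_{\#}\mu^*(T)=\mu^*(t)$ follows from the flow property and the representation formula of Theorem \ref{thm:Nonlocal_PDE}, and the terminal condition is immediate because $\widehat{w}(T)=-\nabla_{\mu}\Scal_N(\mu^*(T))$ and $\Phi^{v,u^*}_{(T,T)}=\Id$. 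For the PDE, I would check that the characteristic $t\mapsto X_t(x):=(\Phi^{v,u^*}_{(T,t)}(x),\widehat{w}(t)(x))$ satisfies $\tfrac{\textnormal{d}}{\textnormal{d}t}X_t(x)=\V^*_N[\nu_N^*(t)](t,X_t(x))$: the first line of $\V^*_N$ is reproduced by the flow equation with $\pi^1_\#\nu_N^*(t)=\mu^*(t)$, and the non-local term $\BGamma^v[\nu_N^*(t)]$ of \eqref{eq:BGammaBis_def}, evaluated by using that $\nu_N^*(t)$ is carried by the points $(\Phi^{v,u^*}_{(T,t)}(y),\widehat{w}(t)(y))$, collapses to exactly the integral term of \eqref{eq:Marginal_TransportEq_General}, so the second line matches as well. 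Since a curve of the form $(X_t)_{\#}\mu^*(T)$ with $X_{\cdot}$ solving the associated non-local ODE is a distributional solution of the continuity equation (insert $X_t$ into the weak formulation), this establishes existence.

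Third, for uniqueness I would argue that any $\nu(\cdot)\in\Lip([0,T],\Pcal_c(\R^{2d}))$ solving \eqref{eq:HamiltonianFlow_General} is forced into the form above. The linear growth of $\V^*_N$ and the compact support of the data confine $\nu(\cdot)$ to a fixed ball, and the change of variables $\beta(t):=(\Phi^{v,u^*}_{(t,T)}(\cdot),\Id)_{\#}\nu(t)$, for which $\pi^1_\#\beta(t)=\mu^*(T)$ is constant in $t$, turns \eqref{eq:HamiltonianFlow_General} into a linear non-local transport equation on $\R^{2d}$ whose $x$-velocity vanishes and whose $r$-velocity is affine in $r$ with bounded, measurable-in-$t$ coefficients. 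Disintegrating $\beta(t)=\int\beta_{t,x}\,\textnormal{d}\mu^*(T)(x)$, the velocity felt by $\beta_{t,x}$ depends on $\beta(t)$ only through the barycenters $\{\bar{\beta}_{t,y}\}_{y}$; integrating the corresponding continuity equation against $r$ shows that $t\mapsto\bar{\beta}_{t,x}$ solves precisely \eqref{eq:Marginal_TransportEq_General} with terminal datum $-\nabla_{\mu}\Scal_N(\mu^*(T))$, hence $\bar{\beta}_{t,x}=\widehat{w}(t)(x)$ by the first step. The velocity of each $\beta_{t,x}$ is then a fully determined affine field satisfying \textbf{(H')}, so Theorem \ref{thm:Nonlocal_PDE} (in backward time) forces $\beta_{t,x}=\delta_{\widehat{w}(t)(x)}$, that is $\beta(t)=\nu^*_{T,N}(t)$ and $\nu(t)=\nu_N^*(t)$.

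Finally, the stated bounds are immediate: $\supp(\nu_N^*(t))\subset\overline{B(0,R_T)}\times\overline{B(0,C)}\subset\overline{B_{2d}(0,R_T')}$ fixes $R_T'$, and $W_1(\nu_N^*(t),\nu_N^*(s))\le\|X_t-X_s\|_{C^0}\le L_T'|t-s|$ follows from the Lipschitz-in-time control of $\Phi^{v,u^*}_{(T,\cdot)}$ (bounded velocity on a fixed ball, by \textbf{(H1)}--\textbf{(H2)}) and of $\widehat{w}(\cdot)$ (bounded right-hand side in \eqref{eq:Marginal_TransportEq_General}). The step I expect to be the main obstacle is the uniqueness argument: rigorously reducing a general, a priori non-Dirac, solution to its first-marginal disintegration, verifying that the induced dynamics closes on the barycenters and reproduces \eqref{eq:Marginal_TransportEq_General}, and concluding that each conditional measure remains a Dirac. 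The existence half, as well as the a priori and Lipschitz estimates, are comparatively routine Gr\"onwall arguments once the Banach-space Cauchy--Lipschitz machinery has been set up for \eqref{eq:Marginal_TransportEq_General}.
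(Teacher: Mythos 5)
Your proposal is correct and covers the full content of the lemma; it differs from the paper's proof in emphasis rather than substance, and it also adds a genuine uniqueness argument that the paper itself does not spell out.

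For the existence/estimate half, the route is essentially the paper's: the paper solves the family \eqref{eq:Marginal_TransportEq_General} by a Banach fixed-point argument in the space of maps $w:[0,T]\times\Omega\to\R^d$ equipped with the $\sup$-norm $\Norm{w_\cdot(\cdot,\cdot)}_*$, then gets boundedness and Lipschitz-in-$(t,r)$ by Gr\"onwall, transfers the Lipschitz estimate to $\sigma^*_{x,N}(\cdot)$, and passes to $\nu^*_N(\cdot)$ via Proposition \ref{prop:Estimation_Barycenter}. Your Carath\'eodory reformulation $\partial_t\widehat w(t)=\mathcal A(t)\widehat w(t)+b(t)$ in $C^0(\overline{B(0,R_T)},\R^d)$ is the same fixed-point argument presented from the ODE side, and your explicit remark that $\sigma^*_{x,N}(t)=\delta_{\widehat w(t)(x)}$ is a clean way to see that the system closes on $\widehat w$; it also lets you skip Proposition \ref{prop:Estimation_Barycenter} and bound $W_1$ directly by $\Norm{X_t-X_s}_{C^0}$, which is a small simplification. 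For the PDE verification, you check the characteristic ODE $\tfrac{\textnormal{d}}{\textnormal{d}t}X_t=\V^*_N[\nu^*_N(t)](t,X_t)$, whereas the paper computes $\tfrac{\textnormal{d}}{\textnormal{d}t}\int\xi\,\textnormal{d}\nu^*_N(t)$ against test functions and reorganizes via Fubini; these are interchangeable once one uses the equivalence between weak solutions of continuity equations and pushforwards along flows.

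What you add over the paper is the uniqueness argument for the forward--backward system \eqref{eq:HamiltonianFlow_General}. The paper's proof establishes uniqueness only at the level of the maps $w_x$ (Banach fixed point) and then verifies by direct computation that the constructed $\nu^*_N$ is \emph{a} solution; it does not actually argue that an arbitrary Lipschitz solution of \eqref{eq:HamiltonianFlow_General} must coincide with it. Your change of variables $\beta(t)=(\Phi^{v,u^*}_{(t,T)}\circ\pi^1,\pi^2)_\#\nu(t)$, the disintegration of $\beta(t)$ on its (time-independent) first marginal $\mu^*(T)$, the observation that the non-local term depends on $\beta(t)$ only through the conditional barycenters so that these satisfy \eqref{eq:Marginal_TransportEq_General}, and the conclusion that each conditional must stay Dirac, is a correct and natural way to fill this gap. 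You rightly flag the measurability and regularity of the disintegration $t\mapsto\beta_{t,x}$ as the technically delicate step; this is indeed where care is needed (one typically works with the $\mu^*(T)$-a.e.\ defined disintegration and uses the Lipschitz-in-time bound to restrict to a countable dense set of test functions), but the outline is sound. In short: the proposal is correct, essentially aligned with the paper on construction and estimates, and strictly more complete on the uniqueness claim.
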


\begin{proof}
Let us denote by $\Omega \subset \R^{2d}$ a compact set such that 
\begin{equation*}
\bigcup_{N \geq 1} \supp \Big( \big(\Id \times (-\nabla_{\mu}\Scal_N(\mu^*(T))) \big)_{\#} \mu^*(T) \Big) \subseteq \Omega.
\end{equation*}
Such a set exists since the maps $(\nabla_{\mu} \Scal_N(\mu^*(T))(\cdot))$ are continuous by \textbf{(H4)}, as well as uniformly bounded as a consequence of the non-triviality condition (NT) on the Lagrange multipliers $(\lambda_0^N,\dots,\lambda_n^N,\eta_1^N,\dots,\eta_m^N)$.

The existence and uniqueness of the maps $(t,x,r) \mapsto w_x(t,r)$ solving the family of non-local Cauchy problems \eqref{eq:Marginal_TransportEq_General} can be obtained under hypotheses \textnormal{\textbf{(H)}}, as a consequence of Banach fixed point Theorem in the spirit of \cite[Proposition 5]{PMPWass}. In this context, the Banach space under consideration is that of all maps $w : [0,T] \times \Omega \rightarrow \R^d$ endowed with the norm
\begin{equation*}
\Norm{w_{\cdot}(\cdot,\cdot)}_* = \inf \left\{ M > 0 ~\text{s.t.}~ \NormC{w_x(\cdot,\cdot)}{0}{[0,T] \times \R^d,\R^d} \leq M ~~ \text{for $\mu^*(T)$-almost every $x \in \R^d$} \right\}.
\end{equation*}
By an application of Gr\"onwall's Lemma to \eqref{eq:Marginal_TransportEq_General}, it holds that $(t,r) \in [0,T] \times \pi^2(\Omega) \mapsto \Psi^{x,N}_{(T,t)}(r)$ is bounded by a positive constant, uniformly with respect to $x \in \supp(\mu^*(T))$ and $N \geq 1$. This follows in particular from the uniform boundedness of the sequences of multipliers $(\lambda^0_N)$ and $(\zeta^*_N(\cdot))$. Therefore, there exists a uniform constant $R_T'>0$ such that 
\begin{equation*}
\supp(\nu^*_N(t)) \subset \overline{B_{2d}(0,R_T')}, 
\end{equation*}
for all $t \in [0,T]$. This in turn implies that the right-hand side of \eqref{eq:Marginal_TransportEq_General} is uniformly bounded, so that the maps $t \in [0,T] \mapsto \Psi^{x,N}_{(T,t)}(r)$ are Lispchitz, uniformly with respect to $(x,r) \in \Omega$ and $N \geq 1$. By applying again Gr\"owall's Lemma to the difference $|\Psi^{x,N}_{(T,t)}(r_2)-\Psi^{x,N}_{(T,t)}(r_1)|$ with $r_1,r_2 \in \pi^2(\Omega)$, we further obtain that $(t,r) \in [0,T] \times \pi^2(\Omega) \mapsto \Psi^{x,N}_{(T,t)}(r)$ is also Lipschitz regular, uniformly with respect to $x \in \supp(\mu^*(T))$ and $N \geq 1$. It can be checked by leveraging Kantorovich duality in the spirit of \cite[Lemma 6]{PMPWass} that this in turn yields the Lipschitz regularity of $t \in [0,T] \mapsto \sigma^*_{x,N}(t)$ uniformly with respect to $x \in \supp(\mu^*(T))$ and $N \geq 1$. An application of Proposition \ref{prop:Estimation_Barycenter} combined with the uniform Lipschitz regularity of $(t,x) \in [0,T] \times \pi^1(\Omega) \mapsto \Phi^{v,u^*}_{(T,t)}(x)$ to $\nu^*_N(\cdot)$ provides the existence of a uniform constant $L_T' > 0$ such that
\begin{equation*}
W_1(\nu^*_N(t),\nu^*_N(s)) \leq L_T' |t-s|, 
\end{equation*}
for any $s,t \in [0,T]$.

In order to prove that $\nu^*_N(\cdot)$ is indeed a solution of \eqref{eq:HamiltonianFlow_General}, take $\xi \in C^{\infty}_c(\R^{2d})$ and compute the time derivative
\begin{equation*}
\begin{aligned}
& \derv{}{t}{} \INTDom{\xi(x,r)}{\R^{2d}}{\nu_N^*(t)(x,r)} = \derv{}{t}{} \INTDom{ \INTDom{\xi \left( \Phi^{v,u^*}_{(T,t)}(x) ,r \right)}{\R^d}{\sigma^*_{x,N}(t)(r)}}{\R^d}{\mu^*(T)(x)} \\
= ~ & \INTDom{\INTDom{\left\langle \nabla_x \xi \left( \Phi^{v,u^*}_{(T,t)}(x) ,r \right) , v[\mu^*(t)] \left( t , \Phi^{v,u^*}_{(T,t)}(x) \right) + u^* \left( t , \Phi^{v,u^*}_{(T,t)}(x) \right) \right\rangle}{\R^d}{\sigma^*_{x,N}(t)(r)}}{\R^d}{\mu^*(T)(x)} \\
+ ~ & \INTDom{\INTDom{\left\langle \nabla_r \xi \left( \Phi^{v,u^*}_{(T,t)}(x) ,r\right) ,  \lambda_0^N \nabla_{\mu} L(t,\mu^*(t),u^*(t)) \left( \Phi^{v,u^*}_{(T,t)}(x)\right) \right\rangle}{\R^d}{\sigma^*_{x,N}(t)(r)}}{\R^d}{\mu^*(T)(x)} \\
+ ~ & \INTDom{\INTDom{\left\langle \nabla_r \xi \left( \Phi^{v,u^*}_{(T,t)}(x),r \right) , \nabla_{\mu} \Ccal(t,\mu^*(t),\zeta_N^*(t),u^*(t)) \left( \Phi^{v,u^*}_{(T,t)}(x) \right) \right\rangle}{\R^d}{\sigma^*_{x,N}(t)(r)}}{\R^d}{\mu^*(T)(x)} \\
- ~ & \INTDom{\INTDom{\Big\langle \nabla_r \xi \left( \Phi^{v,u^*}_{(T,t)}(x) ,r\right) , \D_x v[\mu^*(t)]\left( t , \Phi^{v,u^*}_{(T,t)}(x) \right)^{\top} r + \D_x u^* \left( t , \Phi^{v,u^*}_{(T,t)}(x) \right)^{\top} r \Big\rangle}{\R^d}{\sigma^*_{x,N}(t)(r)}}{\R^d}{\mu^*(T)(x)}
\\
- ~ & \INTDom{\INTDom{\Big\langle \nabla_r \xi \left( \Phi^{v,u^*}_{(T,t)}(x) ,r \right) , \BGamma^v[\nu_N^*(t)] \left( t , \Phi^{v,u^*}_{(T,t)}(x) \right) \Big\rangle}{\R^d}{\sigma^*_{x,N}(t)(r)}}{\R^d}{\mu^*(T)(x)}
\end{aligned}
\end{equation*}
where we used the fact that by Fubini's Theorem
\begin{equation*}
\begin{aligned}
\BGamma^v [\nu^*_N(t)] \left( t , \Phi^{v,u^*}_{(T,t)}(x) \right) & = \INTDom{\BGamma^v_{\left( t , y \right)} \left( \Phi^{v,u^*}_{(T,t)}(x) \right)^{\top} p \,}{\R^{2d}}{\nu^*_N(t)(y,p)} \\
& = \INTDom{\BGamma^v_{\left( t , \Phi^{v,u^*}_{(T,t)}(y) \right)} \left( \Phi^{v,u^*}_{(T,t)}(x) \right)^{\top} p \, }{\R^{2d}}{\nu^*_{T,N}(t)(y,p)} \\
& = \INTDom{ \INTDom{\BGamma^v_{\left( t , \Phi^{v,u^*}_{(T,t)}(y) \right)} \left( \Phi^{v,u^*}_{(T,t)}(x) \right)^{\top} p \, }{\R^d}{\sigma_{y,N}^*(t)(p)}}{\R^d}{\mu^*(T)(y)} \\
& = \INTDom{\BGamma^v_{ \left( t , \Phi^{v,u^*}_{(T,t)}(y) \right) } \left( \Phi^{v,u^*}_{(T,t)}(x) \right)^{\top} \Psi^{y,N}_{(T,t)}(p) }{\R^{2d}}{\big( (\Id \times (-\nabla_{\mu} \Scal_N(\mu^*(T))))_{\#} \mu^*(T)  \big)(y,p)}.
\end{aligned}
\end{equation*}
This can in turn be reformulated into the more concise expression 
\begin{equation*}
\begin{aligned}
\derv{}{t}{} \INTDom{\xi(x,r)}{\R^{2d}}{\nu_N^*(t)(x,r)} & = \INTDom{\langle \nabla_x \xi(x,r) , v[\mu^*(t)](t,x) + u^*(t,x) \rangle}{\R^{2d}}{\nu_N^*(t)(x,r)} \\
& + \INTDom{\langle \nabla_r \xi(x,r) , \lambda^N_0 \nabla_{\mu} L(t,\mu^*(t),u^*(t))(x) \rangle}{\R^{2d}}{\nu_N^*(t)(x,r)} \\
& + \INTDom{\langle \nabla_r \xi(x,r) , \nabla_{\mu} \Ccal(t,\mu^*(t),\zeta^*_N(t),u^*(t))(x) \rangle}{\R^{2d}}{\nu_N^*(t)(x,r)} \\
& - \INTDom{\langle \nabla_r \xi(x,r) , \D_x v[\mu^*(t)](t,x)^{\top}r + \D_x u^*(t,x)^{\top}r + \BGamma^v [\nu_N^*(t)] \left( t , x \right) \rangle}{\R^{2d}}{\nu_N^*(t)(x,r)}, 
\end{aligned}
\end{equation*}
which by \eqref{eq:NonlocalPDE_distributions2} precisely corresponds to the fact that $\nu^*_N(\cdot)$ is a solution of \eqref{eq:HamiltonianFlow_General}.
\end{proof}

\begin{rmk}[Wasserstein and classical costates]
In the finite-dimensional proof of the Gamkrelidze PMP, the optimal costates $p_N^*(\cdot)$ are defined as the solutions of the backward equations
\begin{equation}
\label{eq:ClassicalCostateIllustration}
\dot p_N^*(t) = -\nabla_x \Hpazo_{\lambda_0^N}(t,x^*_N(t),p_N^*(t),\zeta^*_N(t),u^*(t)), \qquad p_N^*(T) = -\nabla \pazocal{S}_N(x^*(T)), 
\end{equation}
where $\Hpazo_{\lambda_0^N}(\cdot,\cdot,\cdot,\cdot,\cdot)$ and $\Spazo_N(\cdot)$ are the counterparts of $\Hcal_{\lambda_0^N}(\cdot,\cdot,\cdot,\cdot)$ and $\Scal_N(\cdot)$ associated with the finite-dimensional optimal control problem
\begin{equation*}
\left\{
\begin{aligned}
\underset{u \in \U}{\min} & \left[ \INTSeg{L(t,x(t),u(t))}{t}{0}{T} + \varphi(x(T)) \right], \\
\textnormal{s.t.} 
& \left\{ 
\begin{aligned}
& \dot x(t) = v(t,x(t)) + u(t), \\
& x(0) = x^0 \in \R^d,
\end{aligned}
\right. \\
\textnormal{and}
& \left\{
\begin{aligned}
\Psi^I(x(T)) & \leq 0,~ \Psi^E(x(T)) = 0 ,  \\
\Lambda(t,x(t)) & \leq 0 ~~ \text{for all $t \in [0,T]$, }
\end{aligned}
\right.
\end{aligned}
\right.
\end{equation*}
and defined by 
\begin{equation*}
\left\{
\begin{aligned}
& \Hpazo_{\lambda_0^N}(t,x,p,\zeta,\omega) = \langle p , v(t,x) + \omega \rangle - \lambda^N_0 L(t,x,\omega) - \Cpazo(t,x,\zeta,\omega), \\
& \Spazo_N(x) = \lambda_0^N \varphi(x) + \sum_{i=1}^n \lambda_i^N \Psi_i^I(x) + \sum_{j=1}^m \eta_j^N \Psi_j^E(x).
\end{aligned}
\right.
\end{equation*}

In our statement of the PMP, one should think of $\pi^2_{\#} \nu^*(\cdot)$ as being concentrated on the characteristic curves of the backward costate dynamics. Indeed in Lemma \ref{lem:Wellposedness_General}, the curves $\sigma^*_{x,N}(\cdot)$ are concentrated on the unique characteristic of the linearized backward non-local dynamics \eqref{eq:Marginal_TransportEq_General} starting from $(-\nabla_{\mu} \Scal_N(\mu^*(T))(x))$ for $\mu^*(T)$-almost every $x \in \R^d$. The second marginal of the curve $\nu^*_{T,N}(\cdot) = \int \sigma^*_{x,N}(\cdot)\mu^*(T)(x)$ can then be seen as a Lagrangian superposition of integral curves of \eqref{eq:ClassicalCostateIllustration} depending on the starting point of the curve in $\supp(\mu^*(T))$. 
\end{rmk}

Now that we have built a suitable notion of solution for \eqref{eq:HamiltonianFlow_General}, let us prove that $\nu^*_N(\cdot)$ is such that the PMP holds with a relaxed maximization condition formulated over the collection of needle parameters $\{(\omega_k,\tau_k) \}_{k=1}^N \subset U_D \times \Acal$. 

\begin{lem}[A partial Pontryagin Maximum Principle]
\label{lem:Partial_Maximization_General}
For any $N \geq 1$, the curve of measures $\nu^*_N(\cdot)$ introduced in Lemma \ref{lem:Wellposedness_General} is a solution of the Hamiltonian flow \eqref{eq:PMP_HamiltonianFlow_General} associated to the Hamiltonian $\Hcal_{\lambda_0^N}(\cdot,\cdot,\cdot,\cdot)$ and to the Lagrange multipliers $(\lambda_0^N,\dots,\lambda_n^N,\dots,\eta_1^N,\dots,\eta_m^N,\varpi_1^N,\dots,\varpi_r^N) \in \{0,1\} \times \R_+^n \times \R^m \times \M_+([0,T])^r$. Moreover, the relaxed maximization condition 
\begin{equation}
\label{eq:Partial_Maximization_General}
\begin{aligned}
& \Hcal_{\lambda_0^N}(\tau_k,\nu_N^*(\tau_k),\zeta_N^*(\tau_k),\omega_k) - \sum_{l = 1}^r \varpi_l^N(\{ \tau_k \}) \INTDom{\langle \nabla_{\mu} \Lambda_l(\tau_k,\mu^*(\tau_k))(x) , \omega_k \rangle}{\R^d}{\mu^*(\tau_k)(x)} \\
\leq ~ & \Hcal_{\lambda_0^N}(\tau_k,\nu_N^*(\tau_k),\zeta_N^*(\tau_k),u^*(\tau_k)) - \sum_{l = 1}^r \varpi_l^N(\{ \tau_k \}) \INTDom{\langle \nabla_{\mu} \Lambda_l(\tau_k,\mu^*(\tau_k))(x) , u^*(\tau_k,x) \rangle}{\R^d}{\mu^*(\tau_k)(x)}
\end{aligned}
\end{equation}
holds for all $\{(\omega_k,\tau_k)\}_{k=1}^N \subset U_D \times \Acal$. 
\end{lem}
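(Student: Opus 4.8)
The plan is to derive item (i) by a direct identification of the driving field of \eqref{eq:HamiltonianFlow_General} with the symplectic gradient of the Hamiltonian \eqref{eq:Hamiltonian_General}, and to derive item (ii) by transporting the finite-dimensional optimality conditions \eqref{eq:OptimalityConditions_General} backward in time, from $T$ to each base point $\tau_k$, through a duality pairing of the costate $\pi^2_{\#}\nu^*_N(\cdot)$ with the needle perturbations $\F^{\omega_k,\tau_k}$. For item (i), Lemma \ref{lem:Wellposedness_General} has already established that $\nu^*_N(\cdot)$ is the unique solution of \eqref{eq:HamiltonianFlow_General} with the prescribed first marginal and terminal datum, so it suffices to check $\V^*_N[\cdot](t,\cdot,\cdot)=\J_{2d}\nabla_{\nu}\Hcal_{\lambda_0^N}(t,\cdot,\zeta^*_N(t),u^*(t))$. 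I would compute $\nabla_{\nu}\Hcal_{\lambda_0^N}$ blockwise on $\R^{2d}=\R^d_x\times\R^d_r$: in the $r$-direction it equals $v[\pi^1_{\#}\nu](t,x)+\omega(x)$, because $-\lambda_0 L$ and $-\Ccal$ depend on $\nu$ only through $\pi^1_{\#}\nu$; in the $x$-direction it collects $\D_x v[\pi^1_{\#}\nu](t,x)^{\top}r+\D_x\omega(x)^{\top}r$ from the explicit integrand, $\BGamma^v[\nu](t,x)$ of \eqref{eq:BGammaBis_def} from the dependence of $v$ on $\pi^1_{\#}\nu$ (via the disintegration identity of Definition \ref{def:Barycenter}), and $-\lambda_0\nabla_{\mu}L(t,\pi^1_{\#}\nu,\omega)(x)-\nabla_{\mu}\Ccal(t,\pi^1_{\#}\nu,\zeta,\omega)(x)$. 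Multiplying by $\J_{2d}$ interchanges the two blocks with a sign change and returns $\V^*_N$ when $\omega=u^*(t)$, $\zeta=\zeta^*_N(t)$; the support and Lipschitz bounds, the support of $\varpi^N_l=\theta^N_l\varpi^N_l$ inside $\{t:\Lambda_l(t,\mu^*(t))=0\}$, the slackness $\lambda_i^N\Psi_i^I(\mu^*(T))=0$, and non-triviality then come straight from Lemma \ref{lem:Wellposedness_General}, Lemma \ref{lem:Derivative_RCSC} and the conditions (S), (NT), (CS) of the multiplier rule applied to $(\Ppazo_N)$.

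For item (ii), fix $k$ and set, for $t\in[\tau_k,T]$, $\Xi_k(t):=\INTDom{\langle r,\F^{\omega_k,\tau_k}_t\circ\Phi^{v,u^*}_{(t,\tau_k)}(x)\rangle}{\R^{2d}}{\nu^*_N(t)(x,r)}$; write also $\ell_k(t)$ and $\beta_{l,k}(t)$ for the running-cost and state-constraint first-order sensitivities $\INTDom{\langle\nabla_{\mu}L(t,\mu^*(t),u^*(t))(x),\F^{\omega_k,\tau_k}_t\circ\Phi^{v,u^*}_{(t,\tau_k)}(x)\rangle}{\R^d}{\mu^*(t)(x)}$ and $\INTDom{\langle\nabla_{\mu}\Lambda_l(t,\mu^*(t))(x),\F^{\omega_k,\tau_k}_t\circ\Phi^{v,u^*}_{(t,\tau_k)}(x)\rangle}{\R^d}{\mu^*(t)(x)}$ appearing in Lemma \ref{lem:Derivative_RCSC}. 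Using the characteristic description of $\nu^*_N(t)$ from Lemma \ref{lem:Wellposedness_General} together with the flow identity $\Phi^{v,u^*}_{(t,\tau_k)}\circ\Phi^{v,u^*}_{(T,t)}=\Phi^{v,u^*}_{(T,\tau_k)}$, one rewrites $\Xi_k(t)=\INTDom{\langle w_x(t,r),\F^{\omega_k,\tau_k}_t\circ\Phi^{v,u^*}_{(T,\tau_k)}(x)\rangle}{\R^d}{\mu^*(T)(x)}$ with $r=-\nabla_{\mu}\Scal_N(\mu^*(T))(x)$, so that $\Xi_k(T)$ is exactly the first term of \eqref{eq:OptimalityConditions_General}. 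Differentiating $\Xi_k$ in $t$ and substituting the backward ODE \eqref{eq:Marginal_TransportEq_General} for $\partial_t w_x$ and the forward sensitivity ODE \eqref{eq:FirstOrder_Needle_Expression_General} for $\partial_t\F^{\omega_k,\tau_k}_t$ (both pulled back along $\Phi^{v,u^*}_{(T,\cdot)}$), the contributions carrying $\D_x v[\mu^*(t)]$ and $\D_x u^*$ cancel pointwise, being mutually adjoint, and the two double integrals carrying $\BGamma^v$ cancel after a change of variables moving the $\mu^*(\tau_k)$-integral to a $\mu^*(T)$-integral and interchanging the two dummy points; this leaves $\derv{}{t}{}\Xi_k(t)=\lambda_0^N\ell_k(t)+\INTDom{\langle\nabla_{\mu}\Ccal(t,\mu^*(t),\zeta^*_N(t),u^*(t))(x),\F^{\omega_k,\tau_k}_t\circ\Phi^{v,u^*}_{(t,\tau_k)}(x)\rangle}{\R^d}{\mu^*(t)(x)}$, i.e.\ precisely the running-cost and constraint sensitivities occurring in \eqref{eq:OptimalityConditions_General}.

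Since $\zeta^*_N(t)$ is frozen and $\Ccal(t,\mu,\zeta,\omega)=\sum_l\zeta_l\kappa_l(t,\mu,\omega)$ with $\kappa_l(t,\mu,\omega)=\partial_t\Lambda_l(t,\mu)+\INTDom{\langle\nabla_{\mu}\Lambda_l(t,\mu)(x),v[\mu](t,x)+\omega(x)\rangle}{\R^d}{\mu(x)}$, the differential identity $\derv{}{t}{}\Lambda_l(t,\tilde\mu_e(t))=\kappa_l(t,\tilde\mu_e(t),u^*(t))$ valid for $t$ outside the needle interval, differentiated in the $k$-th needle parameter, yields $\INTDom{\langle\nabla_{\mu}\Ccal(t,\mu^*(t),\zeta^*_N(t),u^*(t))(x),\F^{\omega_k,\tau_k}_t\circ\Phi^{v,u^*}_{(t,\tau_k)}(x)\rangle}{\R^d}{\mu^*(t)(x)}=\sum_l\zeta^*_{l,N}(t)\,\derv{}{t}{}\beta_{l,k}(t)$. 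Integrating $\derv{}{t}{}\Xi_k$ over $[\tau_k,T]$ and performing a Fubini-type integration by parts with the convention $\zeta^*_{l,N}(t)=\varpi^N_l((t,T])$ turns the state-constraint part into $\sum_l\big(\INTSeg{\beta_{l,k}(t)}{\varpi^N_l(t)}{\tau_k}{T}-\varpi^N_l([\tau_k,T])\,\beta_{l,k}(\tau_k)\big)$; feeding the resulting formula for $\Xi_k(T)$ into \eqref{eq:OptimalityConditions_General} makes the running-cost and state-constraint integrals over $[\tau_k,T]$ cancel identically, and what remains is an inequality at $t=\tau_k$ involving $\Xi_k(\tau_k)=\INTDom{\langle r,\omega_k(x)-u^*(\tau_k,x)\rangle}{\R^{2d}}{\nu^*_N(\tau_k)(x,r)}$ (using $\F^{\omega_k,\tau_k}_{\tau_k}=\omega_k-u^*(\tau_k,\cdot)$), the needle-induced jump of $L$ at $\tau_k$, and the atoms $\varpi^N_l(\{\tau_k\})$. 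Expanding $\Hcal_{\lambda_0^N}(\tau_k,\nu^*_N(\tau_k),\zeta^*_N(\tau_k),\omega_k)-\Hcal_{\lambda_0^N}(\tau_k,\nu^*_N(\tau_k),\zeta^*_N(\tau_k),u^*(\tau_k))$, whose $\omega$-dependent $\Ccal$-part is exactly $\sum_l\zeta^*_{l,N}(\tau_k)\beta_{l,k}(\tau_k)$ and merges with $\sum_l\varpi^N_l(\{\tau_k\})\beta_{l,k}(\tau_k)$ into the coefficient $\varpi^N_l([\tau_k,T])$, identifies this inequality with \eqref{eq:Partial_Maximization_General}; since $(\omega_k,\tau_k)\in U_D\times\Acal$ was arbitrary, the relaxed maximization holds.

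\emph{Main obstacle.} The crux is the discrete bookkeeping at the end: tracking the atoms of the measures $\varpi^N_l$ sitting at the base points $\tau_k$ and pinning down the one-sided convention of $\zeta^*_{l,N}$ at $\tau_k$, so that the boundary terms produced by the integration by parts recombine with the $\omega$-dependent part of $\Ccal$ inside the Hamiltonian into precisely the correction term of \eqref{eq:Partial_Maximization_General}, and threading the sign of the needle-induced ``direct'' running-cost jump recorded in Lemma \ref{lem:Derivative_RCSC} consistently through the whole computation. A secondary, more routine, point is justifying the differentiation of $t\mapsto\Xi_k(t)$ and of $\beta_{l,k}$ under the integral sign, as well as the Fubini interchange against the finite, possibly atomic, measures $\varpi^N_l$; both follow from the uniform support and Lipschitz bounds of Lemma \ref{lem:Wellposedness_General} and from the second-order regularity of $\Lambda_l$ and $\nabla_{\mu}\Ccal$ granted by \textbf{(H6)}.
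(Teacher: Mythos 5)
Your proposal is correct and follows essentially the same line as the paper. The only difference is one of packaging: where the paper introduces the auxiliary function $\K^N_{\omega_k,\tau_k}(\cdot)$ of \eqref{eq:Kcal_Def} and shows it is constant on $[\tau_k,T]$ (exploiting that the non-absolutely-continuous parts of its distributional derivative cancel because $\textnormal{d}\zeta^*_{l,N}=-\varpi^N_l$), you integrate $\derv{}{t}{}\Xi_k$ from $\tau_k$ to $T$ and then integrate $\int\zeta^*_{l,N}\dot\beta_{l,k}\,\textnormal{d}t$ by parts against $\varpi^N_l$ — these are the same computation, and both reduce to the adjoint cancellation of the $\D_x v,\D_x u^*,\BGamma^v$ terms and the identification of $\int\langle\nabla_\mu\Ccal_l,\F^{\omega_k,\tau_k}_t\rangle\textnormal{d}\mu^*(t)$ with $\derv{}{t}{}\beta_{l,k}(t)$. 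Your derivation of the latter identity via interchanging the $t$- and $e_k$-derivatives of $\Lambda_l(t,\tilde\mu_e(t))$ is a cleaner route than the paper's direct term-by-term differentiation matched against Proposition \ref{prop:WassersteinDiff_StateConstraints}, provided one records that the interchange is justified by the $C^1$-in-time and Lipschitz regularity from \textnormal{\textbf{(H6)}}. One small slip in your last paragraph: the $\omega$-dependent part of $\Hcal_{\lambda_0^N}$ contributes $-\sum_l\zeta^*_{l,N}(\tau_k)\beta_{l,k}(\tau_k)$ (not $+$), but this sign is threaded consistently through your final recombination with $\varpi^N_l(\{\tau_k\})\beta_{l,k}(\tau_k)$ into the coefficient $\varpi^N_l([\tau_k,T])$, so the conclusion is unaffected.
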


\begin{proof}
Using the expression \eqref{eq:Hamiltonian_General} of the augmented infinite-dimensional Hamiltonian along with the definition \eqref{eq:WassersteinDiff} of Wasserstein gradients and the results given in Proposition \ref{prop:Examp_Cost} and Proposition \ref{prop:WassersteinDiff_StateConstraints}, one can check that  
\begin{equation*}
\V^*_N[\nu^*_N(t)](t,x,r) = \J_{2d} \nabla_{\nu} \Hcal_{\lambda_0^N}(t,\nu^*_N(t),\zeta^*_N(t),u^*(t))(x,r), 
\end{equation*}
for $\Lcal^1$-almost every $t \in [0,T]$ and every $(x,r) \in \overline{B_{2d}(0,R'_T)}$.

For $k \in \{1,\dots,N\}$, we introduce the collection of maps $\K^N_{\omega_k,\tau_k}(\cdot)$ defined for $\Lcal^1$-almost all $t \in [\tau_k,T]$ by 
\begin{equation}
\label{eq:Kcal_Def}
\begin{aligned}
\K^N_{\omega_k,\tau_k}(t) & = \INTDom{\big\langle r , \F^{\omega_k,\tau_k}_t \circ \Phi^{v,u^*}_{(t,\tau_k)}(x) \big\rangle}{\R^{2d}}{\nu_N^*(t)(x,r)} + \lambda^N_0 \Big( L(\tau_k,\mu^*(\tau_k),\omega_k) - L(\tau_k,\mu^*(\tau_k),u^*(\tau_k)) \Big) \\
& - \INTSeg{ \INTDom{\left\langle  \lambda_0^N \nabla_{\mu} L(s,\mu^*(s),u^*(s))(x) , \F^{\omega_k,\tau_k}_s \circ \Phi^{v,u^*}_{(s,\tau_k)}(x) \right\rangle}{\R^d}{\mu^*(s)(x)}}{s}{\tau_k}{t} \\
& -\sum_{l=1}^r \INTSeg{\INTDom{\left\langle \nabla_{\mu} \Lambda_l(s,\mu^*(s))(x) , \F^{\omega_k,\tau_k}_s \circ \Phi^{v,u^*}_{(s,\tau_k)}(x) \right\rangle}{\R^d}{\mu^*(s)(x)}}{\varpi_l^N(s)}{\tau_k}{t} \\
& - \sum_{l=1}^r \zeta_{l,N}^*(t) \INTDom{\left\langle \nabla_{\mu} \Lambda_l(t,\mu^*(t))(x) , \F^{\omega_k,\tau_k}_t \circ \Phi^{v,u^*}_{(t,\tau_k)}(x) \right\rangle}{\R^d}{\mu^*(t)(x)}.
\end{aligned}
\end{equation}
$k \in \{1,\dots,N\}$. By construction, the maps $\K^N_{\omega_k,\tau_k}(\cdot)$ satisfy
\begin{equation}
\label{eq:Evaluation_Kfunctions1}
\K^N_{\omega_k,\tau_k}(T) \leq 0, 
\end{equation}
since it can be checked that the evaluation of $\K^N_{\omega_k,\tau_k}(\cdot)$ at $T$ coincides with the left-hand side of \eqref{eq:OptimalityConditions_General}, which has been shown to be non-positive for all $k \in \{1,\dots,N\}$. Moreover, the evaluation of the maps $\K^N_{\omega_k,\tau_k}(\cdot)$ at $\tau_k$ can be written explicitly as 
\begin{equation}
\label{eq:Evaluation_Kfunctions2}
\begin{aligned}
\K^N_{\omega_k,\tau_k}(\tau_k) & = \Hcal_{\lambda_0^N}(\tau_k,\nu^*_N(\tau_k),\zeta^*_N(\tau_k),\omega_k) - \Hcal_{\lambda_0^N}(\tau_k,\nu^*_N(\tau_k),\zeta^*_N(\tau_k),u^*(\tau_k)) \\
& - \sum_{l =1}^r \varpi_l^N(\{\tau_k\}) \INTDom{\langle \nabla_{\mu} \Lambda_l(\tau_k,\mu^*(\tau_k)) , \omega_k(x) - u^*(\tau_k,x) \rangle}{\R^d}{\mu^*(\tau_k)(x)}.
\end{aligned}
\end{equation}

We now aim at showing that the maps $\K^N_{\omega_k,\tau_k}(\cdot)$ are constant over $[\tau_k,T]$. By definition, these functions are in $BV([0,T],\R)$ and therefore admit a distributional derivative in the form of finite Borel regular measures (see e.g. \cite[Chapter 3]{AmbrosioFuscoPallara}). A simple computation of the time derivatives of the last two terms in \eqref{eq:Kcal_Def} shows that the non-absolutely continuous parts of the derivatives of the maps $\K^N_{\omega_k,\tau_k}(\cdot)$ cancel each other out, since the weak derivatives of the maps $\zeta^*_{N,l}(\cdot)$ are such that $\text{d}\zeta^*_{N,l} = -\varpi_l^N$. Hence, the maps $\K^N_{\omega_k,\tau_k}(\cdot)$ are absolutely continuous and therefore differentiable $\Lcal^1$-almost everywhere. One can then compute their derivative at $\Lcal^1$-almost every $t \in [\tau_k,T]$ as follows. 
\begin{equation}
\label{eq:Derivative_Kfunction_General}
\begin{aligned}
\derv{}{t}{} \K^N_{\omega_k,\tau_k}(t) & =  \INTDom{\INTDom{\big\langle r , \partial_t \F^{\omega_k,\tau_k}_t \circ \Phi^{v,u^*}_{(T,\tau_k)}(x) \big\rangle}{\R^d}{\sigma^*_{x,N}(t)(r)}}{\R^d}{\mu^*(T)(x)}  \\
& + \INTDom{\INTDom{\big\langle \V_N^*[\nu_N^*(t)](t,x,r) , \F^{\omega_k,\tau_k}_t \circ \Phi^{v,u^*}_{(T,\tau_k)}(x) \big\rangle}{\R^d}{\sigma^*_{x,N}(t)(r)}}{\R^d}{\mu^*(T)(x)} \\
& - \INTDom{\left\langle \lambda_0^N \nabla_{\mu} L(t,\mu^*(t),u^*(t)) \left( \Phi^{v,u^*}_{(T,t)}(x) \right) , \F^{\omega_k,\tau_k}_t \circ \Phi^{v,u^*}_{(T,\tau_k)}(x) \right\rangle}{\R^d}{\mu^*(T)(x)} \\
& - \sum_{l=1}^r \zeta^*_{l,N}(t) \derv{}{t}{} \left[ \INTDom{\left\langle \nabla_{\mu} \Lambda_l(t,\mu^*(t)) \left( \Phi^{v,u^*}_{(T,t)}(x) \right) , \F^{\omega_k,\tau_k}_t \circ \Phi^{v,u^*}_{(T,\tau_k)}(x) \right\rangle}{\R^d}{\mu^*(T)(x)} \right]
\end{aligned}
\end{equation}
The time-derivatives of the summands of the last term can be computed as follows using Proposition \ref{prop:Chainrule} and the geometric structure \eqref{eq:Flow_def} of solutions of \eqref{eq:Nonlocal_Cauchy_problem} associated with the non-local velocity field $v[\mu^*(t)](t,\cdot) + u^*(t,\cdot)$.
\begin{equation*}
\begin{aligned}
\derv{}{t}{} \Bigg[ & \INTDom{\left\langle \nabla_{\mu} \Lambda_l(t,\mu^*(t)) \left( \Phi^{v,u^*}_{(T,t)}(x) \right) , \F^{\omega_k,\tau_k}_t \circ \Phi^{v,u^*}_{(T,\tau_k)}(x) \right\rangle}{\R^d}{\mu^*(T)(x)} \Bigg] \\
= ~ & \INTDom{\left\langle \nabla_{\mu} \Lambda_l(t,\mu^*(t)) \left( \Phi^{v,u^*}_{(T,t)}(x) \right) , \partial_t \F^{\omega_k,\tau_k}_t \circ \Phi^{v,u^*}_{(T,\tau_k)}(x) \right\rangle}{\R^d}{\mu^*(T)(x)}  \\
+ ~ & \INTDom{\left\langle \partial_t \nabla_{\mu} \Lambda_l(t,\mu^*(t)) \left( \Phi^{v,u^*}_{(T,t)}(x) \right) , \F^{\omega_k,\tau_k}_t \circ \Phi^{v,u^*}_{(T,\tau_k)}(x) \right\rangle}{\R^d}{\mu^*(T)(x)} \\
+ ~ & \INTDom{\left\langle \D_x \nabla_{\mu} \Lambda_l(t,\mu^*(t)) \left( \Phi^{v,u^*}_{(T,t)}(x) \right) v[\mu^*(t)] \left( t , \Phi^{v,u^*}_{(T,t)}(x) \right) , \F^{\omega_k,\tau_k}_t \circ \Phi^{v,u^*}_{(T,\tau_k)}(x) \right\rangle}{\R^d}{\mu^*(T)(x)} \\
+ ~ & \INTDom{\left\langle \D_x \nabla_{\mu} \Lambda_l(t,\mu^*(t)) \left( \Phi^{v,u^*}_{(T,t)}(x) \right) u^* \left( t , \Phi^{v,u^*}_{(T,t)}(x) \right) , \F^{\omega_k,\tau_k}_t \circ \Phi^{v,u^*}_{(T,\tau_k)}(x) \right\rangle}{\R^d}{\mu^*(T)(x)} \\
+ ~ & \INTDom{\left\langle \INTDom{\BGamma^{\nabla_{\mu} \Lambda_l}_{\left( t , \Phi^{v,u^*}_{(T,t)}(x) \right)} \left( \Phi^{v,u^*}_{(T,t)}(y) \right) v[\mu^*(t)] \left( t , \Phi^{v,u^*}_{(T,t)}(y) \right)}{\R^d}{\mu^*(T)(y)} , \F^{\omega_k,\tau_k}_t \circ \Phi^{v,u^*}_{(T,\tau_k)}(x) \right\rangle}{\R^d}{\mu^*(T)(x)} \\
+ ~ & \INTDom{\left\langle \INTDom{\BGamma^{\nabla_{\mu} \Lambda_l}_{\left( t , \Phi^{v,u^*}_{(T,t)}(x) \right)} \left( \Phi^{v,u^*}_{(T,t)}(y) \right)  u^* \left( t , \Phi^{v,u^*}_{(T,t)}(y) \right)}{\R^d}{\mu^*(T)(y)} , \F^{\omega_k,\tau_k}_t \circ \Phi^{v,u^*}_{(T,\tau_k)}(x) \right\rangle}{\R^d}{\mu^*(T)(x)} \\
= ~ & \INTDom{\left\langle \nabla_{\mu} \Ccal_l(t,\mu^*(t),u^*(t)) \left( \Phi^{v,u^*}_{(T,t)}(x) \right) , \F^{\omega_k,\tau_k}_t \circ \Phi^{v,u^*}_{(T,\tau_k)}(x) \right\rangle}{\R^d}{\mu^*(T)(x)}
\end{aligned}
\end{equation*}
by applying Fubini's Theorem and identifying the analytical expressions of the Wasserstein gradients of the summands $\Ccal_l(t,\mu^*(t),u^*(t))$ of $\Ccal(t,\mu^*(t),\zeta^*(t),u^*(t))$ derived in Proposition \ref{prop:WassersteinDiff_StateConstraints}. Plugging this expression into \eqref{eq:Derivative_Kfunction_General} along with the characterization \eqref{eq:FirstOrder_Needle_Expression_General} of $\partial_t \F^{\omega_k,\tau_k}_t(\cdot)$ derived in Lemma \ref{lem:First_order_needle_General}, we obtain that
\begin{equation*}
\begin{aligned}
\derv{}{t}{} \K^N_{\omega_k,\tau_k}(t) & =  \INTDom{\Big\langle \D_x u^*(t,x)^{\top}r + \D_x v[\mu^*(t)](t,x)^{\top}r + \BGamma^v[\nu^*_N(t)](t,x) , \F^{\omega_k,\tau_k}_t \circ \Phi^{v,u^*}_{(t,\tau_k)}(x) \Big\rangle}{\R^{2d}}{\nu_N^*(t)(x,r)}  \\
& + \INTDom{\big\langle \V_N^*[\nu_N^*(t)](t,x,r) , \F^{\omega_k,\tau_k}_t \circ \Phi^{v,u^*}_{(t,\tau_k)}(x) \big\rangle}{\R^{2d}}{\nu_N^*(t)(x)} \\
& - \INTDom{\left\langle \lambda_0^N \nabla_{\mu} L(t,\mu^*(t),u^*(t))(x) , \F^{\omega_k,\tau_k}_t \circ \Phi^{v,u^*}_{(t,\tau_k)}(x) \right\rangle}{\R^d}{\mu^*(t)(x)} \\
& - \sum_{l=1}^r \zeta^*_{l,N}(t)\INTDom{\left\langle \nabla_{\mu} \Ccal_l(t,\mu^*(t),u^*(t))(x) , \F^{\omega_k,\tau_k}_t \circ \Phi^{v,u^*}_{(T,t)}(x) \right\rangle}{\R^d}{\mu^*(t)(x)}. \\
\end{aligned}
\end{equation*}
In the first line we used the fact that
\begin{equation*}
\begin{aligned}
& \INTDom{\left\langle r , \INTDom{\BGamma^v_{(t,x)}(y) \F^{\omega_k,\tau_k}_t \circ \Phi^{v,u^*}_{(t,\tau_k)}(y)}{\R^d}{\mu^*(t)(y)} \right\rangle}{\R^{2d}}{\nu^*_N(t)(x,r)} \\
= ~ & \INTDom{\INTDom{\left\langle \BGamma^v_{(t,y)}(x)^{\top} \, p , \F^{\omega_k,\tau_k}_t \circ \Phi^{v,u^*}_{(t,\tau_k)}(x) \right\rangle}{\R^{2d}}{\nu^*_N(t)(x,r)}}{\R^d}{\mu^*(t)(y)} \\
= ~ & \INTDom{ \left\langle \BGamma^v[\nu^*_N(t)](t,x) , \F^{\omega_k,\tau_k}_t \circ \Phi^{v,u^*}_{(t,\tau_k)}(x) \right\rangle }{\R^d}{\mu^*(t)(x)}. 
\end{aligned}
\end{equation*}
as a consequence of  Fubini's Theorem, where $\BGamma^v[\nu^*_N(\cdot)](\cdot,\cdot)$ is defined as in \eqref{eq:BGammaBis_def}. Recalling the definition of the vector field $\V_N^*[\cdot](\cdot,\cdot,\cdot)$ given in \eqref{eq:HamiltonianFlow_General}, we therefore observe that $\derv{}{t}{} \K^N_{\omega_k,\tau_k}(t) = 0$ for $\Lcal^1$-almost every $t \in [\tau_k,T]$, so that it is constant over this time interval. Merging this fact with \eqref{eq:Evaluation_Kfunctions1} and \eqref{eq:Evaluation_Kfunctions2} yields \eqref{eq:Partial_Maximization_General} and concludes the proof of our claim.
\end{proof}


\bigskip \newpage

\begin{flushleft}
\underline{\textbf{Step 4 : Limiting procedure}}
\end{flushleft}

\smallskip

\subsubsection*{\textbf{The PMP for absolutely continuous state constraints multipliers}}

In Step 3, we have built for any $N \geq 1$ a suitable state-costate curve $\nu^*_N(\cdot)$ solution of the Hamiltonian system \eqref{eq:PMP_HamiltonianFlow_General}, and such that the relaxed Pontryagin maximization condition \eqref{eq:Partial_Maximization_General} holds on an $N$-dimensional subset of needle parameters. The last step in the proof of Theorem \ref{thm:PMP_General} is to take the limit as $N$ goes to infinity of the previous optimality conditions in order to recover the PMP formulated on the whole set of needle parameters. 

By the non-triviality condition (NT), the sequence of Lagrange multipliers $\big( (\lambda_0^N,\dots,\lambda_n^N,\eta_1^N,\dots,\eta_m^N) \big) \subset \{0,1\} \times \R_+^n \times \R^m$ is bounded uniformly with respect to $N$. Hence, we can extract a subsequence of multipliers such that $(\lambda_0^N,\dots,\lambda_n^N,\eta_1^N,\dots,\eta_m^N) \longrightarrow (\lambda_0,\dots,\lambda_n,\eta_1,\dots,\eta_m)$ as $N \rightarrow + \infty$. A straightforward passage to the limit shows that these limit multipliers satisfy the complementary slackness condition
\begin{equation*}
\lambda_i \Psi^I_i(\mu^*(T)) = 0 \qquad \text{for all $i \in \{1,\dots,n\}$}.
\end{equation*}
Similarly, the sequence of measures $(\varpi_1^N,\dots,\varpi_r^N) \subset \M_+([0,T])^r$ is uniformly bounded with respect to the total variation norm as a consequence of Lemma \ref{lem:Derivative_RCSC}. By Banach-Alaoglu's Theorem (see e.g. \cite[Theorem 3.16]{Brezis}), it therefore admits a weakly-$^*$ converging subsequence to some $(\varpi_1,\dots,\varpi_r) \in \M_+([0,T])^r$. Moreover for any $l \in \{1,\dots,r\}$, the measures $(\varpi_l^N)$ are equi-supported in the sets $\{ t \in [0,T] ~\text{s.t.}~ \Lambda_l(t,\mu^*(t)) =0 \}$. It can then be shown by standard convergence properties on the supports of sequences of measures (see e.g. \cite[Proposition 5.1.8]{AGS}) that the limit measures $(\varpi_1,\dots,\varpi_r)$ are supported on these sets as well. Therefore, it holds that
\begin{equation*}
\supp(\varpi_l) \subseteq \Big\{ t \in [0,T] ~\text{s.t.}~ \Lambda_l(t,\mu^*(t)) = 0 \Big\},
\end{equation*}
for any $l \in \{1,\dots,r\}$. Furthermore, if all the scalar Lagrange multipliers $(\lambda_0^N,\dots,\lambda_n^N,\eta_1^N,\dots,\eta_m^N)$ vanish for large $N$, it follows from the non-triviality condition (NT) that $\Norm{\varpi_l^N}_{TV} > 0$ so that $\varpi_l \neq 0$ in the limit, at least for some $l \in \{1,\dots,r\}$. Hence, we recover the non-degeneracy condition \eqref{eq:NonDegeneracy_General}, i.e.
\begin{equation*}
(\lambda_0,\dots,\lambda_n,\eta_1,\dots,\eta_m,\varpi_1,\dots,\varpi_r) \neq 0.
\end{equation*}
In Lemma \ref{lem:Wellposedness_General}, we have shown that the curves of measures $\nu^*_N(\cdot)$ are uniformly equi-compactly supported and equi-Lipschitz. Hence, $(\nu^*_N(\cdot))$ admits converging subsequences in the $C^0$-topology by Ascoli-Arzel\`a Theorem (see e.g. \cite[Theorem 11.28]{Rudin1987}). 

We now prove that there exists an accumulation point $\nu^*(\cdot)$ of $(\nu^*_N(\cdot))$ which solves the system of equations \eqref{eq:PMP_HamiltonianFlow_General} associated with the limit multipliers $(\lambda_0,\dots,\lambda_n,\eta_1,\dots,\eta_m,\varpi_1,\dots,\varpi_r)$. To this end, we start by making an extra simplifying assumption on the state constraints multipliers. 

\begin{framed}
\begin{center}
\vspace{-0.15cm}
\textnormal{\textbf{(H7)} :} The measures $(\varpi_1,\dots,\varpi_r)$ are absolutely continuous with respect to $\Lcal^1$ on $[0,T]$.
\end{center}
\vspace{-0.15cm}
\end{framed}
We shall see in the sequel how this extra assumption can be lifted at the price of an extra approximation argument by absolutely continuous measures. Let $\nu^*(\cdot) \in \Lip([0,T],\Pcal(\overline{B_{2d}(0,R'_T)})$ be an accumulation point of $(\nu^*_N(\cdot))$ along a suitable subsequence. As a direct consequence of the convergence of the scalar Lagrange multipliers, one recovers the uniform convergence of the final gradient map
\begin{equation*}
\nabla_{\mu} \Scal_N(\mu^*(T))(\cdot) ~\underset{N \rightarrow +\infty}{\overset{C^0}{\longrightarrow}}~ \nabla_{\mu} \Scal(\mu^*(T))(\cdot).
\end{equation*}
This implies by standard convergence results for pushforwards of measures (see e.g. \cite[Lemma 5.2.1]{AGS}) that $\nu^*(\cdot)$ satisfies the boundary condition
\begin{equation*}
\pi^2_{\#}\nu^*(T) = (-\nabla_{\mu} \Scal(\mu^*(T)))_{\#} \mu^*(T). 
\end{equation*}

Moreover, the weak-$^*$ convergence of $(\varpi_1^N,\dots,\varpi_r^N)$ towards $(\varpi_1,\dots,\varpi_r)$ along with \textnormal{\textbf{(H7)}} implies by Proposition \ref{prop:Portmanteau} that 
\begin{equation*}
\zeta^*_{l,N}(t) = \mathds{1}_{[0,T)}(t) \INTSeg{}{\varpi_l^N(s)}{t}{T} ~\underset{N \rightarrow +\infty}{\longrightarrow}~ \mathds{1}_{[0,T)}(t) \INTSeg{}{\varpi_l(s)}{t}{T} = \zeta^*_l(t)
\end{equation*}
for all times $t \in [0,T]$. By definition \eqref{eq:NonlocalPDE_distributions1} of distributional solutions to transport equations, the fact that $\nu^*_N(\cdot)$ is a solution of \eqref{eq:HamiltonianFlow_General} can be written as
\begin{equation}
\label{eq:Distribution_General}
\INTSeg{\INTDom{\Big( \partial_t \xi(t,x,r) + \left\langle \nabla_{(x,r)} \xi(t,x,r) , \J_{2d} \nabla_{\nu}\Hcal_{\lambda_0^N}(t,\nu^*_N(t),\zeta^*_N(t),u^*(t))(x,r) \right\rangle \Big)}{\R^{2d}}{\nu^*_N(t)(x,r)}}{t}{0}{T} = 0
\end{equation}
for any $\xi \in C^{\infty}_c([0,T] \times \R^{2d})$. Since all the functionals involved in the definition of the Wasserstein gradient of the augmented infinite-dimensional Hamiltonian are continuous and bounded, we have that 
\begin{equation*}
\nabla_{\nu}\Hcal_{\lambda_0^N}(t,\nu^*_N(t),\zeta^*_N(t),u^*(t))(\cdot,\cdot) ~\underset{N \rightarrow +\infty}{\overset{C^0}{\longrightarrow}}~ \nabla_{\nu}\Hcal_{\lambda_0}(t,\nu^*(t),\zeta^*(t),u^*(t))(\cdot,\cdot)
\end{equation*}
uniformly with respect to $t \in [0,T]$, as a by-product of the convergence of the Lagrange multipliers. By using this fact along with the uniform equi-compactness of the supports of $(\nu^*_N(\cdot))$, we can take the limit as $N \rightarrow +\infty$ in \eqref{eq:Distribution_General} an apply Lebesgue's Dominated Convergence Theorem to recover that
\begin{equation*}
\INTSeg{\INTDom{\Big( \partial_t \xi(t,x,r) + \left\langle \nabla_{(x,r)} \xi(t,x,r) , \nabla_{\nu}\Hcal_{\lambda_0}(t,\nu^*(t),\zeta^*(t),u^*(t))(x,r) \right\rangle \Big)}{\R^{2d}}{\nu^*(t)(x,r)}}{t}{0}{T} = 0
\end{equation*}
for any $\xi \in C^{\infty}_c([0,T] \times \R^{2d})$. Hence, the accumulation point $\nu^*(\cdot)$ of $(\nu^*_N(\cdot))$ in the $C^0$-topology is a solution of the Hamiltonian flow \eqref{eq:HamiltonianFlow_General} associated with the limit multipliers $(\lambda_0,\dots,\lambda_n,\eta_1,\dots,\eta_m,\varpi_1,\dots,\varpi_r)$. 

In order to complete our proof of Theorem \ref{thm:PMP_General}, there remains to show that the limit curve $\nu^*(\cdot)$ is such that the maximization condition \eqref{eq:Maximization_General} holds for $\Lcal^1$-almost every $t \in [0,T]$. We know that for any $(\omega_k,\tau_k) \in U_D \times \Acal$, the modified maximization condition \eqref{eq:Partial_Maximization_General} holds. By the preliminary assumption \textbf{(H7)} that the limit measures $(\varpi_1,\dots,\varpi_r)$ are absolutely continuous with respect to $\Lcal^1$, we can apply Proposition \ref{prop:Portmanteau} to recover that 
\begin{equation*}
\varpi_l^N(\{ \tau_k \}) ~\underset{N \rightarrow +\infty}{\longrightarrow}~ \varpi_l(\{ \tau_k \}) = 0,
\end{equation*}
for any $l \in \{ 1,\dots,r \}$. Since the infinite-dimensional Hamiltonian is continuous with respect to its second argument in the $W_1$-metric and linear with respect to its third argument, it holds that 
\begin{equation*}
\Hcal_{\lambda_0^N}(\tau_k,\nu^*_N(\tau_k),\zeta^*_N(\tau_k),\omega_k) ~\underset{N \rightarrow +\infty}{\longrightarrow}~ \Hcal_{\lambda_0}(\tau_k,\nu^*(\tau_k),\zeta^*(\tau_k),\omega_k)
\end{equation*}
and 
\begin{equation*}
\Hcal_{\lambda_0^N}(\tau_k,\nu^*_N(\tau_k),\zeta^*_N(\tau_k),u^*(\tau_k)) ~\underset{N \rightarrow +\infty}{\longrightarrow}~ \Hcal_{\lambda_0}(\tau_k,\nu^*(\tau_k),\zeta^*(\tau_k),u^*(\tau_k))
\end{equation*}
uniformly with respect to $k \leq N$.  We can therefore pass to the limit as $N \rightarrow +\infty$ in the partial maximization condition \eqref{eq:Partial_Maximization_General} to obtain that
\begin{equation}
\label{eq:Total_Maximization_Proof1}
\Hcal_{\lambda_0}(\tau_k,\nu^*(\tau_k),\zeta^*(\tau_k),\omega_k) \leq \Hcal_{\lambda_0}(\tau_k,\nu^*(\tau_k),\zeta^*(\tau_k),u^*(\tau_k)) 
\end{equation}
for any $(\omega_k,\tau_k) \in U_D \times \Acal$. 

Given an arbitrary pair $(\omega,\tau) \in U \times \Mcal$, it is possible to choose a sequence of elements $\{(\omega_k,\tau_k)\}_k \subset U_D \times \Acal$ such that 
\begin{equation*}
\tau_k \underset{k \rightarrow +\infty}{\longrightarrow} \tau, \qquad \omega_k \overset{C^0}{\underset{k \rightarrow +\infty}{\longrightarrow}} \omega,
\end{equation*}
and 
\begin{equation}
\label{eq:Recall_Lusin_General}
\NormC{u^*(\tau,\cdot) - u^*(\tau_k,\cdot)}{0}{\overline{B(0,R_T)},\R^d} ~\underset{k \rightarrow +\infty}{\longrightarrow}~ 0, \qquad \NormC{L(\tau,\mu^*(\tau),\cdot)-L(\tau_k,\mu^*(\tau_k),\cdot)}{0}{U,\R} ~\underset{k \rightarrow +\infty}{\longrightarrow}~ 0.
\end{equation}
Remark first that under assumption \textnormal{\textbf{(H7)}}, the maps $t \rightarrow \zeta^*_l(t)$ are continuous on $[0,T)$. By \eqref{eq:Recall_Lusin_General} along with the continuity of the augmented infinite-dimensional Hamiltonian in the $C^0$-norm topology with respect to its fourth argument, we can pass to the limit as $k \rightarrow +\infty$ in \eqref{eq:Total_Maximization_Proof1}. This yields the Pontryagin Maximization condition 
\begin{equation*}
\Hcal_{\lambda_0}(\tau,\nu^*(\tau),\zeta^*(\tau),\omega) \leq \Hcal_{\lambda_0}(\tau,\nu^*(\tau),\zeta^*(\tau),u^*(\tau)) 
\end{equation*}
for any pair $(\omega,\tau) \in U \times \Mcal$. 

\smallskip

\subsubsection*{\textbf{Lifting the absolute continuity hypothesis \textnormal{\textbf{(H7)}}}}

\smallskip

In order to recover the full statement of Theorem \ref{thm:PMP_General}, we now show how to relax the absolute continuity assumption \textbf{(H7)} made on the state constraints multipliers. For a given small parameter $\epsilon > 0$, we consider a sequence of mollifiers $\rho_{\epsilon} : t \in [0,T] \mapsto \epsilon^{-1} \rho(t/\epsilon)$ where $\rho \in C^{\infty}_c([0,T])$ is such that $\INTSeg{\rho(t)}{t}{0}{T} = 1$. Given $N \geq 1$, we define the mollified measure $(\varpi_1^{N,\epsilon},\dots,\varpi_r^{N,\epsilon})$ by
\begin{equation*}
\varpi_l^{N,\epsilon} = (\rho_{\epsilon} * \varpi^N_l)(t) \cdot \Lcal^1  
\end{equation*}
where for any $l \in \{1,\dots,r\}$, the convolution maps are defined by $\rho_{\epsilon} * \varpi^N_l : t \in [0,T] \mapsto \INTSeg{\rho_{\epsilon}(t-s)}{\varpi^N_l(s)}{0}{T}$. Using the fact that the functions 
\begin{equation*}
t \in [\tau_k,T] \mapsto \INTDom{\langle \nabla_{\mu} \Lambda_l(t,\mu^*(t))(x) , \F^{\omega_k,\tau_k}_t \circ \Phi^{v,u^*}_{(t,\tau_k)}(x)  \rangle}{\R^d}{\mu^*(t)(x)}
\end{equation*}
are Lipschitz and bounded as a by-product of \textbf{(H6)} and Lemma \ref{lem:First_order_needle_General}, one can assert using the definition of the dual norm in the Banach space $\M_+([0,T])$ that 
\begin{equation*}
\begin{aligned}
& - \INTSeg{\INTDom{\langle \nabla_{\mu} \Lambda_l(t,\mu^*(t))(x) , \F^{\omega_k,\tau_k}_t \circ \Phi^{v,u^*}_{(t,\tau_k)}(x)  \rangle}{\R^d}{\mu^*(t)(x)}}{\varpi_l^N(t)}{\tau_k}{T} \\
\geq & - \INTSeg{\INTDom{\langle \nabla_{\mu} \Lambda_l(t,\mu^*(t))(x) , \F^{\omega_k,\tau_k}_t \circ \Phi^{v,u^*}_{(t,\tau_k)}(x)  \rangle}{\R^d}{\mu^*(t)(x)}}{\varpi_l^{N,\epsilon}(t)}{\tau_k}{T} - C\epsilon
\end{aligned}
\end{equation*}
for some uniform constant $C > 0$. This allows us to rewrite the optimality conditions \eqref{eq:OptimalityConditions_General} derived at time $T$ as 
\begin{equation}
\begin{aligned}
& \INTDom{\langle -\nabla_{\mu} \Scal_N(\mu^*(T))(x) , \F^{\omega_k,\tau_k}_T \circ \Phi^{v,u^*}_{(T,\tau_k)}(x) \rangle}{\R^d}{\mu^*(T)(x)} - \lambda^0_N \Big( L(\tau_k,\mu^*(\tau_k),u^*(\tau_k)) - L(\tau_k,\mu^*(\tau_k),\omega_k) \Big) \\
- & \INTSeg{\INTDom{ \left\langle \lambda^0_N \nabla_{\mu} L(t,\mu^*(t),u^*(t))(x) , \F^{\omega_k,\tau_k}_t \circ \Phi^{v,u^*}_{(t,\tau_k)}(x) \right\rangle}{\R^d}{\mu^*(t)(x)}}{t}{\tau_k}{T} \\
- & \sum_{l=1}^r \INTSeg{\INTDom{\left\langle \nabla_{\mu} \Lambda_l(t,\mu^*(t))(x) , \F^{\omega_k,\tau_k}_t \circ \Phi^{v,u^*}_{(t,\tau_k)}(x) \right\rangle}{\R^d}{\mu^*(t)(x)}}{\varpi_l^{N,\epsilon}(t)}{\tau_k}{T} \leq C \epsilon
\end{aligned}
\end{equation}
By defining the family of measure curves $(\nu^*_{N,\epsilon}(\cdot))$ as in Lemma \ref{lem:Wellposedness_General}, we can prove that the corresponding maps $\K^{N,\epsilon}_{\omega_k,\tau_k}(\cdot)$ defined as in \eqref{eq:Derivative_Kfunction_General} are constant over $[\tau_k,T]$ and that the partial maximization conditions 
\begin{equation*}
\Hcal_{\lambda_0^N}(\tau_k,\nu^*_{N,\epsilon}(\tau_k),\zeta^*_{N,\epsilon}(\tau_k),\omega_k) \leq \Hcal_{\lambda_0^N}(\tau_k,\nu^*_{N,\epsilon}(\tau_k),\zeta^*_{N,\epsilon}(\tau_k),u^*(\tau_k)) + C \epsilon
\end{equation*}
hold for any $\epsilon > 0$. Performing again the limiting arguments of Step 4 as $N \rightarrow +\infty$ and remarking that
\begin{equation*}
\varpi_l^{N,\epsilon} ~\underset{N \rightarrow +\infty}{\rightharpoonup^*}~ \varpi_l^{\epsilon} = (\rho_{\epsilon} * \varpi_l) \cdot \Lcal^1,
\end{equation*}
we recover the statement of the PMP with a measure curve $\nu^*_{\epsilon}(\cdot)$ associated to the Lagrange multipliers $(\lambda_0,\dots,\lambda_n,\eta_1,\dots,\eta_m,\varpi_1^{\epsilon},\dots,\varpi_r^{\epsilon})$. This limit curve is such that the relaxed maximization condition
\begin{equation}
\label{eq:Relaxed_Maximization}
\Hcal_{\lambda_0}(\tau,\nu^*_{\epsilon}(\tau),\zeta^*_{\epsilon}(\tau),\omega) \leq \Hcal_{\lambda_0}(\tau_k,\nu^*_{\epsilon}(\tau),\zeta^*_{\epsilon}(\tau),u^*(\tau)) + C \epsilon
\end{equation}
holds for any $(\omega,\tau) \in U \times \Mcal$. There now remains to perform a last limiting argument as $\epsilon \downarrow 0$ to recover the full maximum principle. 

By Lebesgue's Decomposition Theorem for finite Borel measures on the real line (see e.g. \cite[Remark 3.32, Corollary 3.33]{AmbrosioFuscoPallara}), the sets $\{ t \in [0,T] ~\text{s.t.}~ \varpi_l(\{t\}) > 0 \}$ are at most countable for all $l \in \{1,\dots,r\}$. Therefore, the set $\Mcal^{\circ} = \Mcal \backslash \bigcup_{l=1}^r \{ t \in [0,T] ~\text{s.t.}~ \varpi_l(\{t\}) > 0 \}$ has full Lebesgue measure in $[0,T]$, and by Proposition \ref{prop:Portmanteau} it is such that
\begin{equation*}
\zeta^*_{l,\epsilon}(\tau) = \INTSeg{}{\varpi_l^{\epsilon}(s)}{\tau}{T} ~\underset{\epsilon \downarrow 0}{\longrightarrow}~ \INTSeg{}{\varpi_l(s)}{\tau}{T} = \zeta^*_l(\tau). 
\end{equation*}
for any $\tau \in \Mcal^{\circ}$. By Ascoli-Arzel\`a Theorem, it holds that $\nu^*_{\epsilon}(\cdot) \rightarrow \nu^*(\cdot)$ in the $C^0$-norm topology and it can be checked that this limit curve solves the forward-backward system of continuity equations \eqref{eq:PMP_HamiltonianFlow_General} associated with the multipliers $(\lambda_0,\dots,\lambda_n,\eta_1,\dots,\eta_m,\varpi_1,\dots,\varpi_r)$. Moreover, letting $\epsilon \downarrow 0$ in \eqref{eq:Relaxed_Maximization} implies that the Pontryagin maximization condition \eqref{eq:Maximization_General} holds on the restricted subset $U \times \Mcal^{\circ}$.


\appendix


\section{Examples of functionals satisfying hypotheses \textnormal{\textbf{(H)}}}
\label{appendix:Examples}

In this Appendix, we show that the rather long list of hypotheses \textnormal{\textbf{(H)}} is not too restrictive and that a good score of relevant functionals for applications fit into the framework of Theorem \ref{thm:PMP_General}. This list of examples is partly borrowed from our previous work \cite{PMPWass}.

\begin{prop}[Example of non-local velocity field]
Let $(t,x,y) \mapsto H(t,x,y) \in \R^d$ be measurable with respect to $t \in [0,T]$, sublinear, Lipschitz and $C^1$-with respect to $(x,y) \in \R^{2d}$. Then, the map $\mu \in \Pcal_c(\R^d) \mapsto v[\mu](\cdot,\cdot)$ defined by 
\begin{equation*}
v[\mu](t,x) = \INTDom{H(t,x,y)}{\R^d}{\mu(y)}
\end{equation*}
for $\Lcal^1$-almost every $t \in [0,T]$ and any $x \in \R^d$ satisfies the hypotheses \textnormal{\textbf{(H2)}}-\textnormal{\textbf{(H3)}} of Theorem \ref{thm:PMP_General}. Moreover, its first-order variations $\D_x v[\mu](t,x)$ and $\INTDom{\BGamma^v_{(t,x)}(y)}{\R^d}{\mu(y)}$ are given by 
\begin{equation*}
\D_x v[\mu](t,x) = \INTDom{\D_x H(t,x,y)}{\R^d}{\mu(y)} ~~,~~
\INTDom{\BGamma^v_{(t,x)}(y)}{\R^d}{\mu(y)}  = \INTDom{\D_y H(t,x,y)}{\R^d}{\mu(y)}.
\end{equation*} 
\end{prop}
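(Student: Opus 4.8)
The statement asserts three things about the convolution-type velocity field $v[\mu](t,x) = \INTDom{H(t,x,y)}{\R^d}{\mu(y)}$: that it satisfies the Cauchy--Lipschitz hypotheses \textbf{(H2)}, that it satisfies the regularity/differentiability hypotheses \textbf{(H3)}, and that its spatial and measure derivatives have the claimed integral representations. I would treat these in that order, deriving everything directly from the hypotheses on $H$ (measurability in $t$, sublinearity, Lipschitz continuity, and $C^1$ regularity in $(x,y)$, with all bounds uniform in $t$).

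\emph{Verifying \textnormal{\textbf{(H2)}}.} The sublinear growth $|H(t,x,y)| \leq C(1 + |x| + |y|)$ together with the fact that $\mu \in \Pcal_c(\R^d)$ immediately gives $|v[\mu](t,x)| \leq M(1 + |x|)$, where the constant $M$ depends on the support radius of $\mu$; since in the statement of Theorem \ref{thm:PMP_General} one works with measures supported in a fixed ball, this is uniform in the relevant class. The spatial Lipschitz estimate $|v[\mu](t,x) - v[\mu](t,y)| \leq L_1|x-y|$ follows by integrating the Lipschitz bound on $x \mapsto H(t,x,\cdot)$ against the probability measure $\mu$. For the Wasserstein-Lipschitz bound $\NormC{v[\mu](t,\cdot) - v[\nu](t,\cdot)}{0}{\R^d} \leq L_2 W_1(\mu,\nu)$, I would fix $x$ and observe that $y \mapsto H(t,x,y)$ is Lipschitz uniformly in $(t,x)$, so Kantorovich--Rubinstein duality \eqref{eq:Kantorovich_duality} applied componentwise yields $|v[\mu](t,x) - v[\nu](t,x)| = \big| \INTDom{H(t,x,y)}{\R^d}{(\mu-\nu)(y)} \big| \leq \Lip(H(t,x,\cdot)) \, W_1(\mu,\nu)$, and taking the supremum over $x$ closes the estimate.

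\emph{Verifying \textnormal{\textbf{(H3)}} and computing the derivatives.} This is the substantive part. Each component $v^i[\mu](t,x) = \INTDom{H^i(t,x,y)}{\R^d}{\mu(y)}$ is linear in $\mu$, so its Wasserstein differentiability at $\mu^*(t)$ should reduce to the standard fact that linear functionals of the form $\mu \mapsto \INTDom{f(y)}{\R^d}{\mu(y)}$ with $f \in C^1$ are Wasserstein-differentiable with gradient $\nabla f$ (this is in the list of Appendix \ref{appendix:Examples} / \cite{PMPWass}, Definition \ref{def:WassersteinDiff}): here $f(y) = H^i(t,x,y)$, so $\nabla_\mu(v^i[\cdot](t,x))(\mu)(y) = \nabla_y H^i(t,x,y)$, which upon stacking rows gives $\BGamma^v_{(t,x)}(y) = \D_y H(t,x,y)$ and hence the claimed formula for $\INTDom{\BGamma^v_{(t,x)}(y)}{\R^d}{\mu(y)}$. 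Regularity in the sense of Definition \ref{def:Regular} follows because these functionals are smooth and convexity/approximation issues are trivial for linear functionals; I would cite the corresponding entry of the examples list rather than redo it. The spatial derivative $\D_x v[\mu](t,x) = \INTDom{\D_x H(t,x,y)}{\R^d}{\mu(y)}$ follows by differentiating under the integral sign, justified by the uniform Lipschitz/$C^1$ bounds on $H$ and dominated convergence on the compact support of $\mu$. The continuity requirements in \textbf{(H3)} --- continuity of $\mu \mapsto \D_x v[\mu](t,x)$ and of $(y,z) \mapsto \nabla_\mu(v^i[\cdot](t,y))(\mu)(z)$, uniformly in $(t,x)$ --- follow from the continuity of $\D_x H$ and $\D_y H$ in their arguments together with the $W_1$-to-narrow convergence of measures (Proposition \ref{prop:Properties_Wp}) and the uniform bounds, again using that everything lives on a fixed compact set.

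\emph{Main obstacle.} The genuinely delicate point is checking the regularity of $\mu \mapsto v^i[\mu](t,x)$ in the sense of Definition \ref{def:Regular} (stability of strong extended subdifferentials under $W_2$-convergence), and doing so uniformly with respect to $(t,x)$; for a general functional this is nontrivial, but for a linear functional the extended subdifferential is the single plan $(\Id \times \nabla_y H^i(t,x,\cdot))_{\#}\mu$ by Remark \ref{rmk:WassersteinDiff}, and its stability is an easy consequence of continuity of $\nabla_y H^i$. The rest is a matter of organizing the differentiation-under-the-integral arguments and invoking Kantorovich duality; none of it requires new ideas beyond those already deployed in Section \ref{section:Recall}.
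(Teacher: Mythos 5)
Your proposal is correct, and since the paper states this proposition in Appendix~\ref{appendix:Examples} without supplying a proof (deferring to \cite{PMPWass}), there is no in-text argument to compare against; what you wrote is the natural and complete verification. Your decomposition of the work---sublinearity and the two Lipschitz estimates for \textbf{(H2)}, with the $W_1$-Lipschitz bound via Kantorovich--Rubinstein duality \eqref{eq:Kantorovich_duality}; then reduction of \textbf{(H3)} and the formula for $\BGamma^v_{(t,x)}$ to the standard Wasserstein differentiation of linear functionals $\mu \mapsto \int f\,\mathrm{d}\mu$, $f \in C^1$, together with differentiation under the integral for $\D_x v[\mu]$---is exactly what the statement calls for. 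You also correctly flagged the one genuine subtlety: if $H$ is sublinear jointly in $(x,y)$ then the constant $M$ in $|v[\mu](t,x)| \leq M(1+|x|)$ necessarily depends on a bound for $\int |y|\,\mathrm{d}\mu(y)$, so the estimate is only uniform over measures supported in a fixed compact set. This is the setting in which the proposition is actually used (trajectories stay in $\overline{B(0,R_T)}$ by Theorem~\ref{thm:Nonlocal_PDE}), so it is harmless, but a fully pedantic statement would either restrict to $\Pcal(\overline{B(0,R)})$ or assume $H$ sublinear in $x$ only.
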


\begin{prop}[Example of cost and constraint functions]
\label{prop:Examp_Cost} 
Let $n \geq 1$ and $W \in C^1(\R^{nd},\R)$. Then, the functional 
\begin{equation*}
\varphi : \mu \in \Pcal_c(\R^d) \mapsto \INTDom{W(x_1,\dots,x_n)}{\R^{nd}}{\mu^{\otimes n}(x_1,\dots,x_n)}
\end{equation*}
with $\mu^{\otimes n} = \mu \times \dots \times \mu$ satisfies \textnormal{\textbf{(H4)}} of Theorem \ref{thm:PMP_General} and its Wasserstein gradient at some $\mu \in \Pcal(K)$ is given by 
\begin{equation*}
\nabla_{\mu} \varphi(\mu)(x_1, \dots ,x_n) = \sum_{j=1}^n \nabla_{x_j}W(x_1,\dots,x_n).
\end{equation*}

Let $m \in C^1(\R^d,\R^n)$ and $(t,x,v,r) \mapsto l(t,x,v,r) \in \R$ be $\Lcal^1$-measurable with respect to $t \in [0,T]$ and $C^1$-smooth with respect to $(x,v,r) \in \R^d \times \R^d \times \R^n$. Then, the functional 
\begin{equation*}
L : (t,\mu,\omega) \in [0,T] \times \Pcal(K) \times U \mapsto \INTDom{l \Big(t,x,\omega(x), \mathsmaller{\int} m \textnormal{d}\mu \Big)}{\R^d}{\mu(x)},
\end{equation*}
satisfies the hypotheses \textnormal{\textbf{(H5)}} of Theorem \ref{thm:PMP_General} and its Wasserstein gradient is given by
\begin{equation*}
\begin{aligned}
\nabla_{\mu} L(t,\mu,\omega)(x) = \nabla_x l \Big(t,x,\omega(x), \mathsmaller{\int} m \textnormal{d}\mu \Big) & + \D_x \omega(x)^{\top} \nabla_v l \Big(t,x,\omega(x), \mathsmaller{\int} m \textnormal{d}\mu \Big) \\
& + \D_x m(x)^{\top} \INTDom{\nabla_r l \Big(t,y,\omega(y), \mathsmaller{\int} m \textnormal{d}\mu \Big)}{\R^d}{\mu(y)}.
\end{aligned}
\end{equation*}
\end{prop}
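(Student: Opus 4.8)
The plan is to verify, separately for $\varphi$ and for $L$, the three properties bundled in hypothesis \textbf{(H4)} (resp.\ \textbf{(H5)}): boundedness together with Lipschitz continuity in the $W_2$-metric on every $\Pcal(K)$; regularity in the sense of Definition \ref{def:Regular}; and Wasserstein-differentiability, with the gradient given by the announced formula and depending continuously on the base point. The measurability assertion in \textbf{(H5)} is immediate, since $t \mapsto L(t,\mu,\omega)$ is obtained from the $\Lcal^1$-measurable map $t \mapsto l(t,\cdot,\cdot,\cdot)$ by composition with the fixed Borel maps $x \mapsto \omega(x)$, $\mu \mapsto \int m\,\mathrm{d}\mu$ and integration against $\mu$. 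Boundedness on $\Pcal(K)$ follows from the compactness of $K$: $W$ (resp.\ $l$, restricted to the compact set $K \times \overline{B} \times m(K)$, where $\overline{B}$ is a fixed ball containing $\omega(K)$ for all $\omega \in U$, which exists by \textbf{(H1)}) is continuous hence bounded there. For Lipschitz continuity of $\varphi$ one uses the telescoping identity $\mu^{\otimes n} - \nu^{\otimes n} = \sum_{k=0}^{n-1} \mu^{\otimes (n-1-k)} \otimes (\mu-\nu) \otimes \nu^{\otimes k}$ together with the Kantorovich--Rubinstein duality \eqref{eq:Kantorovich_duality} applied to the partial integrals of $W$ --- which are Lipschitz on $K^n$ --- to get $|\varphi(\mu)-\varphi(\nu)| \leq C\,W_1(\mu,\nu) \leq C\,W_2(\mu,\nu)$; for $L$ one splits the increment into the part coming from $\mu$ (handled as above, also invoking the Lipschitz dependence of $l$ in its last argument and $|\int m\,\mathrm{d}\mu - \int m\,\mathrm{d}\nu| \leq \Lip(m;K)\,W_1(\mu,\nu)$) and the part coming from $\omega$ (controlled by the Lipschitz dependence of $l$ in its velocity argument, since $\sup_{x \in K}|\omega_1(x)-\omega_2(x)| = \NormC{\omega_1-\omega_2}{0}{\R^d}$).

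For Wasserstein-differentiability of $\varphi$ at $\mu \in \Pcal(K)$, fix $\nu \in \Pcal_2(\R^d)$ and $\gamma \in \Gamma_o(\mu,\nu)$. After reorganizing coordinates, the product plan $\gamma^{\otimes n}$ is a transport plan between $\mu^{\otimes n}$ and $\nu^{\otimes n}$, so that
$$
\varphi(\nu) - \varphi(\mu) = \INTDom{\big( W(y_1,\dots,y_n) - W(x_1,\dots,x_n) \big)}{\R^{2nd}}{\gamma^{\otimes n}}.
$$
A first-order Taylor expansion of $W$ at $(x_1,\dots,x_n)$ produces the linear term $\sum_{j=1}^n \langle \nabla_{x_j} W(x_1,\dots,x_n), y_j - x_j \rangle$ plus a remainder bounded by $\varepsilon |y-x|$ once $|y-x| \leq \delta_\varepsilon$ (by uniform continuity of $\nabla W$ on $K^n$) and by a uniform constant otherwise. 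Since $\int |y-x|^2 \,\mathrm{d}\gamma^{\otimes n} = n\,W_2^2(\mu,\nu)$, Cauchy--Schwarz together with Markov's inequality applied to the set $\{|y-x| > \delta_\varepsilon\}$ show that the remainder integrates to $o(W_2(\mu,\nu))$, while Fubini's theorem and the marginal conditions of $\gamma^{\otimes n}$ collapse the linear term into $\int_{\R^{2d}} \langle \xi(x), y-x \rangle \,\mathrm{d}\gamma(x,y)$ with $\xi(x) = \sum_{j=1}^n \int \nabla_{x_j} W(x_1,\dots,x_{j-1},x,x_{j+1},\dots,x_n) \,\mathrm{d}\mu^{\otimes(n-1)}$, which is the announced gradient read as a map of the single variable $x$. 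As this identity holds for every $\gamma \in \Gamma_o(\mu,\nu)$ with a remainder bounded uniformly in $\gamma$, the vector field $\xi$ lies in both $\partial^- \varphi(\mu)$ and $\partial^+ \varphi(\mu)$, hence equals $\nabla_\mu \varphi(\mu)$ in the sense of Definition \ref{def:WassersteinDiff}; it is bounded, so belongs to $L^2(\mu)$.

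The same scheme gives the gradient of $L(t,\cdot,\omega)$: writing the increment as $l(t,y,\omega(y),\int m\,\mathrm{d}\nu) - l(t,x,\omega(x),\int m\,\mathrm{d}\mu)$ and expanding to first order in its three slots --- using $\omega(y) - \omega(x) = \D_x \omega(x)(y-x) + o(|y-x|)$ uniformly on $K$ (since $\omega \in C^1$ has, by \textbf{(H1)}, uniformly continuous differential on $K$) and $\int m\,\mathrm{d}\nu - \int m\,\mathrm{d}\mu = \INTDom{\D_x m(x)(y-x)}{\R^{2d}}{\gamma} + o(W_2(\mu,\nu))$ --- one collects the three contributions, through $x$, through $\omega(x)$ and through $\int m\,\mathrm{d}\mu$, into exactly the displayed formula $\nabla_x l(\cdots) + \D_x \omega(x)^\top \nabla_v l(\cdots) + \D_x m(x)^\top \int \nabla_r l(\cdots)\,\mathrm{d}\mu$, the remainder again being $o(W_2(\mu,\nu))$ by the same tail estimate. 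Continuity of $\mu \mapsto \nabla_\mu \varphi(\mu)(\cdot)$ and of $\mu \mapsto \nabla_\mu L(t,\mu,\omega)(\cdot)$ in the $C^0$-norm on $\Pcal(K)$ follows from the uniform continuity of $\nabla W$ (resp.\ of $\nabla l$ and $\D m$) on the relevant compact sets together with narrow convergence, which on $\Pcal(K)$ is implied by $W_2$-convergence.

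Finally, regularity. A Wasserstein-differentiable functional has classical subdifferential reduced to $\{\nabla_\mu \phi(\mu)\}$ --- subtract the defining inequalities of $\partial^-$ and $\partial^+$ along displacements $(\Id + \epsilon \zeta)_{\#}\mu$ by smooth vector fields $\zeta$ --- so every element of $\Bpartial_S \phi(\mu) \subseteq \Bpartial \phi(\mu)$ has barycenter $\nabla_\mu \phi(\mu)$; conversely, any plan with first marginal $\mu$ and barycenter $\nabla_\mu \phi(\mu)$ belongs to $\Bpartial \phi(\mu)$, which one checks directly from Definition \ref{def:Subdifferentials}(ii) by picking, in the infimum over $\Gamma^{1,3}_o$, the coupling under which the velocity and target variables are conditionally independent given the source variable. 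Consequently, if $\mu_n \to \mu$ in $W_2$, $\phi(\mu_n) \to \tilde\phi$, $\Bgamma_n \in \Bpartial_S \phi(\mu_n)$ and $\Bgamma_n \to \Bgamma$ in $W_2$, then $\tilde\phi = \phi(\mu)$ by the continuity established above, while the barycenter of $\Bgamma$ is the limit of the barycenters $\nabla_\mu \phi(\mu_n)$ (Proposition \ref{prop:Estimation_Barycenter} serving to control the barycenters by the $W_2$-distance between the plans), which equals $\nabla_\mu \phi(\mu)$ by continuity of the gradient; hence $\Bgamma \in \Bpartial \phi(\mu)$, i.e.\ $\phi$ is regular. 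I expect the main obstacle to be precisely this last step --- turning $W_2$-convergence of the strong subdifferential plans into membership of the limit plan in $\Bpartial \phi(\mu)$ --- which is where the $C^1$-regularity of $W$, $l$ and $m$ is genuinely used, through the continuity of the Wasserstein gradient, and where one leans on Theorem \ref{thm:subdifferential_localslope} and Remark \ref{rmk:WassersteinDiff}, following the argument already carried out in \cite{PMPWass}.
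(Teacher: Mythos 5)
The paper does not actually present a proof of Proposition \ref{prop:Examp_Cost}: Appendix \ref{appendix:Examples} lists these examples without argument and refers the reader to the earlier work \cite{PMPWass} where they were first derived. Your proof is therefore a genuine reconstruction rather than something to compare side-by-side, and read as such it is essentially correct: the boundedness and Lipschitz estimates via the telescoping identity and Kantorovich--Rubinstein duality; the first-order expansion against $\gamma^{\otimes n}$ to identify the Wasserstein gradient of $\varphi$; the analogous three-slot expansion for $L$; and the regularity argument built on the fact that every element of $\Bpartial_S\phi(\mu)$ has barycenter $\nabla_\mu\phi(\mu)$, that this barycenter is stable under the $W_2$-limit, and that any plan with first marginal $\mu$ and barycenter $\nabla_\mu\phi(\mu)$ belongs to $\Bpartial\phi(\mu)$ through the conditional-independence coupling in $\Gamma^{1,3}_o$.

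Two small remarks on the details. First, when you ``fix $\nu\in\Pcal_2(\R^d)$'' to verify differentiability of $\varphi$, the remainder estimate you invoke (uniform continuity of $\nabla W$ on $K^n$ plus a uniform cap away from the diagonal) only holds when $\nu\in\Pcal(K)$; since $\varphi$ is neither defined nor claimed differentiable outside $\Pcal(K)$, this is harmless, but the restriction should be made explicit. Second, the appeal to Proposition \ref{prop:Estimation_Barycenter} to pass the barycenter to the limit is slightly off-target: that proposition compares two plans sharing the same first marginal, whereas here the $\Bgamma_n$ have first marginals $\mu_n\neq\mu$. The cleaner step is to test directly against smooth vector fields $\zeta$ and write $\int \langle r,\zeta(x)\rangle\,\textnormal{d}\Bgamma(x,r)=\lim_n\int\langle r,\zeta(x)\rangle\,\textnormal{d}\Bgamma_n(x,r)=\lim_n\int\langle\nabla_\mu\phi(\mu_n)(x),\zeta(x)\rangle\,\textnormal{d}\mu_n(x)=\int\langle\nabla_\mu\phi(\mu)(x),\zeta(x)\rangle\,\textnormal{d}\mu(x)$, using narrow convergence of the plans, the uniform compactness of their supports, and the $C^0$-continuity of $\mu\mapsto\nabla_\mu\phi(\mu)(\cdot)$ that you established. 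With these adjustments the argument stands and supplies the proof that the paper leaves to the reference.
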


\begin{prop}[Example of state constraints]
Let $m \in C^2(\R^d,\R^k)$ and $\lambda \in C^2([0,T] \times \R^d \times \R^k,\R^r)$. Then for any $l \in \{ 1,\dots,r\}$, the functionals
\begin{equation*}
\Lambda_l(t,\mu) \in [0,T] \times \Pcal(K) \mapsto \INTDom{\lambda_l \Big( t,x, \mathsmaller{\int} m \textnormal{d}\mu \Big)}{\R^d}{\mu(x)}
\end{equation*} 
satisfy the hypotheses \textnormal{\textbf{(H6)}} of Theorem \ref{thm:PMP_General} and their derivatives can be computed using Propositions \ref{prop:Examp_Cost}.
\end{prop}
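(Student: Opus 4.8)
The strategy is to recognise each $\Lambda_l$ as a particular instance of the cost functionals treated in Proposition \ref{prop:Examp_Cost} and then to iterate the gradient formula obtained there. Indeed, $\Lambda_l(t,\mu) = \INTDom{\lambda_l(t,x,\mathsmaller{\int} m \,\textnormal{d}\mu)}{\R^d}{\mu(x)}$ is precisely of the form $\INTDom{l(t,x,\omega(x),\mathsmaller{\int} m\,\textnormal{d}\mu)}{\R^d}{\mu(x)}$ with the integrand $l(t,x,v,r):=\lambda_l(t,x,r)$ independent of the control slot $v$ — a map that is continuous in $t$ and $C^1$ in $(x,v,r)$ since $\lambda\in C^2$. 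Hence Proposition \ref{prop:Examp_Cost} yields at once that $\Lambda_l(\cdot,\cdot)$ is bounded on $[0,T]\times\Pcal(K)$, Lipschitz in the $W_2$-metric uniformly in $t$, regular in the sense of Definition \ref{def:Regular}, and Wasserstein-differentiable at every $\mu$, the $\D_x\omega^{\top}\nabla_v l$ term dropping out because $\nabla_v l\equiv 0$, so that
\begin{equation*}
\nabla_{\mu} \Lambda_l(t,\mu)(x) = \nabla_x \lambda_l\big(t,x,\mathsmaller{\int} m \,\textnormal{d}\mu\big) + \D_x m(x)^{\top} \INTDom{\nabla_r \lambda_l\big(t,y,\mathsmaller{\int} m \,\textnormal{d}\mu\big)}{\R^d}{\mu(y)}.
\end{equation*}
Joint Lipschitz continuity in $(t,\mu)$ then follows by combining this with the bound $|\Lambda_l(t,\mu)-\Lambda_l(s,\mu)|\leq\|\partial_t\lambda_l\|_{\infty}|t-s|$, valid because $\lambda$ is $C^1$ in time and $\supp(\mu)$ together with $m(\supp(\mu))$ stay in a fixed compact set.

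For the time derivative I would differentiate under the integral sign — licit since $\lambda\in C^2$ — to get $\partial_t\Lambda_l(t,\mu) = \INTDom{\partial_t\lambda_l(t,x,\mathsmaller{\int}m\,\textnormal{d}\mu)}{\R^d}{\mu(x)}$, which is again of the form covered by Proposition \ref{prop:Examp_Cost} with $\partial_t\lambda_l$ (of class $C^1$ in $(x,r)$) in place of $\lambda_l$. Thus $\partial_t\Lambda_l(t,\cdot)$ is regular, bounded, $W_2$-Lipschitz and Wasserstein-differentiable, with gradient obtained from the displayed formula by replacing $\lambda_l$ with $\partial_t\lambda_l$. Differentiating instead the explicit expression of $\nabla_\mu\Lambda_l(t,\mu)(x)$ in $t$ and comparing, the identity $\nabla_\mu\partial_t\Lambda_l(t,\mu^*(t))(\cdot)=\partial_t\nabla_\mu\Lambda_l(t,\mu^*(t))(\cdot)$ reduces to the commutations $\partial_t\nabla_x\lambda_l=\nabla_x\partial_t\lambda_l$ and $\partial_t\nabla_r\lambda_l=\nabla_r\partial_t\lambda_l$, which hold by Schwarz's theorem for the $C^2$ map $\lambda$; continuity in $x$ — hence membership in $C^0(K,\R^d)$ after setting $\mu=\mu^*(t)$ — is immediate from continuity of $\partial_t\nabla_x\lambda_l$, $\partial_t\nabla_r\lambda_l$, $\D_x m$ and of $t\mapsto\mu^*(t)$.

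Finally I would obtain the remaining second-order objects by differentiating the explicit gradient formula in $x$ and in $\mu$ respectively. Differentiation in $x$ uses $m\in C^2$: since the moment $\mathsmaller{\int}m\,\textnormal{d}\mu$ and the last integral are independent of $x$, the field $\D_x\nabla_\mu\Lambda_l(t,\mu)(\cdot)$ is the sum of $\D_{xx}^2\lambda_l(t,\cdot,\mathsmaller{\int}m\,\textnormal{d}\mu)$ and a combination of the Hessians $\D^2 m_i(\cdot)$ weighted by the components of $\INTDom{\nabla_r\lambda_l(t,y,\mathsmaller{\int}m\,\textnormal{d}\mu)}{\R^d}{\mu(y)}$, hence continuous on $K$ since $\lambda\in C^2$ and $m\in C^2$. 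For the iterated Wasserstein gradient I note that the dependence of $x\mapsto\nabla_\mu\Lambda_l(t,\mu)(x)$ on $\mu$ enters only through the moment $\mathsmaller{\int}m\,\textnormal{d}\mu$ in the first term and through both that moment and the integrating measure in the second; each scalar component of $\nabla_\mu\Lambda_l(t,\mu)(x)$ is therefore once more a functional of the type handled by Proposition \ref{prop:Examp_Cost}, so applying that formula componentwise — together with the chain rule of Proposition \ref{prop:Chainrule} for the composition through the moment — produces a closed form for $\nabla_\mu[\nabla_\mu\Lambda_l(t,\mu^*(t))(\cdot)](\cdot)$ built from $\D^2\lambda_l$, $\D m$, $\D^2 m$ and moments of $\mu^*(t)$, continuous on $K^2$. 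I expect the main obstacle to be exactly this last bookkeeping step: one must keep separate the two distinct occurrences of $\mu$ in the second summand of $\nabla_\mu\Lambda_l$ and apply the linearity and product structure of Wasserstein gradients of moment functionals, as well as re-verify the regularity requirement of Definition \ref{def:Regular} for the auxiliary vector-valued functionals $\mu\mapsto\INTDom{\partial_t\lambda_l(t,x,\mathsmaller{\int}m\,\textnormal{d}\mu)}{\R^d}{\mu(x)}$ and $\mu\mapsto\INTDom{\nabla_r\lambda_l(t,x,\mathsmaller{\int}m\,\textnormal{d}\mu)}{\R^d}{\mu(x)}$ — a verification that, however, goes through exactly as in Proposition \ref{prop:Examp_Cost}.
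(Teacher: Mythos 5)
Your proposal is correct and takes exactly the route the paper intends: the paper itself gives no detailed proof of this proposition, stating only that the derivatives can be computed using Proposition~\ref{prop:Examp_Cost}, and your argument fills in the details of that reduction. Identifying $l(t,x,v,r)=\lambda_l(t,x,r)$ as a $v$-independent integrand, dropping the $\D_x\omega^\top\nabla_v l$ term, obtaining $\nabla_\mu\Lambda_l$ from the formula in Proposition~\ref{prop:Examp_Cost}, deriving $\partial_t\Lambda_l$ by differentiation under the integral sign, getting the commutation $\nabla_\mu\partial_t\Lambda_l=\partial_t\nabla_\mu\Lambda_l$ from Schwarz's theorem for the $C^2$ map $\lambda$, and then obtaining $\D_x\nabla_\mu\Lambda_l$ and the iterated Wasserstein gradient $\nabla_\mu[\nabla_\mu\Lambda_l(t,\mu)(\cdot)](\cdot)$ by applying Proposition~\ref{prop:Examp_Cost} componentwise to $\nabla_r^j\lambda_l$ (again $C^1$ since $\lambda\in C^2$) and using $m\in C^2$ for the Hessian contributions, is precisely the intended bookkeeping, and the required $C^0(K,\R^d)$ and $C^0(K^2,\R^d)$ memberships then follow from compactness of $K$ and the stated $C^2$ regularity. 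The one point you should make fully explicit is the uniformity in $t$ of the $W_2$-Lipschitz constant (which (H6), unlike (H5), requires jointly over $[0,T]\times\Pcal(K)$); this holds because $\lambda$, $\nabla_x\lambda$, $\nabla_r\lambda$ and their $t$-derivatives are uniformly bounded and Lipschitz on the compact set $[0,T]\times K\times m(K)$, and the bound $|\Lambda_l(t,\mu)-\Lambda_l(s,\mu)|\leq\|\partial_t\lambda_l\|_{C^0}|t-s|$ you invoke then combines with the $t$-uniform $W_2$-Lipschitz estimate exactly as you indicate.
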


\begin{rmk}
Particular cases of functionals which are of great interest for applications are for instance the variance functional $\mu \mapsto \tfrac{1}{2} \INTDom{|x-\bar{\mu}|^2}{\R^d}{\mu(x)}$ where $\bar{\mu} = \int y \, \textnormal{d} \mu(y)$ or the target-support map to a closed set $S \subset \R^d$ $\mu \mapsto \tfrac{1}{2}\INTDom{d_S(x)^2}{\R^d}{\mu(x)}$.
\end{rmk}


\section{Wasserstein differential of the running constraint penalization}
\label{appendix:ConstraintPenalization}

In this Section, we give the analytical expression of the Wasserstein differential of the running constraint penalization map $(t,\mu,\zeta,\omega) \mapsto
 \Ccal(t,\mu,\zeta,\omega)$ defined in \eqref{eq:Penalized_Constraints}.

\begin{prop}[Wasserstein differential of the state constraints penalization map]
\label{prop:WassersteinDiff_StateConstraints}
Let $K \subset \R^d$ be a compact set and $\omega \in U$. Under hypotheses \textnormal{\textbf{(H6)}}, the map
\begin{equation*}
\begin{aligned}
\mu \in \Pcal(K) \mapsto \Ccal(t,\mu,\zeta^*(t),\omega) & = \sum_{l=1}^r \zeta_l^*(t) \left( \partial_t \Lambda_l(t,\mu) + \INTDom{ \langle \nabla_{\mu} \Lambda_l(t,\mu)(x) , v[\mu](t,x) + \omega(x) \rangle}{\R^d}{\mu(x)} \right) \\
& = \sum_{l=1}^r \zeta_l^*(t) \Ccal_l(t,\mu,\omega)
\end{aligned}
\end{equation*}
is regular and Wasserstein-differentiable at at any $\mu \in \Pcal(K)$. The Wasserstein gradients of its summands $\Ccal_l(t,\cdot,\omega)$ can be computed explicitly as
\begin{equation}
\label{eq:WassersteinGradient_StateConstraint}
\begin{aligned}
\nabla_{\mu} \Ccal_l(t,\mu,\omega)(x) & = \partial_t \nabla_{\mu} \Lambda_l(t,\mu)(x) + \D_x \nabla_{\mu} \Lambda_l(t,\mu)(x)^{\top} \big( v[\mu](t,x) + \omega(x) \big) \\
& + \left( \D_x v[\mu](t,x)^{\top} + \D_x u^*(t,x)^{\top} \right) \nabla_{\mu} \Lambda_l(t,\mu)(x) + \INTDom{\BGamma^v_{(t,y)}(x)^{\top} \nabla_{\mu}\Lambda_l(t,\mu)(y)}{\R^d}{\mu(y)} \\
& + \INTDom{\BGamma^{\nabla_{\mu} \Lambda_l}_{(t,y)}(x)^{\top} \big(  v[\mu](t,y) + u^*(t,y) \big)}{\R^d}{\mu(y)}
\end{aligned}
\end{equation}
where $(t,x,y) \mapsto \BGamma^{\nabla_{\mu} \Lambda_l}_{(t,y)}(x) = \left(\BGamma^{\nabla_{\mu} \Lambda_l,i}_{(t,y)}(x) \right)_{1 \leq i \leq d}$ are the matrix-valued maps which rows are the Wasserstein gradients of the components of $\nabla_{\mu} \Lambda_l^i(t,\mu)(x)$, i.e.
\begin{equation*}
\BGamma^{\nabla_{\mu} \Lambda_l,i}_{(t,y)}(x) = \nabla_{\mu} \Big[ \nabla_{\mu} \Lambda_l^i(t,\cdot)(y) \Big](\mu)(x).
\end{equation*}
\end{prop}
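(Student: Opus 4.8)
The plan is to read off the Wasserstein gradient of $\Ccal_l(t,\cdot,\omega)$ directly from the defining relation \eqref{eq:WassersteinDiff}, by evaluating the functional along the one‑parameter family $\mu_\epsilon = (\Id+\epsilon\xi)_{\#}\mu$ for an arbitrary $\xi\in C^\infty_c(\R^d,\R^d)$ and differentiating at $\epsilon=0$. Such a perturbation is the particular instance $N=1$, $\G(e,x)=x+e\,\xi(x)$ of the admissible families covered by the chainrule of Proposition \ref{prop:Chainrule}, so for each constituent functional appearing in $\Ccal_l$ — namely $\mu\mapsto\partial_t\Lambda_l(t,\mu)$, the scalar components $\mu\mapsto\nabla_\mu\Lambda_l^i(t,\mu)(y)$, and $\mu\mapsto v^i[\mu](t,x)$ — the derivative $\partial_\epsilon|_{\epsilon=0}$ equals the integral of the corresponding Wasserstein gradient against $\xi$. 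Since by \textbf{(H3)} and \textbf{(H6)} all these maps are $W_2$‑Lipschitz, regular and Wasserstein‑differentiable, the functional $\Ccal_l(t,\cdot,\omega)$ is $W_2$‑Lipschitz over $\Pcal(K)$ as a composition of Lipschitz and bounded ingredients with smooth algebraic operations, so that every remainder produced by the expansion is genuinely $o(W_2(\mu,\mu_\epsilon))=o(\epsilon)$; it then suffices to identify $\partial_\epsilon|_{0}\Ccal_l(t,\mu_\epsilon,\omega)=\int\langle\nabla_\mu\Ccal_l(t,\mu,\omega)(x),\xi(x)\rangle\,\textnormal{d}\mu(x)$ for every $\xi$. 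The regularity of $\Ccal_l(t,\cdot,\omega)$ itself will be obtained from the stability of the class of regular, Wasserstein‑differentiable functionals under sums, products with continuous functions, and composition with the smooth operations present in its definition, exactly as for the model functionals of Appendix \ref{appendix:Examples}.

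Concretely, I would split $\Ccal_l(t,\mu,\omega)=\partial_t\Lambda_l(t,\mu)+\Phi(\mu)$ with $\Phi(\mu)=\int\langle\nabla_\mu\Lambda_l(t,\mu)(x),v[\mu](t,x)+\omega(x)\rangle\,\textnormal{d}\mu(x)$. The first summand contributes $\nabla_\mu\partial_t\Lambda_l(t,\mu)(\cdot)=\partial_t\nabla_\mu\Lambda_l(t,\mu)(\cdot)$ by the commutation assumption in \textbf{(H6)}. For the second, after the change of variables $\textnormal{d}\mu_\epsilon=(\Id+\epsilon\xi)_{\#}\textnormal{d}\mu$ one writes $\Phi(\mu_\epsilon)=\int\langle G_l(\mu_\epsilon,x+\epsilon\xi(x)),V(\mu_\epsilon,x+\epsilon\xi(x))\rangle\,\textnormal{d}\mu(x)$ with $G_l(\mu,x)=\nabla_\mu\Lambda_l(t,\mu)(x)$ and $V(\mu,x)=v[\mu](t,x)+\omega(x)$, and differentiates at $\epsilon=0$. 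This produces four groups of terms: (a) the spatial derivative of $G_l$, giving $\D_x\nabla_\mu\Lambda_l(t,\mu)(x)^{\top}\bigl(v[\mu](t,x)+\omega(x)\bigr)$; (b) the spatial derivative of $V$, giving $\bigl(\D_x v[\mu](t,x)^{\top}+\D_x\omega(x)^{\top}\bigr)\nabla_\mu\Lambda_l(t,\mu)(x)$; (c) the measure‑derivative of $G_l$, which by Proposition \ref{prop:Chainrule} applied componentwise to $\mu\mapsto\nabla_\mu\Lambda_l^i(t,\mu)(y)$ brings in the second‑order gradients $\BGamma^{\nabla_\mu\Lambda_l,i}_{(t,y)}(\cdot)=\nabla_\mu[\nabla_\mu\Lambda_l^i(t,\cdot)(y)](\mu)(\cdot)$ from \textbf{(H6)}; (d) the measure‑derivative of $V$, which by Proposition \ref{prop:Chainrule} applied to $\mu\mapsto v^i[\mu](t,x)$ brings in $\BGamma^v_{(t,x)}$ as in \eqref{eq:BGamma_Def} under \textbf{(H3)}.

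The double integrals produced in (c) and (d) are then rearranged by Fubini's theorem — legitimate since all integrands are continuous and bounded on the compact set $K$ — to move the $\xi$‑dependence into the free slot and to relabel variables, yielding respectively $\int\BGamma^{\nabla_\mu\Lambda_l}_{(t,y)}(x)^{\top}\bigl(v[\mu](t,y)+\omega(y)\bigr)\,\textnormal{d}\mu(y)$ and $\int\BGamma^v_{(t,y)}(x)^{\top}\nabla_\mu\Lambda_l(t,\mu)(y)\,\textnormal{d}\mu(y)$. Collecting the five contributions, one reads off a candidate vector field which coincides with the right‑hand side of \eqref{eq:WassersteinGradient_StateConstraint}. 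This field is continuous on $K$ by the continuity clauses of \textbf{(H3)} and \textbf{(H6)}, and since the identity $\partial_\epsilon|_{0}\Ccal_l(t,\mu_\epsilon,\omega)=\int\langle(\,\cdots),\xi\rangle\,\textnormal{d}\mu$ holds for all $\xi$ and symmetrically for $-\xi$, it belongs to $\partial^-\Ccal_l(t,\mu,\omega)\cap\partial^+\Ccal_l(t,\mu,\omega)$ and is therefore, by Definition \ref{def:WassersteinDiff}, the Wasserstein gradient.

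The main obstacle is the bookkeeping of the two non‑local terms (c) and (d): one must differentiate $\mu\mapsto\nabla_\mu\Lambda_l^i(t,\mu)(y)$ and $\mu\mapsto v^i[\mu](t,x)$ with the point $y$ (resp. $x$) and the component $i$ frozen, then check that the resulting $o(\epsilon)$ remainders are uniform with respect to the frozen variable over $K$ — this is precisely where the uniformity clauses of \textbf{(H3)} and \textbf{(H6)} enter, so that integration against $\mu$ commutes with the $\epsilon$‑limit — and finally carry out the Fubini interchange together with the transpositions with the correct index placement, so that the rows of $\BGamma^{\nabla_\mu\Lambda_l}$ and $\BGamma^v$ are contracted against the right vectors. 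A secondary point to state carefully is that $\Phi$ is genuinely Lipschitz in the $W_2$‑metric, not merely Gâteaux‑differentiable along smooth perturbations, which again follows from \textbf{(H3)}–\textbf{(H6)} but is needed to legitimize the $o(W_2)$ remainder in Definition \ref{def:WassersteinDiff}.
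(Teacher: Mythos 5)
Your overall strategy — test along $\mu_\epsilon=(\Id+\epsilon\xi)_{\#}\mu$, compute $\partial_\epsilon|_{0}$ via the chainrule applied to each ingredient ($\partial_t\Lambda_l$, $\nabla_\mu\Lambda_l^i(\cdot)(y)$, $v^i[\cdot](t,x)$), rearrange with Fubini, and read off a candidate — produces the correct formula, and the five groups of terms you list match the right-hand side of \eqref{eq:WassersteinGradient_StateConstraint} term for term. However, there is a genuine gap in the final step, and it is precisely the one you half-acknowledge as a ``secondary point'': you only establish that $\partial_\epsilon|_{0}\Ccal_l(t,(\Id+\epsilon\xi)_{\#}\mu,\omega)=\int\langle(\cdots),\xi\rangle\,\textnormal{d}\mu$ for smooth $\xi$, i.e.\ a Gâteaux derivative along a special class of perturbations. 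Definition \ref{def:WassersteinDiff} requires the Taylor expansion with an $o(W_2(\mu,\nu))$ remainder to hold for \emph{every} $\nu\in\Pcal_2(\R^d)$ and \emph{every} $\gamma\in\Gamma_o(\mu,\nu)$, and a general $\nu$ close to $\mu$ in $W_2$ need not be a pushforward of $\mu$ by a near-identity diffeomorphism (think of $\mu$ a Dirac mass and $\nu$ a small diffuse cloud). $W_2$-Lipschitzness of $\Ccal_l(t,\cdot,\omega)$ bounds the metric slope and hence guarantees nonemptiness of the extended subdifferential via Theorem \ref{thm:subdifferential_localslope}, but it does not by itself promote your smooth-perturbation derivative to membership in $\partial^-\cap\partial^+$; there is no density argument given to pass from the directional identities to the inequality against arbitrary optimal plans.

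The paper's proof avoids this issue by working from the definition directly: it takes arbitrary $\nu\in\Pcal(K)$ and $\gamma\in\Gamma_o(\mu,\nu)$, performs the two-step expansion in \eqref{eq:StateConstraint_Derivatives2}--\eqref{eq:StateConstraint_Derivatives3} (first a spatial Taylor expansion of the integrand with the transport variable $y$ replacing $x$, then a measure-argument expansion using the Wasserstein differentials of $\nabla_\mu\Lambda_l$ and $v$ from \textnormal{\textbf{(H3)}}, \textnormal{\textbf{(H6)}}), and — this is the technical crux you omit — controls the pointwise $o(|x-y|)$ remainder by a Chebyshev argument on the compact $K$ to show $\int o(|x-y|)\,\textnormal{d}\gamma=o(W_2(\mu,\nu))$. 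That estimate is exactly what turns a pointwise first-order expansion into an $o(W_2)$ remainder valid against arbitrary optimal couplings, and without it (or an equivalent density/regularity argument) your proof does not close. To repair your approach you would need either to reproduce that expansion against arbitrary $(\nu,\gamma)$, or to invoke a theorem stating that for a $W_2$-Lipschitz, regular functional whose directional derivative along all smooth pushforward perturbations is represented by a continuous vector field, that vector field is the Wasserstein gradient — but no such theorem appears in the paper, and it would itself require the $o(|x-y|)\to o(W_2)$ control.
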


\begin{proof}
In order to lighten the coming computations, we introduce the auxiliary functions
\begin{equation*}
\Ccal_l^1(t,\mu) = \partial_t \Lambda_l(t,\mu) \qquad \text{and} \qquad s\Ccal_l^2(t,\mu,\omega) = \INTDom{\langle \nabla_{\mu}\Lambda_l(t,\mu)(x) , v[\mu](t,x) + \omega(x) \rangle}{\R^d}{\mu(x)}.
\end{equation*}
Let $t \in [0,T]$ and $\mu \in \Pcal(K)$. The Wasserstein gradient of $\Ccal_l^1(t,\cdot)$ at $\mu$ is given by
\begin{equation}
\label{eq:StateConstraint_Derivatives1}
\nabla_{\mu} \Ccal_l^1(t,\mu) = \nabla{\mu} \left( \partial_t \Lambda_l(t,\cdot) \right)(\mu) = \partial_t \nabla_{\mu} \Lambda_l(t,\mu)
\end{equation}
We turn our attention to $\Ccal_l^2(t,\cdot,\omega)$. For any $\nu \in \Pcal(K)$ and $\gamma \in \Gamma_o(\mu,\nu)$, it holds that
\begin{equation}
\label{eq:StateConstraint_Derivatives2}
\begin{aligned}
& \hspace{0.4cm} \Ccal_l^2(t,\nu,\omega) - \Ccal^2_l(t,\mu,\omega) \\
& = \INTDom{\Big( \langle \nabla_{\mu} \Lambda_l(t,\nu)(y) , v[\nu](t,y) + u^*(t,y) \rangle -  \langle \nabla_{\mu} \Lambda_l(t,\mu)(x) , v[\mu](t,x) + \omega(x) \rangle \Big)}{\R^{2d}}{\gamma(x,y)} \\
& = \INTDom{\Big( \langle \nabla_{\mu} \Lambda_l(t,\nu)(x) , v[\nu](t,x) + \omega(x) \rangle - \langle \nabla_{\mu} \Lambda_l(t,\mu)(x) , v[\mu](t,x) + \omega(x) \rangle\Big)}{\R^d}{\mu(x)} \\
& + \INTDom{\left\langle \D_x \nabla_{\mu} \Lambda_l(t,\mu)(x)^{\top} \left( v[\mu](x) + \omega(x) \right) , y - x \right\rangle}{\R^{2d}}{\gamma(x,y)} \\
& + \INTDom{\left\langle \left( \D_x v[\mu](t,x)^{\top} + \D_x \omega(x)^{\top} \right) \nabla_{\mu} \Lambda_l(t,\mu)(x) , y - x \right\rangle}{\R^{2d}}{\gamma(x,y)} + \INTDom{o(|x-y|)}{\R^{2d}}{\gamma(x,y)} 
\end{aligned}
\end{equation}
By definition of the Landau notation $o(\cdot)$, for any $\epsilon > 0$ there exists $\eta > 0$ such that whenever $|x-y| \leq \eta$, one has that $o(|x-y|) \leq \epsilon |x-y|$. Therefore,
\begin{equation*}
\begin{aligned}
\INTDom{o(|x-y|)}{\R^{2d}}{\gamma(x,y)} & \leq \epsilon \INTDom{|x-y|}{\{ |x-y| \leq \eta\}}{\gamma(x,y)} + C \INTDom{|x-y|}{\{ |x-y| > \eta\}}{\gamma(x,y)} \\
& \leq \epsilon W_2(\mu,\nu) + 2C \text{diam}(K) \, \gamma \left( \big\{ (x,y) \in \R^{2d} ~\text{s.t.}~  |x-y| > \eta \big\} \right) \\
& \leq \epsilon W_2(\mu,\nu) + \frac{2C}{\eta^2} \text{diam}(K) W_2^2(\mu,\nu)
\end{aligned}
\end{equation*}
by Chebyshev's inequality and where the constant $C > 0$ exists because $o(|x-y|)$ is in particular a $O(|x-y|)$ on the compact set $\supp(\gamma) \subset \R^{2d}$ for $|x-y| > \eta$. Upon choosing $\eta' = \eta^2 \epsilon/(2C \text{diam}(K))$, we recover that
\begin{equation*}
\INTDom{o(|x-y|)}{\R^{2d}}{\gamma(x,y)} \leq 2 \epsilon W_2(\mu,\nu).
\end{equation*}
whenever $W_2(\mu,\nu) \leq \eta'$. By definition, this estimate precisely amounts to the fact that $\INTDom{o(|x-y|)}{\R^{2d}}{\gamma(x,y)} = o(W_2(\mu,\nu))$ as $W_2(\mu,\nu) \rightarrow 0$.

We further compute the first-order variations arising from the remaining measure terms as follows  
\begin{equation}
\label{eq:StateConstraint_Derivatives3}
\begin{aligned}
& \INTDom{\Big( \langle \nabla_{\mu} \Lambda_l(t,\nu)(x) , v[\nu](t,x) + \omega(x) \rangle - \langle \nabla_{\mu} \Lambda_l(t,\mu)(x) , v[\mu](t,x) + \omega(x) \rangle\Big)}{\R^d}{\mu(x)} \\
= ~ & \INTDom{\left\langle \INTDom{\BGamma^{\nabla_{\mu}\Lambda_l}_{(t,x)}(x')(y'-x')}{\R^{2d}}{\gamma'(x',y')} , v[\mu](t,x) + \omega(x) \right\rangle}{\R^d}{\mu(x)} \\
+ ~ & \INTDom{\left\langle \INTDom{\BGamma^v_{(t,x)}(x')(y'-x')}{\R^{2d}}{\gamma'(x',y')} , \nabla_{\mu} \Lambda_l(t,\mu)(x) \right\rangle}{\R^d}{\mu(x)}   + o(W_2(\mu,\nu)) \\
= ~ & \INTDom{\left\langle \INTDom{\Big( \BGamma^v_{(t,y')}(x)^{\top} \nabla_{\mu}\Lambda_l(t,\mu)(y') + \BGamma^{\nabla_{\mu}\Lambda_l}_{(t,y')}(x)^{\top} \nabla_{\mu}\Lambda_l(t,\mu)(y') \Big)}{\R^d}{\mu(y')} , y - x \right\rangle}{\R^{2d}}{\gamma(x,y)} + o(W_2(\mu,\nu))
\end{aligned}
\end{equation}
by a standard application of Fubini's Theorem. Merging equations \eqref{eq:StateConstraint_Derivatives1}, \eqref{eq:StateConstraint_Derivatives2} and \eqref{eq:StateConstraint_Derivatives3}, we recover the characterization \eqref{eq:WassersteinDiff} of the Wasserstein gradient $\nabla_{\mu} \Ccal_l(t,\mu,\omega)(\cdot)$ of $\Ccal_l(t,\cdot,\omega)$ at $\mu$ given by \eqref{eq:WassersteinGradient_StateConstraint}.
\end{proof}


\bibliographystyle{plain}
{\footnotesize
\bibliography{../../ControlWassersteinBib}}

\end{document}